\def\definetac{\newif\iftac}    
\else\usepackage{amsthm}\fi
\definecolor{darkgreen}{rgb}{0,0.45,0} 
\let\ea\expandafter
\def\mdef#1#2{\ea\ea\ea\gdef\ea\ea\noexpand#1\ea{\ea\ensuremath\ea{#2}\xspace}}
\def\alwaysmath#1{\ea\ea\ea\global\ea\ea\ea\let\ea\ea\csname your@#1\endcsname\csname #1\endcsname
  \ea\def\csname #1\endcsname{\ensuremath{\csname your@#1\endcsname}\xspace}}
\DeclareRobustCommand\widecheck[1]{{\mathpalette\@widecheck{#1}}}
\def\@widecheck#1#2{%
    \setbox\z@\hbox{\m@th$#1#2$}%
    \setbox\tw@\hbox{\m@th$#1%
       \widehat{%
          \vrule\@width\z@\@height\ht\z@
          \vrule\@height\z@\@width\wd\z@}$}%
    \dp\tw@-\ht\z@
    \@tempdima\ht\z@ \advance\@tempdima2\ht\tw@ \divide\@tempdima\thr@@
    \setbox\tw@\hbox{%
       \raise\@tempdima\hbox{\scalebox{1}[-1]{\lower\@tempdima\box
\tw@}}}%
    {\ooalign{\box\tw@ \cr \box\z@}}}
\def\foreachletter#1#2#3{\foreachcount=#1
  \ea\loop\ea\ea\ea#3\@alph\foreachcount
  \advance\foreachcount by 1
  \ifnum\foreachcount<#2\repeat}
\def\foreachLetter#1#2#3{\foreachcount=#1
  \ea\loop\ea\ea\ea#3\@Alph\foreachcount
  \advance\foreachcount by 1
  \ifnum\foreachcount<#2\repeat}
\def\definescr#1{\ea\gdef\csname s#1\endcsname{\ensuremath{\mathscr{#1}}\xspace}}
\def\definecal#1{\ea\gdef\csname c#1\endcsname{\ensuremath{\mathcal{#1}}\xspace}}
\def\definebold#1{\ea\gdef\csname b#1\endcsname{\ensuremath{\mathbf{#1}}\xspace}}
\def\definebb#1{\ea\gdef\csname l#1\endcsname{\ensuremath{\mathbb{#1}}\xspace}}
\def\definefrak#1{\ea\gdef\csname f#1\endcsname{\ensuremath{\mathfrak{#1}}\xspace}}
\def\definebar#1{\ea\gdef\csname #1bar\endcsname{\ensuremath{\overline{#1}}\xspace}}
\def\definetil#1{\ea\gdef\csname #1til\endcsname{\ensuremath{\widetilde{#1}}\xspace}}
\def\definehat#1{\ea\gdef\csname #1hat\endcsname{\ensuremath{\widehat{#1}}\xspace}}
\def\definechk#1{\ea\gdef\csname #1chk\endcsname{\ensuremath{\widecheck{#1}}\xspace}}
\def\defineul#1{\ea\gdef\csname u#1\endcsname{\ensuremath{\underline{#1}}\xspace}}
\def\autofmt@n#1\autofmt@end{\mathrm{#1}}
\def\autofmt@b#1\autofmt@end{\mathbf{#1}}
\def\autofmt@l#1#2\autofmt@end{\mathbb{#1}\mathsf{#2}}
\def\autofmt@c#1#2\autofmt@end{\mathcal{#1}\mathit{#2}}
\def\autofmt@s#1#2\autofmt@end{\mathscr{#1}\mathit{#2}}
\def\autofmt@f#1\autofmt@end{\mathsf{#1}}
\def\autofmt@u#1\autofmt@end{\underline{\smash{\mathsf{#1}}}}
\def\autofmt@U#1\autofmt@end{\underline{\underline{\smash{\mathsf{#1}}}}}
\def\autofmt@h#1\autofmt@end{\widehat{#1}}
\def\autofmt@r#1\autofmt@end{\overline{#1}}
\def\autofmt@t#1\autofmt@end{\widetilde{#1}}
\def\autofmt@k#1\autofmt@end{\check{#1}}
\def\auto@drop#1{}
\def\autodef#1{\ea\ea\ea\@autodef\ea\ea\ea#1\ea\auto@drop\string#1\autodef@end}
\def\@autodef#1#2#3\autodef@end{%
  \ea\def\ea#1\ea{\ea\ensuremath\ea{\csname autofmt@#2\endcsname#3\autofmt@end}\xspace}}
\def\autodefs@end{blarg!}
\def\autodefs#1{\@autodefs#1\autodefs@end}
\def\@autodefs#1{\ifx#1\autodefs@end%
  \def\autodefs@next{}%
  \else%
  \def\autodefs@next{\autodef#1\@autodefs}%
  \fi\autodefs@next}
\DeclareSymbolFont{bbold}{U}{bbold}{m}{n}
\DeclareSymbolFontAlphabet{\mathbbb}{bbold}
\newcommand{\bbone}{\ensuremath{\mathbbb{1}}\xspace}
\mdef\delbar{\overline{\partial}}
\mdef\hf{\textstyle\frac12 }
\mdef\thrd{\textstyle\frac13 }
\mdef\qtr{\textstyle\frac14 }
\newcommand{\op}{^{\mathrm{op}}}
\let\adj\dashv
\mdef\Id{\mathrm{Id}}
\mdef\id{\mathrm{id}}
\def\frc#1/#2.{\frac{#1}{#2}}   
\mdef\ten{\mathrel{\otimes}}
\mdef\sqten{\mathrel{\boxtimes}}
\DeclareRobustCommand\widecheck[1]{{\mathpalette\@widecheck{#1}}}
\def\@widecheck#1#2{%
    \setbox\z@\hbox{\m@th$#1#2$}%
    \setbox\tw@\hbox{\m@th$#1%
       \widehat{%
          \vrule\@width\z@\@height\ht\z@
          \vrule\@height\z@\@width\wd\z@}$}%
    \dp\tw@-\ht\z@
    \@tempdima\ht\z@ \advance\@tempdima2\ht\tw@ \divide\@tempdima\thr@@
    \setbox\tw@\hbox{%
       \raise\@tempdima\hbox{\scalebox{1}[-1]{\lower\@tempdima\box
\tw@}}}%
    {\ooalign{\box\tw@ \cr \box\z@}}}
\DeclareMathOperator\colim{colim}
\DeclareMathOperator\Ho{Ho}
\DeclareMathOperator\Map{Map}
\mdef\we{\overset{\sim}{\longrightarrow}}
\mdef\leftwe{\overset{\sim}{\longleftarrow}}
\def\rightarrowtailfill@{\arrowfill@{\Yright\joinrel\relbar}\relbar\rightarrow}
\newcommand\xrightarrowtail[2][]{\ext@arrow 0055{\rightarrowtailfill@}{#1}{#2}}
\def\twoheadrightarrowfill@{\arrowfill@{\relbar\joinrel\relbar}\relbar\twoheadrightarrow}
\newcommand\xtwoheadrightarrow[2][]{\ext@arrow 0055{\twoheadrightarrowfill@}{#1}{#2}}
\def\slashedarrowfill@#1#2#3#4#5{%
  $\m@th\thickmuskip0mu\medmuskip\thickmuskip\thinmuskip\thickmuskip
   \relax#5#1\mkern-7mu%
   \cleaders\hbox{$#5\mkern-2mu#2\mkern-2mu$}\hfill
   \mathclap{#3}\mathclap{#2}%
   \cleaders\hbox{$#5\mkern-2mu#2\mkern-2mu$}\hfill
   \mkern-7mu#4$%
}
\def\rightslashedarrowfill@{%
  \slashedarrowfill@\relbar\relbar\mapstochar\rightarrow}
\newcommand\xslashedrightarrow[2][]{%
  \ext@arrow 0055{\rightslashedarrowfill@}{#1}{#2}}
\mdef\hto{\xslashedrightarrow{}}
\mdef\htoo{\xslashedrightarrow{\quad}}
\long\def\my@drawfill#1#2;{%
\@skipfalse
\fill[#1,draw=none] #2;
\@skiptrue
\draw[#1,fill=none] #2;
}
\newif\if@skip
\newcommand{\skipit}[1]{\if@skip\else#1\fi}
\newcommand{\drawfill}[1][]{\my@drawfill{#1}}
\newif\ifhyperref
  \let\your@state\state
  \def\state#1{\gdef\currthmtype{#1}\your@state{#1}}
  \let\your@staterm\staterm
  \def\staterm#1{\gdef\currthmtype{#1}\your@staterm{#1}}
  \let\defthm\newtheorem
  \def\currthmtype{}
    \def\autoref#1{\ref*{label@name@#1}~\ref{#1}}
    \def\autoref#1{\ref{label@name@#1}~\ref{#1}}
    \let\old@label\label%
    \def\label#1{%
      {\let\your@currentlabel\@currentlabel%
        \edef\@currentlabel{\currthmtype}%
        \old@label{label@name@#1}}%
      \old@label{#1}}
    \def\defthm#1#2{%
      \newtheorem{#1}{#2}[section]%
      \expandafter\def\csname #1autorefname\endcsname{#2}%
      \expandafter\let\csname c@#1\endcsname\c@thm}
    \def\defthm#1#2{\newtheorem{#1}[thm]{#2}}
\let\SK@label\label\fi
    \let\old@label\label
    \let\your@thm\@thm
    \def\@thm#1#2#3{\gdef\currthmtype{#3}\your@thm{#1}{#2}{#3}}
    \def\currthmtype{}
    \def\label#1{{\let\your@currentlabel\@currentlabel\def\@currentlabel%
        {\currthmtype~\your@currentlabel}%
        \SK@label{#1@}}\old@label{#1}}
    \def\autoref#1{\ref{#1@}}
\newtheorem{thm}{Theorem}[section]
\iftac\theoremstyle{plain}\else\theoremstyle{definition}\fi
\iftac\theoremstyle{plain}\else\theoremstyle{remark}\fi
\def\thmqedhere{\expandafter\csname\csname @currenvir\endcsname @qed\endcsname}
  \let\c@equation\c@subsection
  \let\c@equation\c@thm
\numberwithin{equation}{section}
\mdef\ep{\varepsilon}
\mdef\ph{\varphi}
\let\Th\Theta
\tikzset{lab/.style={auto,font=\scriptsize}} 
\let\oldboxtimes\boxtimes
\def\boxtimes{\mathrel{\oldboxtimes}}
\def\ccsub{_{\mathrm{cc}}}
\def\pdh(#1,#2){\llbracket #1,#2\rrbracket}
\def\ldh(#1,#2){\llbracket #1,#2\rrbracket\ccsub}
\def\pend(#1){\pdh(#1,#1)}
\def\lend(#1){\ldh(#1,#1)}
\def\DTl#1#2#3#4#5#6#7{%
  \xymatrix@C=3pc{{#1} \ar[r]^-{#2} &
    {#3} \ar[r]^-{#4} &
    {#5} \ar[r]^-{#6} &
    {#7}
  }}
\newsavebox{\tvabox}
\savebox\tvabox{\hspace{1mm}\begin{tikzpicture}[>=latex',baseline={(0,-.18)}]
  \draw[->] (0,.1) -- +(1,0);
  \node at (.5,0) {$\scriptscriptstyle\bot$};
  \draw[->] (1,-.1) -- +(-1,0);
  \draw[->] (1,-.2) -- +(-1,0);
\end{tikzpicture}\hspace{1mm}}
\renewcommand{\Set}{\mathrm{Set}}
\newcommand{\Ab}{\mathrm{Ab}}
\newcommand{\Alg} {\mathrm{Alg}}
\newcommand{\Alggp} {\Grp}
\newcommand{\Mon} {\mathrm{Mon}_{\Eoo}\!}
\newcommand{\Grp} {\mathrm{Grp}_{\Eoo}\!}
\newcommand{\Mono} {\mathrm{Mon}_{\Eoo}^\otimes\!}
\newcommand{\Grpo} {\mathrm{Grp}_{\Eoo}^\otimes\!}
\newcommand{\Mod} {\mathrm{Mod}}
\newcommand{\coAlg} {\mathrm{coMon}_{\Eoo}\!}
\newcommand{\coGrp} {\mathrm{coGrp}_{\Eoo}\!}
\newcommand{\calc} {\mathcal C}
\newcommand{\cald} {\mathcal D}
\newcommand{\cale} {\mathcal E}
\newcommand{\calf} {\mathcal F}
\newcommand{\cals} {\mathcal{S}}
\newcommand{\Fun} {\mathrm{Fun}}
\newcommand{\NFin} {\mathrm{N}(\mathcal{F}\mathrm{in}_*)}
\newcommand{\Rig} {\mathcal{R}\mathrm{ig}}
\newcommand{\Ring} {\mathcal{R}\mathrm{ing}}
\newcommand{\Prl}   {\mathcal{P}\mathrm{r^L}}
\newcommand{\Prr}   {\mathcal{P}\mathrm{r^R}}
\newcommand{\Prlpt}   {\mathcal{P}\mathrm{r^L_{Pt}}}
\newcommand{\Prlpre}   {\mathcal{P}\mathrm{r^L_{Pre}}}
\newcommand{\Prladd}   {\mathcal{P}\mathrm{r^L_{Add}}}
\newcommand{\Prlst}   {\mathcal{P}\mathrm{r^L_{St}}}
\newcommand{\Prlo}   {\mathcal{P}\mathrm{r^{L,\otimes}}}
\newcommand{\Prlpto}   {\mathcal{P}\mathrm{r^{L,\otimes}_{Pt}}}
\newcommand{\Prlpreo}   {\mathcal{P}\mathrm{r^{L,\otimes}_{Pre}}}
\newcommand{\Prladdo}   {\mathcal{P}\mathrm{r^{L,\otimes}_{Add}}}
\newcommand{\Prlsto}   {\mathcal{P}\mathrm{r^{L,\otimes}_{St}}}
\newcommand{\FunR}{\Fun^\mathrm{R}}
\newcommand{\FunRAd}{\Fun^\mathrm{RAd}}
\newcommand{\FunL}{\Fun^\mathrm{L}}
\newcommand{\FunLL}{\Fun^\mathrm{L,L}}
\newcommand{\FunLMon}{\Fun^{\mathrm{L},\otimes}}
\newcommand{\Eoo}{\mathbb{E}_\infty}
\newcommand{\Catoo}{\mathcal{C}\mathrm{at}_\infty}
\newcommand{\Cat}{\mathcal{C}\mathrm{at}}
\newcommand{\Sp}{\mathrm{Sp}}
\renewcommand{\Th}{\mathcal{T}\mathrm{h}}
\newcommand{\n}{\langle n\rangle}
\newcommand{\one}{\langle 1\rangle}
\renewcommand{\adj}{\rightleftarrows}
\newcommand{\SymMonCat}{\mathcal{S}\mathrm{ym}\mathcal{M}\mathrm{on}\mathcal{C}\mathrm{at}}
\newcommand{\SymMonCatoo}{\mathcal{S}\mathrm{ym}\mathcal{M}\mathrm{on}\mathcal{C}\mathrm{at}_\infty}
\newcommand{\calo}{\mathcal{O}}
\newcommand{\calp}{\mathcal{P}}
\newcommand{\bbE}{\mathbb{E}}
\newcommand{\RingSp}{\Ring\Sp}
\newcommand{\RigoCat}{\Rig_\calo\Cat}
\newcommand{\RingoCat}{\Ring_\calo\Cat}
\newcommand{\bbT}{\mathbb{T}}
\renewcommand{\id}{\mathrm{id}}
\renewcommand{\Ho}{\mathrm{Ho}}
\newcommand{\N}{\mathrm{N}}
\newcommand{\Fr}{\mathrm{Fr}}
\newcommand{\Proj}{\mathrm{Proj}}
\newcommand{\K}{\mathrm{K}}
\newcommand{\calm}{\mathcal{M}}
\newcommand{\ev}{\mathrm{ev}}
\newcommand{\Ind}{\mathrm{Ind}}
\newtheorem*{thmintro}{$\mathbf{Theorem}$}
\renewcommand{\S}{Section }
\title[]{Universality of multiplicative infinite loop space machines}
\author{David Gepner, Moritz Groth and Thomas Nikolaus}
\date{\today}
\begin{document}

\begin{abstract}
We establish a canonical and unique tensor product for commutative monoids and groups in an $\infty$-category $\calc$ which generalizes the ordinary tensor product of abelian groups. Using this tensor product we show that $\bbE_n$-(semi)ring objects in $\calc$ give rise to $\bbE_n$-ring spectrum objects in $\calc$. In the case that $\calc$ is the $\infty$-category of spaces this produces a multiplicative infinite loop space machine which can be applied to the algebraic K-theory of rings and ring spectra. 

The main tool we use to establish these results is the theory of smashing localizations of presentable $\infty$-categories. 
In particular, we identify preadditive and additive $\infty$-categories as the local objects for certain smashing localizations.
A central theme is the stability of algebraic structures under basechange; for example, we show $\Ring(\cald\otimes\calc)\simeq\Ring(\cald)\otimes\calc$.
Lastly, we also consider these algebraic structures from the perspective of Lawvere algebraic theories in $\infty$-categories.
\end{abstract}

\maketitle
\setcounter{section}{-1}

\tableofcontents

\section{Introduction}\label{sec:intro}

The Grothendieck group $\mathrm{K}_0{(M)}$ of a commutative monoid $M$, also known as the group completion, is the universal abelian group which receives a monoid map from $M$. It was a major insight of Quillen that higher algebraic K-groups can be defined as the homotopy groups of a certain spectrum which admits a similar description: more precisely, from the perspective of higher category theory, the algebraic K-theory spectrum of a ring $R$ can be understood as the group completion of the groupoid of projective $R$-modules, viewed as a symmetric monoidal category with respect to the coproduct.

When $R$ is commutative, the algebraic K-groups inherit a multiplication which stems from the tensor product of $R$-modules. Just as the K-groups arise as homotopy groups of the K-theory spectrum, it is essential for computational and theoretical purposes to understand the multiplication on these groups as coming from a highly structured multiplication on the K-theory spectrum itself. 
Unfortunately it turned out to be hard to construct such a multiplication directly, partly because for a long time the proper framework to deal with multiplicative structures on spectra was missing. Important work on this question was pioneered by May et.~al. \cite{May82}, and the general theory of homotopy coherent algebraic structures goes back at least to Boardman-Vogt \cite{BV}, May \cite{Operad}, and Segal \cite{segal_categories}.

It was first shown by May that the group completion functor from $\bbE_\infty$-spaces to spectra preserves multiplicative structure \cite{May82}; see also the more recent accounts \cite{May_WhatI, May_WhatII, May_WhatIII}. 
Since then, several authors have given alternative constructions of multiplicative structure on K-theory spectra: most notably, Elmendorf and Mandell promote the infinite loop space machine of Segal to a multifunctor in \cite{EM06} and in \cite{EM09} they extend the K-theory functor from symmetric monoidal categories to symmetric multicategories (a.k.a.\ coloured operads), and Baas-Dundas-Richter-Rognes show how to correct the failure of the `phony multiplication' on the Grayson-Quillen $S^{-1}S$-construction in \cite{BDRR}, as identified by Thomason \cite{phony}.

All of these approaches are very carefully crafted and involve for example the intricacies of specific pairs of operads or indexing categories. 
Here we take a different approach to `multiplicative infinite loop space theory', replacing the topological and combinatorial constructions of specific machines by the use of \emph{universal properties}. 
 The main advantage of our approach is that we get strong uniqueness results, which follow for free from the universal properties. 
The price we pay is that we use the extensive machinery of $\infty$-categories and argue in the abstract, without the aid of concrete models.
Similar results for the case of Waldhausen K-theory, also using the language of $\infty$-categories, have been obtained by Barwick in the
recent paper \cite{Bar}.\\

In this paper we choose to use the language of (presentable) $\infty$-categories. 
But we emphasize the fact that every combinatorial model category gives rise to a presentable $\infty$-category, and that
all presentable $\infty$-categories arise in this way.
Moreover the study of presentable $\infty$-categories is basically the same as the study of combinatorial model categories, so that in principle all our results could also be formulated in the setting of model categories.

Let us begin by mentioning one of our main results. Associated to an $\infty$-category~$\calc$ are the $\infty$-categories $\calc_*$ of pointed objects in $\calc$, $\Mon(\calc)$ of commutative monoids in $\calc$, $\Grp(\calc)$ of commutative groups in $\calc$, and $\Sp(\calc)$ of spectrum objects in $\calc$. For these $\infty$-categories we establish the following:

\begin{thmintro}(\autoref{thm:tensorproducts})
{\it
Let $\calc^\otimes$ be a closed symmetric monoidal structure on a presentable $\infty$-category~$\calc$.
The $\infty$-categories $\calc_*$, $\Mon(\calc)$, $\Grp(\calc)$, and $\Sp(\calc)$ all admit closed symmetric monoidal structures, which are uniquely determined by the requirement that the respective free functors from~$\calc$ are symmetric monoidal. Moreover, each of the following free functors also extends uniquely to a symmetric monoidal functor
\[
\calc_*\to\Mon(\calc) \to \Grp(\calc) \to \Sp(\calc)\, .
\]}
\end{thmintro}
\noindent
Note that these symmetric monoidal structures allow us to talk about $\bbE_n$-(semi)ring objects and $\bbE_n$-ring spectrum objects in $\calc$.
Before we sketch the general ideas involved in the proof, it is worth indicating what this theorem amounts to for specific choices of $\calc$. 
\begin{enumerate}
\item
 If $\calc$ is the ordinary category of sets, then the symmetric monoidal structures of \autoref{thm:tensorproducts} recover for instance the tensor product of abelian monoids and abelian groups. 
This also reestablishes the easy result that the group completion functor $\mathrm{K}_0$ is symmetric monoidal.

\item
In the case of the 2-category $\Cat$ of ordinary categories, functors, and natural isomorphisms we obtain a symmetric monoidal structure on the $2$-category of symmetric monoidal categories.  
The symmetric monoidal structure on $\SymMonCat \simeq \Mon(\Cat)$ has been the subject of confusion in the past due to the fact that $\SymMonCat$ only has the 
desired symmetric monoidal structure when considered as a 2-category and not as a 1-category. 
In this case, $\bbE_n$-(semi)ring objects are \emph{$\bbE_n$-(semi)ring categories} (sometimes also called \emph{rig categories}), important examples of which are given by the bipermutative categories of \cite{May_WhatII}. 
We also obtain higher categorical analogues of this picture using $\Cat_n$ and~$\Catoo$.\footnote{Interestingly, we have equivalences $\Grp(\Cat_n)\simeq\Grp(\mathrm{Gpd}_{n})$ and $\Sp(\Cat_n) \simeq \Sp(\mathrm{Gpd}_{n})$, and the latter is trivial unless $n = \infty$; more generally, $\Sp(\calc)$ is trivial for any $n$-category $\calc$ if $n$ is finite.}

\item
Finally, and most importantly for this paper, we consider \autoref{thm:tensorproducts} in the special case of the $\infty$-category~$\cals$ of spaces (which can be obtained from the model category of spaces or simplicial sets). That way we get canonical monoidal structures on $\bbE_\infty$-spaces and grouplike $\bbE_\infty$-spaces. The resulting $\bbE_n$-algebras are \emph{$\bbE_n$-(semi)ring spaces}; more precisely, they are an $\infty$-categorical analogue of the $\bbE_n$-(semi)ring spaces of May (see, for example,~\cite{May_WhatI}). Moreover, we obtain unique multiplicative structures on the group completion functor $\Mon(\cals) \to \Grp(\cals)$ and the `delooping' functor $\Grp(\cals) \to \Sp$ which assigns a spectrum to a grouplike $\bbE_\infty$-space. In particular, the spectrum associated to  an $\bbE_n$-(semi)ring space is an $\bbE_n$-ring spectrum, which amounts to \emph{`multiplicative infinite loop space theory'}. 
 \end{enumerate}

These facts can be assembled together to obtain a new description of the multiplicative structure on the algebraic K-theory functor $\K\colon \SymMonCat \to \Sp$  and its $\infty$-categorical variant  $\K\colon \SymMonCat_\infty \to \Sp$  (\S\ref{sec:infinite}). In particular, the algebraic K-theory of an $\bbE_n$-semiring ($\infty$-)category is canonically an 
 $\bbE_n$-ring spectrum. By a `recognition principle' for $\bbE_n$-semiring ($\infty$-)categories, this applies to many examples of interest. More precisely, we show that these semiring $\infty$-categories can be obtained from $\bbE_n$-monoidal $\infty$-categories with coproducts such that the monoidal structure preserves coproducts in each variable separately (\autoref{seminringex}). For instance ordinary closed monoidal, braided monoidal, or symmetric monoidal categories admit the structure of $\bbE_n$-semiring categories (for $n=1,2,\infty$, 
respectively) in which the `addition' is given by the coproduct and the `mutliplication' is given by the tensor product. More specific 
examples 
are given by ($\infty$-)categories of modules over ordinary commutative rings or  $\bbE_n$-ring spectra.\footnote{But note that the $\infty$-category of  modules for an $\bbE_n$-ring spectrum is only an $\bbE_{n-1}$-semiring $\infty$-category.}\\

One central idea to prove \autoref{thm:tensorproducts} as stated above, which is also of independent interest, is to identify the assignments 
\begin{equation}\label{assignments}
\calc \mapsto \calc_*, \qquad \calc \mapsto \Mon(\calc),\qquad \calc \mapsto \Grp(\calc),\qquad \calc \mapsto \Sp(\calc)
\end{equation}
 as universal constructions. The first and the last case have already been thoroughly
discussed by Lurie in~\cite{HA}, where it is shown that, in the world of presentable $\infty$-categories, 
$\calc_*$ is the \emph{free pointed} $\infty$-category on~$\calc$ and $\Sp(\calc)$ is the \emph{free stable} $\infty$-category on $\calc$. We extend this picture by introducing \emph{preadditive} and \emph{additive} $\infty$-categories (see also~\cite{TV} and \cite{Joyal}). These notions are obtained by imposing additional exactness conditions on pointed $\infty$-categories, just as is done in the case of ordinary categories. In fact, a presentable $\infty$-category $\calc$ is (pre)additive if and only if its homotopy category $\Ho(\calc)$ is (pre)additive in the sense of ordinary category theory. We show that, again in the framework of presentable $\infty$-categories, $\Mon(\calc)$ is the \emph{free preadditive} $\infty$-category on $\calc$ and that $\Grp(\calc)$ is the \emph{free additive} $\infty$-category on $\calc$ (\autoref{cor:frepre}). 

As an application of this description as free categories one can deduce the existence and uniqueness of the functors
\[
\calc\to\calc_*\to\Mon(\calc) \to \Grp(\calc) \to \Sp(\calc_\ast)
\]
from the fact that every stable $\infty$-category is additive, every additive $\infty$-category is preadditive and every preadditive $\infty$-category is pointed. 
More abstractly, the assignments \eqref{assignments} give rise to endofunctors of the $\infty$-category $\Prl$ of presentable $\infty$-categories and left adjoint functors. The aforementioned universal properties are equivalent to the observation that these endofunctors are localizations (in the sense of Bousfield) of $\Prl$ with local objects the pointed, preadditive, additive, and stable presentable $\infty$-categories, respectively. \\

A second main theme of the paper  is the stability of algebraic structures under basechange. For example we show that we have equivalences
$$
\Mon(\calc \otimes \cald) \simeq \Mon(\calc) \otimes \cald \qquad \Ring_{\bbE_n}(\calc \otimes \cald) \simeq  \Ring_{\bbE_n}(\calc) \otimes \cald\ ,
$$
where $\otimes$ denotes the tensor product on $\Prl$ as constructed in \cite{HA} (\autoref{cormon} and \autoref{prop_rig_ten}). Such basechange properties are satisfied by many endofunctors of $\Prl$ which arise when considering algebraic structures of certain kinds, e.g.\ $\calc \mapsto \Mod_\bbT(\calc)$ for a Lawvere algebraic theory $\bbT$. We give a brief account of algebraic theories in ~\autoref{sec:app}. 

A key insight here is to consider endofunctors of $\Prl$ which satisfy both properties: namely, they are simultaneously localizations and satisfy basechange. 
In keeping with the terminology of stable homotopy theory we refer to such functors as \emph{smashing localizations} of $\Prl$.  The endofunctors $(-)_\ast, \Mon, \Grp$ and $\Sp$ \eqref{assignments} are the main examples treated in this paper.
Then the proof of \autoref{thm:tensorproducts} follows as a special case of  the general theory of smashing  localizations $L\colon\Prl \to \Prl$. For example we prove that 
if $\calc \in \Prl$ is closed symmetric monoidal, then 
the $\infty$-category $L\calc$ admits a {\em unique} closed symmetric monoidal structure such that the localization map $\calc \to L\calc$ is a symmetric monoidal functor (\autoref{prop:smmonoidal}). \\

\noindent
\textbf{Organization of the paper.}
In \S\ref{sec:monoids}, we recall the definition of the $\infty$-category of monoid and group objects in an $\infty$-category. 
They form the generic examples of (pre)addtive $\infty$-categories which we 
introduce in \S\ref{sec:pre}. In \S\ref{sec:smash}, we study smashing localizations of $\Prl$, which turns out to be the central notion needed to deduce many of the subsequent results in this paper. We then show, in \S\ref{sec:mon}, that the formation of commutative monoids and groups in presentable $\infty$-categories are examples of smashing localizations of $\Prl$, and we identify these localizations with the free (pre)additive $\infty$-category functor. This leads to the existence of the canonical symmetric monoidal structures described in 
\S\ref{sec:spec}, and the next  
\S\ref{sec:more} is devoted to studying the functoriality of these structures. Then in  \S\ref{sec:ring} we consider $\infty$-categories of (semi)ring objects in a closed symmetric monoidal presentable $\infty$-category; these are used in \S\ref{sec:infinite} to show that the algebraic K-theory of an $\bbE_n$-semiring $\infty$-category is an $\bbE_n$-ring spectrum.
Finally, in Appendix \ref{comonoids} we show a relation of functors with comonoids, and in Appendix \ref{sec:app} we consider monoid, group, and ring objects from the perspective of Lawvere algebraic theories. \\

\noindent
\textbf{Conventions.} We freely use the language of $\infty$-categories throughout this paper.
In particular, we adopt the notational conventions of \cite{HTT} and \cite{HA} and provide more specific references where necessary.\\

\noindent
\textbf{Acknowledgements.}
We would like to thank Ulrich Bunke for suggesting that we work out these results in the setting of $\infty$-categories and for carefully reading a previous draft.
We'd also like to  thank Peter May, Tony Elmendorf and Mike Mandell for helpful comments and discussions. 

\section{\texorpdfstring{$\infty$}{Infinity}-categories of commutative monoids and groups}\label{sec:monoids}

Given an $\infty$-category $\calc$ with finite products, we may form the $\infty$-category $\Mon(\calc)$ of $\Eoo$-monoids in $\calc$.
By definition, an $\Eoo$-monoid $M$ in $\calc$ is a functor $M\colon\NFin\to\calc$ such that the morphisms $M(\n)\to M(\one)$ induced by the inert maps $\rho^i\colon\n\to\one$ exhibit~$M(\n)$ as an $n$-fold power of~$M(\one)$ in~$\calc$ (see \cite[2.1.1.8, 2.4.2.1, 2.4.2.2]{HA} for details).
In the terminology of~\cite{segal_categories},~$M$ is called a \emph{special}~$\Gamma$-object of~$\calc$.
In what follows we will sometimes abuse notation and also use the same name for the underlying object of such an~$\Eoo$-monoid.
Given an $\Eoo$-monoid~$M$, we obtain a (coherently associative and commutative) multiplication map 
\[m\colon M\times M\to M\, ,\]
uniquely determined up to a contractible space of choices.

\begin{prop}\label{prop:gpl}
Let $\calc$ be an $\infty$-category with finite products and let $M$ be an $\bbE_\infty$-monoid in~$\calc$. Then the following conditions are equivalent:
\begin{enumerate}
\item The $\Eoo$-monoid~$M$ admits an \emph{inversion} map, i.e., there is a map $i\colon M\to M$ such that the composition
\begin{equation*}
M \xrightarrow{\Delta} M \times M \xrightarrow{id \times i} M \times M \xrightarrow{m}  M 
\end{equation*}
is homotopic to the identity.
\item The commutative monoid object of $\Ho(\calc)$ underlying the $\Eoo$-monoid~$M$ is a \emph{group} object.
\item 
The \emph{shear map} $s\colon M \times M \to M \times M$, defined as the projection $pr_1\colon M \times M \to M$ on the first factor and the multiplication $m\colon M \times M \to M$ on the second factor, is an equivalence.
\item
The special~$\Gamma$-object $M\colon\NFin\to\calc$ is \emph{very special} (again in the terminology of~\cite{segal_categories}).
\end{enumerate}
\end{prop}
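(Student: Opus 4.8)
The plan is to prove the three equivalences $(1)\Leftrightarrow(2)$, $(2)\Leftrightarrow(3)$ and $(3)\Leftrightarrow(4)$, in each case reducing to the corresponding statement about commutative monoid objects in an ordinary $1$-category by passing to the homotopy category $\Ho(\calc)$. The preliminary observation is that the canonical functor $\gamma\colon\calc\to\Ho(\calc)$ preserves finite products: since $\Hom_{\Ho(\calc)}(X,Y)=\pi_0\Map_\calc(X,Y)$, since $\Map_\calc(X,Y\times Z)$ is equivalent to $\Map_\calc(X,Y)\times\Map_\calc(X,Z)$, and since $\pi_0$ preserves products, the object $Y\times Z$ with its projections remains a product in $\Ho(\calc)$, and likewise $\gamma$ preserves the terminal object. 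Hence $\gamma$ carries the $\Eoo$-monoid $M\colon\NFin\to\calc$ to an honest commutative monoid object $\gamma M$ of the $1$-category $\Ho(\calc)$ --- the object referred to in~(2) --- and it carries the multiplication $m$, the diagonal $\Delta$, the shear map $s$ and the unit of $M$ to the analogous structure maps of $\gamma M$. Two further standard facts will be used freely: a morphism of $\calc$ is an equivalence if and only if its image under $\gamma$ is an isomorphism, and $\Hom_{\Ho(\calc)}(M,M)=\pi_0\Map_\calc(M,M)$, so a self-map of $M$ considered up to homotopy in $\calc$ is exactly a self-map of $\gamma M$ in $\Ho(\calc)$.

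For $(1)\Leftrightarrow(2)$: condition~(1) asserts only the existence of a map $i$ together with a single homotopy, with no higher coherence, so by the last remark it is equivalent to the existence of an endomorphism $i$ of $\gamma M$ satisfying the inversion identity in $\Ho(\calc)$; and a commutative monoid object in a cartesian $1$-category equipped with such a (necessarily two-sided, by commutativity) inversion map is, by definition, a group object, which is~(2). For $(2)\Leftrightarrow(3)$: recall the elementary fact that a commutative monoid object $N$ in a cartesian $1$-category is a group object if and only if its shear map $(pr_1,m)\colon N\times N\to N\times N$ is an isomorphism --- if $i$ is an inverse then $(pr_1, m\circ(i\times\id))$ is a two-sided inverse of the shear, and conversely $i$ is recovered as $pr_2$ of the inverse shear precomposed with $(\id,e)$, where $e$ is the constant map at the unit. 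Applying this with $N=\gamma M$, and using that $\gamma$ carries the shear map of $M$ to the shear map of $\gamma M$ and detects equivalences, we conclude that $s$ is an equivalence in $\calc$ if and only if $\gamma M$ is a group object, i.e.\ $(3)\Leftrightarrow(2)$.

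The equivalence $(3)\Leftrightarrow(4)$ is an unwinding of the definition of \emph{very special}. Since $M$ is special, the inert maps $\rho^1,\rho^2\colon\two\to\one$ induce an equivalence $M(\two)\simeq M(\one)\times M(\one)$, under which $\rho^1_*$ becomes the projection $pr_1$ and the active (fold) map $\mu\colon\two\to\one$ induces the multiplication $m$. By definition $M$ is very special precisely when the map $(\rho^1_*,\mu_*)\colon M(\two)\to M(\one)\times M(\one)$ is an equivalence; transported along the identification just described, this map is exactly the shear map $s=(pr_1,m)$, so $(4)$ holds if and only if $s$ is an equivalence, i.e.\ if and only if $(3)$ holds.

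I do not expect a genuine obstacle: once everything is transported to the $1$-category $\Ho(\calc)$ each implication is classical, so the only real work is bookkeeping. The two points deserving care are: (i) checking that $\gamma$ really transports all the structure maps of $M$ to the corresponding structure maps of $\gamma M$, which rests entirely on $\gamma$ preserving finite products; and (ii) fixing the correct homotopy-invariant reading of \emph{very special}, since in an arbitrary $\infty$-category Segal's original $\pi_0$-level formulation must be rephrased as the requirement that the comparison map $(\rho^1_*,\mu_*)$ of objects of $\calc$ be an equivalence, and one should confirm this is the intended meaning. One should also note the harmless point that~(1), being purely a homotopy-level statement, moves back and forth between $\calc$ and $\Ho(\calc)$ with no coherence data to lift.
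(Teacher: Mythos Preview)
Your proof is correct and follows exactly the same approach as the paper: the paper's proof is the single sentence ``This follows immediately from the fact that $\calc\to \mathrm{N}(\Ho(\calc))$ is conservative and preserves products,'' and your argument is a careful unpacking of precisely that. Your additional remarks on the meaning of ``very special'' and on why no coherence data needs to be lifted are accurate and make explicit what the paper leaves implicit.
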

\begin{proof}
This follows immediately from the fact that $\calc\to \mathrm{N}(\Ho(\calc))$ is conservative and preserves products.
\end{proof}

\begin{defn}
Let~$\calc$ be an $\infty$-category with finite products. An object~$M\in\Mon(\calc)$ is called an {\em $\bbE_\infty$-group} in $\calc$ if it satisfies the equivalent conditions of \autoref{prop:gpl}. 
We write $\Grp(\calc)$ for the full subcategory of~$\Mon(\calc)$ consisting of the $\Eoo$-groups.
\end{defn}

\begin{rmk}
There are similar equivalent characterizations as in the proposition for $\bbE_n$-monoids,~$n\geq 1$. In fact, they can be applied more generally to algebras for monochromatic $\infty$-operads~$\calo$ equipped with a morphism $\bbE_1\to\calo$. In this case, these characterizations serve as a definition of $\calo$-\emph{groups}. 
Since an ordinary monoid having right-inverses is a group, we can use the fact that every morphism in~$\Ho(\calc)$ lifts to a morphism in~$\calc$ to conclude that also the characterizations (i) and (iii) are equivalent to their respective `two-sided variants', but in characterization (iv) one must instead use (very) special simplicial objects in~$\calc$.
\end{rmk}

\begin{rmk}
Recall (cf.\ \cite[Remark 5.1.3.3]{HA}) that an $\bbE_n$-monoid object $M$ of an $\infty$-topos $\calc$ is said to be {\em grouplike} if (the sheaf) $\pi_0 M$ is a group object.
In more general situations, such as for instance $\calc=\Cat_\infty$, the correct $\pi_0$ is unclear, and in any case the resulting notion of `grouplike monoid' may not agree with that of `group'.
\end{rmk}

\begin{rmk}
In our definition of a group object we force the `inversion' morphism to be an actual morphism of the underlying objects in $\calc$. In many situations, however, there is a natural inversion which is naturally only an anti-morphism. For example, this is the case in a tensor category with tensor inverses or in the category of Poisson Lie groups. This suggests that there should be a notion of group object with such an anti-inversion morphism. It would be interesting to study such a notion, though we will not need this.
\end{rmk}

Given two $\infty$-categories~$\calc$ and~$\cald$ with finite products, we write $\Fun^\Pi(\calc,\cald)$ for the $\infty$-category of finite product preserving functors from~$\calc$ to~$\cald$. If $\calc$ and $\cald$ are complete, we write $\FunR(\calc,\cald)$ for the $\infty$-category of limit preserving functors. In this situation, the $\infty$-category $\FunR(\calc,\cald)$ is also complete and limits in $\FunR(\calc,\cald)$ are formed pointwise in~$\cald$. This follows from the corresponding statement for $\Fun(\calc,\cald)$ and from the fact that such a pointwise limit of functors is again limit preserving.

\begin{lem}\label{lem:algR}
If $\calc$ and $\cald$ are $\infty$-categories with finite products, then $\Fun^\Pi(\calc,\cald)$ also has finite products and we have canonical equivalences
\[
\Mon\big(\Fun^\Pi(\calc,\cald)\big) \simeq \Fun^\Pi\big(\calc, \Mon(\cald) \big)
\]
and
\[
\Grp\big(\Fun^\Pi(\calc,\cald)\big) \simeq \Fun^\Pi\big(\calc, \Grp(\cald) \big).
\]
If $\calc$ and $\cald$ are complete, then so is $\FunR(\calc,\cald)$, and we have canonical equivalences
\[
\Mon\big(\FunR(\calc,\cald)\big) \simeq \FunR\big(\calc, \Mon(\cald) \big)
\]
and
\[
\Grp\big(\FunR(\calc,\cald)\big) \simeq \FunR\big(\calc, \Grp(\cald) \big).
\]
\end{lem}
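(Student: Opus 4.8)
The plan is to reduce all four equivalences to the single formal fact that $\Mon(-)$ and $\Grp(-)$, as endofunctors of $\infty$-categories with finite products, commute with limits taken in $\Cat_\infty$ — more precisely, with cotensoring by a fixed $\infty$-category and with passage to full subcategories cut out by product-preservation conditions. The key point is that $\Mon(\cald)$ and $\Grp(\cald)$ are defined as full subcategories of a functor $\infty$-category $\Fun(\NFin,\cald)$ (for $\Grp$, further cut out inside $\Mon$ by the shear-map condition of \autoref{prop:gpl}), so the whole argument is a matter of interchanging two functor categories and tracking which objects satisfy which pointwise conditions.

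First I would treat $\Fun^\Pi$. Start from the definitional identification
\[
\Mon\big(\Fun^\Pi(\calc,\cald)\big) \subseteq \Fun\big(\NFin, \Fun^\Pi(\calc,\cald)\big) \simeq \Fun\big(\calc, \Fun(\NFin,\cald)\big),
\]
where the last equivalence is the exponential law and uses that $\Fun^\Pi(\calc,\cald)\subseteq\Fun(\calc,\cald)$ is a full subcategory, together with the fact (noted in the paragraph preceding \autoref{lem:algR}) that limits — in particular finite products — in $\Fun^\Pi(\calc,\cald)$ are computed pointwise in $\cald$. Under this equivalence a functor $F\colon\NFin\to\Fun^\Pi(\calc,\cald)$, i.e.\ a functor $\calc\to\Fun(\NFin,\cald)$ landing in product-preserving functors in the $\calc$-variable, is an $\Eoo$-monoid precisely when, for every $\langle n\rangle$, the Segal maps $F(\langle n\rangle)\to F(\langle 1\rangle)^{n}$ are equivalences \emph{in $\Fun^\Pi(\calc,\cald)$}; since products there are pointwise, this holds iff it holds after evaluating at every object of $\calc$, i.e.\ iff the transposed functor $\calc\to\Fun(\NFin,\cald)$ factors through the full subcategory $\Mon(\cald)$. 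Because $\Mon(\cald)$ is a full subcategory, such a factorization is a property, so we get an equivalence of full subcategories $\Mon(\Fun^\Pi(\calc,\cald))\simeq\Fun(\calc,\Mon(\cald))\cap\Fun^\Pi(\calc,\Fun(\NFin,\cald))$. The final step is to check that a functor $\calc\to\Mon(\cald)$ is product-preserving as a functor to $\Mon(\cald)$ iff it is product-preserving as a functor to $\Fun(\NFin,\cald)$; this again follows because products in $\Mon(\cald)$ are created by the forgetful functor $\Mon(\cald)\to\Fun(\NFin,\cald)$ (products of $\Eoo$-monoids are computed on underlying objects, which is itself an instance of the pointwise-limits principle). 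That gives the first equivalence. For the $\FunR$ version, replace $\Fun^\Pi$ by $\FunR$ and ``finite products computed pointwise'' by ``all limits computed pointwise'' — the text has already recorded that $\FunR(\calc,\cald)$ is complete with pointwise limits — and the identical bookkeeping applies, using additionally that the forgetful functor $\Mon(\cald)\to\Fun(\NFin,\cald)$ creates limits (not just products), which holds since $\Mon(\cald)$ is cut out by limit conditions.

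For the $\Grp$ statements I would not repeat the argument but deduce them from the $\Mon$ statements. By definition $\Grp(\mathcal D)\subseteq\Mon(\mathcal D)$ is the full subcategory of objects $M$ for which the shear map $s\colon M\times M\to M\times M$ is an equivalence (\autoref{prop:gpl}(iii)). Given $M\in\Mon(\Fun^\Pi(\calc,\cald))$ corresponding to $\widetilde M\colon\calc\to\Mon(\cald)$, naturality of the shear construction identifies the shear map of $M$, computed in $\Fun^\Pi(\calc,\cald)$ where products are pointwise, with the natural transformation whose value at $c\in\calc$ is the shear map of $\widetilde M(c)$. An object of a functor $\infty$-category is an equivalence iff it is a pointwise equivalence, so $M$ is an $\Eoo$-group iff $\widetilde M(c)$ is an $\Eoo$-group for every $c$, i.e.\ iff $\widetilde M$ factors through $\Grp(\cald)$. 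Combined with the first equivalence this yields $\Grp(\Fun^\Pi(\calc,\cald))\simeq\Fun^\Pi(\calc,\Grp(\cald))$, and the $\FunR$ case is identical.

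I expect the main obstacle to be purely a matter of care rather than depth: namely, making the ``factors through a full subcategory iff a pointwise condition holds'' steps fully rigorous at the $\infty$-categorical level, where one must invoke that for a full subcategory $\cald_0\subseteq\cald$ the induced $\Fun(\calc,\cald_0)\to\Fun(\calc,\cald)$ is fully faithful with essential image the functors landing objectwise in $\cald_0$ (\cite[5.2.8.5 / 1.2.7.3]{HTT} and the exponential law $\Fun(\NFin\times\calc,\cald)\simeq\Fun(\NFin,\Fun(\calc,\cald))\simeq\Fun(\calc,\Fun(\NFin,\cald))$), and that the conditions defining $\Mon$, $\Grp$, $\Fun^\Pi$, and $\FunR$ are all of the form ``certain canonical maps are equivalences,'' hence detected pointwise and stable under the interchange of functor categories. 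Once this dictionary is set up, all four equivalences fall out simultaneously, and — a point worth recording — they are moreover compatible with the forgetful functors, which one also gets for free from the construction.
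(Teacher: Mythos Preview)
Your proposal is correct and is essentially the paper's own argument: both sides are identified as full subcategories of $\Fun(\NFin\times\calc,\cald)$ via the exponential law, and one checks that the essential images coincide because the Segal/specialness condition and the product-preservation (resp.\ limit-preservation) condition are both detected pointwise. The only cosmetic differences are that the paper treats the $\FunR$ case first and leaves $\Fun^\Pi$ as analogous, and for groups the paper simply reruns the argument with ``very special'' in place of ``special'' (characterization~(iv) of \autoref{prop:gpl}) rather than deducing it from the monoid case via the shear map as you do.
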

\begin{proof}
We only give the proof of the second case, as the first one is entirely analogous.
As recalled above, an $\Eoo$-monoid in an $\infty$-category~$\cale$ is given by a functor $M\colon \NFin \to \cale$ satisfying the usual Segal condition, i.e., the inert maps $\n\to\one$ exhibit $M(\n)$ as the~$n$-fold power of~$M(\one)$. We denote the full subcategory spanned by such functors by 
\begin{equation*}
\Fun^{\times}\big(\NFin, \cale\big) \subseteq \Fun\big(\NFin, \cale\big).
\end{equation*}
Using this notation, we obtain a fully faithful inclusion
\begin{eqnarray*}
\Mon\big(\FunR(\calc,\cald)\big) &\simeq& \Fun^{\times}\big(\NFin,\FunR(\calc,\cald)\big)  \\
&\subseteq& \Fun\big(\NFin,\Fun(\calc,\cald)\big) \\
&\simeq& \Fun\big(\NFin \times \calc, \cald\big)
\end{eqnarray*}
whose essential image consists of those functors~$F$
such that $F(-,C)\colon\NFin\to\cald$ is special for all $C\in\calc$ and such that $F(\n,-)\colon\calc\to\cald$ preserves limits for all~$\n\in\NFin$. This follows from the fact that limits in $\FunR(\calc,\cald)$ are formed pointwise, as remarked above. 
In a similar vein, we obtain a fully faithful inclusion
\begin{eqnarray*}
\FunR\big(\calc, \Mon(\cald)\big) & \simeq& \FunR\big(\calc, \Fun^\times(\NFin, \cald)\big) \\
&\subseteq& \Fun\big(\calc, \Fun(\NFin, \cald)\big) \\
&\simeq& \Fun(\calc \times \NFin, \cald) \\
& \simeq& \Fun(\NFin \times \calc , \cald)
\end{eqnarray*}
with the same essential image, concluding the proof for the case of monoids. The proof for the case of groups works exactly the same. In fact, using characterization~(4) of \autoref{prop:gpl}, it suffices to replace special $\Gamma$-objects by very special $\Gamma$-objects. 
\end{proof}

\section{Preadditive and additive \texorpdfstring{$\infty$}{infinity}-categories}\label{sec:pre}

An $\infty$-category is preadditive if finite coproducts and products exist and are equivalent. More precisely, we have the following definition.

\begin{defn}
An $\infty$-category $\calc$ is \emph{preadditive} if it is pointed, admits finite coproducts and finite products, and the canonical morphism $C_1 \sqcup C_2 \to C_1 \times C_2$ is an equivalence for all objects $C_1, C_2 \in \calc$.
In this case any such object will be denoted by $C_1 \oplus C_2$ and will be referred to as a \emph{biproduct} of~$C_1$ and~$C_2$. 
\end{defn}
Let us collect a few immediate examples and closure properties of preadditive $\infty$-categories.

\begin{eg}
An ordinary category~$\calc$ is preadditive if and only if $\N(\calc)$ is a preadditive $\infty$-category. Products and opposites of preadditive $\infty$-categories are preadditive. Clearly any $\infty$-category equivalent to a preadditive one is again preadditive. Finally, if~$\calc$ is a preadditive~$\infty$-category and~$K$ is any simplicial set, then~$\Fun(K,\calc)$ is preadditive.
This follows immediately from the fact that (co)limits in functor categories are calculated pointwise (\cite[Corollary~5.1.2.3]{HTT}).
\end{eg}
\noindent
We will obtain more examples of preaddtive $\infty$-categories from the following proposition, which gives a connection to \S\ref{sec:monoids}.

\begin{prop}\label{pro:pre}
Let~$\calc$ be an $\infty$-category with finite coproducts and products. Then the following are equivalent:
\begin{enumerate}
\item\label{p1}
The $\infty$-category $\calc$ is preadditive.
\item\label{p2}
The homotopy category $\Ho(\calc)$ is preadditive.
\item\label{p3}
The $\infty$-operad $\calc^{\sqcup} \to \NFin$  as constructed in \cite[Construction~2.4.3.1]{HA} is cartesian (\cite[Definition~2.4.0.1]{HA}).
\item\label{p4}
The forgetful functor $\Mon(\calc) \to \calc$ is an equivalence.
\end{enumerate}
Moreover, $\Mon(\calc)$ is preadditive if $\calc$ has finite products.
\end{prop}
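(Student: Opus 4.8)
The plan is to first establish the final (``Moreover'') assertion in isolation, and then to prove the fourfold equivalence through the cycle (i)$\Rightarrow$(iii)$\Rightarrow$(iv)$\Rightarrow$(i) together with the separate equivalence (i)$\Leftrightarrow$(ii). The Moreover statement is exactly what will drive the implication (iv)$\Rightarrow$(i), so there is no circularity provided it is proved on its own.

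For (i)$\Leftrightarrow$(ii) I would argue with the canonical functor $\gamma\colon\calc\to\N(\Ho(\calc))$, exactly as in the proof of \autoref{prop:gpl}. It is conservative, and because the hom-sets of $\Ho(\calc)$ are the $\pi_0$ of the mapping spaces of $\calc$, it preserves finite products, finite coproducts, and the initial and terminal objects. Preadditivity on either side is the conjunction of two conditions, and each transfers across $\gamma$ by conservativity: pointedness is the assertion that $\emptyset\to\ast$ is an equivalence, and the biproduct condition is that $C_1\sqcup C_2\to C_1\times C_2$ is an equivalence. Since $\gamma$ carries each of these comparison maps in $\calc$ to the corresponding comparison map in $\Ho(\calc)$ and reflects equivalences, (i) and (ii) coincide. (The standing hypothesis that $\calc$ has finite products and coproducts guarantees the same for $\Ho(\calc)$, so both notions of preadditivity are well-posed.)

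For (i)$\Rightarrow$(iii) I would unwind \cite[Definition~2.4.0.1]{HA} for the coproduct symmetric monoidal structure $\calc^\sqcup$ of \cite[Construction~2.4.3.1]{HA}, whose unit is $\emptyset$ and whose tensor is $\sqcup$. Preadditivity makes $\emptyset$ a zero object, hence terminal; the projection $X\sqcup Y\to X\sqcup\emptyset\simeq X$ then has components $\id_X$ and the zero map, so the induced cartesian comparison $X\sqcup Y\to X\times Y$ is precisely the canonical biproduct map and is an equivalence. Thus $\calc^\sqcup$ is cartesian. For (iii)$\Rightarrow$(iv) I would invoke the essential uniqueness of cartesian structures: if $\calc^\sqcup$ is cartesian then $\calc^\sqcup\simeq\calc^\times$ as symmetric monoidal $\infty$-categories. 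Since $\Mon(\calc)=\mathrm{CAlg}(\calc^\times)$ by definition, and since commutative algebras in a cocartesian structure are trivial (so that $\mathrm{CAlg}(\calc^\sqcup)\to\calc$ is an equivalence; cf.\ \cite{HA}), this gives $\Mon(\calc)=\mathrm{CAlg}(\calc^\times)\simeq\mathrm{CAlg}(\calc^\sqcup)\simeq\calc$ compatibly with the forgetful functors to $\calc$, which is (iv).

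Finally, for the Moreover statement and (iv)$\Rightarrow$(i): I would show directly that $\Mon(\calc)$ is preadditive as soon as $\calc$ has finite products. It is pointed, because the trivial monoid on the terminal object $\ast$ is simultaneously the terminal object of $\Mon(\calc)$ (limits being created by $\Mon(\calc)\to\calc$) and the initial object (the unit $\ast$ of $\calc^\times$ is the initial commutative algebra). Finite products in $\Mon(\calc)$ are created from those of $\calc$, while finite coproducts are computed by the tensor product of commutative algebras, which for the cartesian structure is the underlying product; hence for $M,N\in\Mon(\calc)$ both $M\sqcup N$ and $M\times N$ are given by the underlying product, and the canonical comparison is an equivalence. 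This uses only finite products of $\calc$, as claimed. Then (iv)$\Rightarrow$(i) is immediate, since $\calc\simeq\Mon(\calc)$ inherits preadditivity. I expect the main obstacle to be precisely this identification of coproducts in $\Mon(\calc)$ with underlying products, together with the verification that the resulting biproduct comparison is an equivalence: this is the one point at which genuine $\infty$-categorical input is needed (the computation of coproducts of commutative algebras as tensor products, cf.\ \cite{HA}), rather than formal unwinding of definitions.
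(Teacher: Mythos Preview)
Your proposal is correct and follows essentially the same route as the paper: the same cycle (i)$\Rightarrow$(iii)$\Rightarrow$(iv)$\Rightarrow$(i), the same (i)$\Leftrightarrow$(ii) via conservativity of $\calc\to\N(\Ho(\calc))$, and the same key input from \cite{HA} that coproducts of commutative algebras are tensor products. The only difference is organizational: you isolate the ``Moreover'' statement and prove it first (using only conservativity of $\Mon(\calc)\to\calc$), whereas the paper establishes it inside the implication (iv)$\Rightarrow$(i) and phrases the lift to $\Mon(\calc)$ via the full faithfulness granted by (iv)---your arrangement is slightly cleaner since it makes clear that preadditivity of $\Mon(\calc)$ needs only finite products in $\calc$.
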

\begin{proof}
Let us begin by proving that the first two statements are equivalent. The direction \ref{p1}$\Rightarrow $\ref{p2} follows from the fact that the functor $\gamma\colon\calc\to \mathrm{N}(\Ho(\calc))$ preserves finite (co)products. For the converse direction, let us recall that a morphism in~$\calc$ is an equivalence if and only if~$\gamma$ sends it to an isomorphism. Now, by our assumption on $\Ho(\calc)$, the canonical map $C_1\sqcup C_2\to C_1\times C_2$ in~$\calc$ is mapped to an isomorphism under~$\gamma$ and is hence an equivalence. 

To show \ref{p1} $\Rightarrow$ \ref{p3} we only need to check that the symmetric monoidal structure $\calc^{\sqcup} \to \NFin$ exhibits finite tensor products (in this case the disjoint union) as products. But this follows directly from \ref{p1}.

Now assume \ref{p3} holds. Then by \cite[Corollary~2.4.1.8]{HA} there exists an equivalence of symmetric monoidal structures $\calc^\sqcup \simeq \calc^{\times}$. Thus we get an induced equivalence
\begin{equation*}
\Mon(\calc) \simeq \Alg_{\bbE_\infty}(\calc^{\times})  \simeq \Alg_{\bbE_\infty}(\calc^{\sqcup}) .
\end{equation*}
compatible with the forgetful functors to~$\calc$. But for the latter symmetric monoidal structure the forgetful functor $\Alg_{\bbE_\infty}(\calc^{\sqcup}) \to \calc$ always induces an equivalence, as shown in \cite[Corollary~2.4.3.10]{HA}.

Finally, assume \ref{p4} holds. Then in order to show that $\calc$ is preadditive it suffices to show that $\Mon(\calc)$ is preadditive. To see that $\Mon(\calc)$ is preadditive we note that 
limits in $\Mon(\calc)$ are formed as the limits of the underlying objects of $\calc$. In particular, the underlying object of the product in $\Mon(\calc)$ is given by the product of the underlying objects.
Coproducts are more complicated, but it is shown in \cite[Proposition~3.2.4.7]{HA} that the underlying object of the coproduct is formed by the tensor product of the underlying objects, i.e., by the product 
of the underlying objects in our case. Thus, the underlying object of the coproduct and the product are equivalent. But, by assumption, $\Mon(\calc)\to\calc$ is fully-faithful, so that we already have such an equivalence in~$\Mon(\calc)$. This implies~\ref{p1} and concludes the proof.
\end{proof}

\begin{cor}\label{cor:pipre}
Let~$\calc$ and $\cald$ be $\infty$-categories with finite products and suppose that either $\calc$ or $\cald$ is preadditive. Then the $\infty$-category $\Fun^\Pi(\calc,\cald)$ is preadditive.
\end{cor}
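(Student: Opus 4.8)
The plan is to reduce the statement to the identity $\Mon\big(\Fun^\Pi(\calc,\cald)\big)\simeq\Fun^\Pi\big(\calc,\Mon(\cald)\big)$ of \autoref{lem:algR} together with the characterization of preadditivity in \autoref{pro:pre}. First I claim that it is enough to exhibit an equivalence of $\infty$-categories $\Fun^\Pi(\calc,\cald)\simeq\Fun^\Pi\big(\calc,\Mon(\cald)\big)$. Indeed, granting this, \autoref{lem:algR} identifies the right-hand side with $\Mon\big(\Fun^\Pi(\calc,\cald)\big)$; since $\Fun^\Pi(\calc,\cald)$ has finite products, formed pointwise (\autoref{lem:algR}), the last clause of \autoref{pro:pre} shows that $\Mon\big(\Fun^\Pi(\calc,\cald)\big)$ is preadditive, and hence so is the equivalent $\infty$-category $\Fun^\Pi(\calc,\cald)$, preadditivity being invariant under equivalence.

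If $\cald$ is preadditive, the required equivalence is immediate: by \autoref{pro:pre} the forgetful functor $\Mon(\cald)\to\cald$ is already an equivalence, so postcomposition with it identifies $\Fun^\Pi\big(\calc,\Mon(\cald)\big)$ with $\Fun^\Pi(\calc,\cald)$.

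Now suppose instead that $\calc$ is preadditive. By \autoref{pro:pre} the forgetful functor $\Mon(\calc)\to\calc$ is an equivalence; let $\sigma\colon\calc\to\Mon(\calc)$ denote its inverse, so that $\sigma(C)$ carries the canonical $\Eoo$-monoid structure on $C$ whose multiplication is the fold map of the biproduct $C\oplus C$. For a product preserving functor $G\colon\calc\to\cald$ the composite $\calc\xrightarrow{\ \sigma\ }\Mon(\calc)\xrightarrow{\ \Mon(G)\ }\Mon(\cald)$ is again product preserving and has $G$ as its underlying functor; this assignment defines a section of postcomposition with the forgetful functor $U\colon\Mon(\cald)\to\cald$. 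The one point that is not purely formal is to check that this section is also a left inverse, i.e.\ that a product preserving functor $F\colon\calc\to\Mon(\cald)$ is determined by its underlying functor $U\!\circ\!F$ together with the biproducts of $\calc$. Because $F$ preserves finite products and $\calc$ is preadditive, so that $C\oplus C = C\times C$, this reduces to the Eckmann--Hilton type statement that for an $\Eoo$-monoid $N$ in any $\infty$-category with finite products the fold map $N\oplus N\to N$ of the biproduct in $\Mon(\cald)$ has underlying morphism the multiplication $N\times N\to N$; this in turn follows from the description of coproducts of $\Eoo$-monoids as tensor products, \cite[Proposition~3.2.4.7]{HA}, exactly as used in the proof of \autoref{pro:pre}.

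I expect this identification of the two $\Eoo$-monoid structures on $F(C)$ to be the only step that needs a genuine argument; everything else is formal bookkeeping with \autoref{lem:algR} and \autoref{pro:pre}. (Alternatively, in the $\calc$-preadditive case one may observe that the equivalence $\Fun^\Pi(\calc,\cald)\simeq\Fun^\Pi\big(\calc,\Mon(\cald)\big)$ just produced, together with the fact that $\Mon(\cald)$ is itself preadditive by \autoref{pro:pre}, reduces matters to the already-treated case in which the target is preadditive.)
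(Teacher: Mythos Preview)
Your argument is correct and close in spirit to the paper's, but the organization differs in a way worth noting. The paper treats the case ``$\cald$ preadditive'' by a direct check: it observes that $\Fun(\calc,\cald)$ is preadditive and then argues by hand that the pointwise product of two product-preserving functors is again product-preserving and serves as a coproduct in $\Fun^\Pi(\calc,\cald)$ (and similarly for the zero object). You instead route this case through \autoref{lem:algR} and the last clause of \autoref{pro:pre}, which is slicker and more uniform with your overall strategy of exhibiting $\Fun^\Pi(\calc,\cald)\simeq\Mon\big(\Fun^\Pi(\calc,\cald)\big)$.

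For the case ``$\calc$ preadditive'' the two proofs are essentially the same. The paper constructs the equivalence $\Fun^\Pi(\calc,\cald)\simeq\Fun^\Pi(\Mon(\calc),\Mon(\cald))$ and then invokes the already-established $\cald$-preadditive case (with $\Mon(\cald)$ as target); your equivalence $\Fun^\Pi(\calc,\cald)\simeq\Fun^\Pi(\calc,\Mon(\cald))$ is the same thing after precomposing with the equivalence $\sigma\colon\calc\simeq\Mon(\calc)$. The step you flag as needing a genuine argument --- that the section is also a left inverse, via the identification of the fold map in $\Mon(\cald)$ with the underlying multiplication --- is exactly what the paper hides behind ``It is easy to check that the resulting endofunctor \ldots is equivalent to the identity, as is also the case for the other composition.'' Both proofs ultimately rest on the same Eckmann--Hilton input from \cite[Proposition~3.2.4.7]{HA}.
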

\begin{proof}
If $\cald$ is preadditive, then~$\Fun(\calc,\cald)$ is also preadditive, and clearly $\Fun^\Pi(\calc,\cald)\subseteq \Fun(\calc,\cald)$ is stable under products. In particular, given two product preserving functors $f,g:\calc\to\cald$, the pointwise product $f \times  g\colon \calc\to\cald$ again lies in~$\Fun^\Pi(\calc,\cald)$. Since (co)limits in~$\Fun(\calc,\cald)$ are calculated pointwise (\cite[Corollary~5.1.2.3]{HTT}), we can use the preadditivity of~$\cald$ to conclude that $f\times g$ is also the coproduct~$f\sqcup g$ of~$f$ and~$g$ in~$\Fun(\calc,\cald)$, and hence, a posteriori, also the coproduct in~$\Fun^\Pi(\calc,\cald)$. A similar reasoning yields a zero object in~$\Fun^\Pi(\calc,\cald)$, and we conclude that $\Fun^\Pi(\calc,\cald)$ is preadditive. 

The case in which $\calc$ is preadditive is slightly more involved. Recall that a product preserving functor $f\colon\calc\to\cald$ induces a functor $\Mon(\calc)\to\Mon(\cald)$ (simply by composing a special $\Gamma$-object in~$\calc$ with~$f$). Since products in $\infty$-categories of $\Eoo$-monoids are calculated in the underlying $\infty$-categories, this induced functor preserves products. Thus, we obtain a functor
\[
\Fun^\Pi(\calc,\cald)\to\Fun^\Pi(\Mon(\calc),\Mon(\cald)).
\]
By \autoref{pro:pre} we know that $\Mon(\cald)$ is preadditive. The first part of this proof implies the same for $\Fun^\Pi(\Mon(\calc),\Mon(\cald))$, and hence we are done if we can show that the above functor is an equivalence. A functor in the reverse direction is given by composition with the equivalence $\calc\simeq\Mon(\calc)$ (use \autoref{pro:pre} again) and with $\Mon(\cald)\to\cald.$ It is easy to check that the resulting endofunctor of $\Fun^\Pi(\calc,\cald)$ is equivalent to the identity, as is also the case for the other composition.
\end{proof}

\begin{cor}\label{cor:colmon}
Let $\calc$ be an $\infty$-category with finite products and let $\cald$ be a preadditive $\infty$-category. 
\begin{enumerate}
\item The $\infty$-category $\Mon(\calc)$ is preadditive.
\item The forgetful functor $\Mon(\Mon(\calc)) \to \Mon(\calc)$ is an equivalence.
\item There is an equivalence $\Fun^\Pi(\cald, \Mon(\calc)) \simeq \Fun^\Pi(\cald,\calc).$
\end{enumerate}
\end{cor}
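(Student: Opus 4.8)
The plan is to read off all three statements from \autoref{pro:pre}, \autoref{lem:algR}, and \autoref{cor:pipre}; the substantive work has already been done in those results, so what remains is essentially bookkeeping of hypotheses. Part (1) is nothing but the last sentence of \autoref{pro:pre}: since $\calc$ has finite products, $\Mon(\calc)$ is preadditive.

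For part (2) I would apply the equivalence \ref{p1}$\Leftrightarrow$\ref{p4} of \autoref{pro:pre}, but with $\Mon(\calc)$ playing the role of $\calc$. This is legitimate because, by part (1), $\Mon(\calc)$ is preadditive and hence in particular has finite coproducts and finite products, which are the standing hypotheses of that proposition; the clause \ref{p1}$\Leftrightarrow$\ref{p4} then says precisely that the forgetful functor $\Mon(\Mon(\calc)) \to \Mon(\calc)$ is an equivalence. For part (3) the argument is a two-step composition. First, \autoref{lem:algR}, applied with the roles of the two categories taken by our $\cald$ and our $\calc$ (both of which have finite products, $\cald$ because it is preadditive and $\calc$ by hypothesis), gives a canonical equivalence
\[
\Fun^\Pi\big(\cald, \Mon(\calc)\big) \simeq \Mon\big(\Fun^\Pi(\cald, \calc)\big).
\]
Second, since $\cald$ is preadditive, \autoref{cor:pipre} tells us that $\Fun^\Pi(\cald,\calc)$ is preadditive, so it too has finite coproducts and products, and the implication \ref{p1}$\Rightarrow$\ref{p4} of \autoref{pro:pre} applies to it, making $\Mon(\Fun^\Pi(\cald,\calc)) \to \Fun^\Pi(\cald,\calc)$ an equivalence. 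Composing the two equivalences produces $\Fun^\Pi(\cald, \Mon(\calc)) \simeq \Fun^\Pi(\cald,\calc)$, as desired.

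There is no genuine obstacle here; the one point that needs attention is that in both (2) and (3) we invoke the \ref{p1}$\Leftrightarrow$\ref{p4} clause of \autoref{pro:pre} for a category for which only \emph{finite products} are assumed explicitly but which the clause also requires to have finite coproducts --- and in each case this is supplied for free, since the category in question ($\Mon(\calc)$, resp.\ $\Fun^\Pi(\cald,\calc)$) is already known to be preadditive (from part (1) of this corollary, resp.\ from \autoref{cor:pipre}). It is also worth recording, though it costs nothing, that the equivalence in (3) is compatible with the evaluation-to-$\Mon(\calc)$ and evaluation-to-$\calc$ functors, since the equivalence of \autoref{lem:algR} is natural and compatible with the forgetful functors.
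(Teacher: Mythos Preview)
Your proof is correct and follows essentially the same route as the paper: part~(1) is the ``moreover'' clause of \autoref{pro:pre}, part~(2) is the implication \ref{p1}$\Rightarrow$\ref{p4} of \autoref{pro:pre} applied to $\Mon(\calc)$, and part~(3) combines \autoref{lem:algR} with \autoref{cor:pipre} and \autoref{pro:pre}. Your additional care in verifying that the finite-coproduct hypothesis of \autoref{pro:pre} is met (via preadditivity) in each invocation is a welcome explicitness that the paper leaves implicit.
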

\begin{proof}
The first assertion is a consequence of the proof of \autoref{pro:pre}. The second follows immediately from that same proposition, while the last statement is implied by \autoref{lem:algR} and the observation that $\Fun^\Pi(\cald,\calc)$ is preadditive whenever~$\cald$ is as guaranteed by \autoref{cor:pipre}.
\end{proof}

We now establish basically the analogous results for additive $\infty$-categories. As it is very similar to the case of preadditive $\infty$-categories, we leave out some of the details. Parallel to ordinary category theory, we introduce additive $\infty$-categories by imposing an additional exactness condition on preadditive~$\infty$-categories. Let~$\calc$ be a preadditive $\infty$-category and let~$A$ be an object of~$\calc$. We know from \autoref{pro:pre} that~$A$ can be canonically endowed with the structure of an $\Eoo$-monoid, and it is shown in \cite[Section~2.4.3]{HA} that this structure is given by the fold map $\nabla\colon A\oplus A\to A$. The {\em shear map}
\[s\colon A\oplus A\to A\oplus A\]
is the projection $pr_1\colon A\oplus A\to A$ on the first factor and the fold map $\nabla\colon A\oplus A\to A$ on the second. 

\begin{defn}
A preadditive $\infty$-category~$\calc$ is \emph{additive} if, for every object~$A\in\calc$, the shear map~$s\colon A\oplus A\stackrel{\sim}{\to} A\oplus A$ is an equivalence.
\end{defn}

\begin{egs}\label{egs:add}
An ordinary category~$\calc$ is additive if and only if $\N(\calc)$ is an additive $\infty$-category. Products and opposites of additive $\infty$-categories are additive. If $\calc$ is an additive $\infty$-category, then any $\infty$-category equivalent to $\calc$ is additive $\infty$-category, and any functor $\infty$-category $\Fun(K,\calc)$ is additive.
\end{egs}

The connection to $\Eoo$-groups and hence to \S\ref{sec:monoids} is provided by the following analog of \autoref{pro:pre}.

\begin{prop}\label{pro:add}
For an $\infty$-category~$\calc$ with finite products and coproducts, the following are equivalent:
\begin{enumerate}
\item\label{pp1} The $\infty$-category $\calc$ is additive.
\item\label{pp2} The homotopy category $\Ho(\calc)$ is additive.
\item\label{pp3} The forgetful functor $\Alggp(\calc) \to \calc$ is an equivalence.
\end{enumerate}
Moreover, if $\calc$ is an $\infty$-category with finite products, then $\Alggp(\calc)$ is additive.
\end{prop}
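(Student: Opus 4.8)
The plan is to mimic the proof of \autoref{pro:pre}, handling the equivalences \ref{pp1}$\Leftrightarrow$\ref{pp2} and \ref{pp1}$\Leftrightarrow$\ref{pp3} in turn, and then deducing the final sentence. First I would dispatch \ref{pp1}$\Leftrightarrow$\ref{pp2}: both additivity of $\calc$ and additivity of $\Ho(\calc)$ presuppose preadditivity, which by \autoref{pro:pre} is detected on the homotopy category, so we may assume $\calc$ is preadditive. The shear map $s\colon A\oplus A\to A\oplus A$ is built from the projection and the fold map, both of which are preserved by the conservative, product-preserving functor $\gamma\colon\calc\to\N(\Ho(\calc))$; since $\gamma$ also sends the biproduct $A\oplus A$ in $\calc$ to the biproduct in $\Ho(\calc)$, $\gamma(s)$ is exactly the shear map of $\gamma(A)$ in $\Ho(\calc)$. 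Conservativity of $\gamma$ then gives that $s$ is an equivalence in $\calc$ iff it is an isomorphism in $\Ho(\calc)$, which is exactly the condition that $\Ho(\calc)$ is additive.

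Next I would treat \ref{pp1}$\Leftrightarrow$\ref{pp3}. The forgetful functor $\Alggp(\calc)\to\calc$ factors through $\Mon(\calc)\to\calc$, so if $\calc$ is additive it is in particular preadditive, hence by \autoref{pro:pre} \ref{p4} the functor $\Mon(\calc)\to\calc$ is an equivalence; under this identification every object of $\calc$ is canonically an $\Eoo$-monoid with multiplication the fold map, and the grouplike condition of \autoref{prop:gpl}\,\ref{p3} is precisely the demand that the shear map be an equivalence — i.e.\ exactly additivity. So additivity of $\calc$ forces every object to be grouplike, giving $\Alggp(\calc)\simeq\Mon(\calc)\simeq\calc$. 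Conversely, if $\Alggp(\calc)\to\calc$ is an equivalence, then since it factors through the (always fully faithful, by \autoref{pro:pre}) functor $\Grp(\calc)\subseteq\Mon(\calc)\to\calc$, both $\Grp(\calc)\to\Mon(\calc)$ and $\Mon(\calc)\to\calc$ are equivalences; the latter gives preadditivity of $\calc$ via \autoref{pro:pre}, and the former says every $\Eoo$-monoid in $\calc$ is a group, which applied to the canonical monoid structure on each object $A$ (the fold map) gives that the shear map is an equivalence, i.e.\ $\calc$ is additive.

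For the final assertion that $\Alggp(\calc)$ is additive when $\calc$ has finite products, I would argue as in the preadditive case: by \autoref{cor:colmon}\,(1) the category $\Mon(\calc)$ is preadditive, and its full subcategory $\Grp(\calc)=\Alggp(\calc)$ is closed under the relevant finite (co)limits — products are computed on underlying objects and a product of groups is a group, while the zero object and biproducts are likewise grouplike — so $\Alggp(\calc)$ is itself preadditive. To upgrade to additivity, apply the criterion \ref{pp3}: we must show $\Alggp(\Alggp(\calc))\to\Alggp(\calc)$ is an equivalence. But $\Alggp(\calc)$ is preadditive, so by \autoref{pro:pre}\,\ref{p4} every $\Eoo$-monoid in it is equivalent to its underlying object, and the shear map on such an object — which in a preadditive category is the fold map $\nabla$ in the second coordinate — is an equivalence precisely because in $\Alggp(\calc)$ each underlying $\Eoo$-monoid is already a group, so the corresponding shear map was assumed invertible. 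Hence $\Mon(\Alggp(\calc))\simeq\Alggp(\calc)$ and moreover every such monoid is grouplike, giving $\Alggp(\Alggp(\calc))\simeq\Alggp(\calc)$ and thus additivity.

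The main obstacle I anticipate is bookkeeping around the identification of the canonical $\Eoo$-monoid structure on an object of a preadditive category with the fold map, and keeping straight which shear map (the ``multiplication'' shear of \autoref{prop:gpl}\,\ref{p3} versus the ``fold'' shear of the definition of additive) one is invoking at each step; these coincide exactly because of the computation in \cite[Section~2.4.3]{HA}, but the argument must cite this carefully to avoid circularity. Everything else is a routine transfer of the preadditive arguments, so I would keep the writeup terse, as the authors indicate they intend to.
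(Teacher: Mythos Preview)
Your arguments for \ref{pp1}$\Leftrightarrow$\ref{pp2} and \ref{pp1}$\Rightarrow$\ref{pp3} are correct and match the paper. The gap is in \ref{pp3}$\Rightarrow$\ref{pp1}: from the factorization $\Grp(\calc)\hookrightarrow\Mon(\calc)\to\calc$ with the first map fully faithful and the composite an equivalence, it does \emph{not} follow that both factors are equivalences (for a trivial counterexample take $\{*\}\hookrightarrow\{0,1\}\to\{*\}$). Nor does \autoref{pro:pre} assert that $\Mon(\calc)\to\calc$ is fully faithful in general --- it is not, e.g.\ for $\calc=\Set$.

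The paper avoids this by reversing the order of your last two steps: it proves the ``Moreover'' --- that $\Grp(\calc)$ is always additive --- directly, and then \ref{pp3}$\Rightarrow$\ref{pp1} is immediate because $\calc\simeq\Grp(\calc)$. Your final paragraph in fact already contains this direct argument: once you know $\Grp(\calc)$ is preadditive and that the shear map on $M\oplus M$ in $\Grp(\calc)$ has as underlying map the shear map of \autoref{prop:gpl} witnessing $M$ as a group in $\calc$ (hence is invertible), you have verified the \emph{definition} of additivity for $\Grp(\calc)$. There is no need to route through criterion~\ref{pp3} at that point, and doing so is precisely what creates a circularity: you would be invoking \ref{pp3}$\Rightarrow$\ref{pp1}, whose only correct proof uses what you are in the process of establishing. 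Reorganize so that the additivity of $\Grp(\calc)$ comes first; then both \ref{pp3}$\Rightarrow$\ref{pp1} and the ``Moreover'' follow at once, exactly as the paper does.
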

\begin{proof}
The proof of the equivalence of~\ref{p1} and~\ref{pp3} is parallel to the proof of \autoref{pro:pre}. To see that~\ref{pp1} implies~\ref{pp3} we note that by \autoref{pro:pre} 
we have an equivalence $\Alg_{\bbE_\infty}(\calc^{\sqcup}) \simeq\Mon(\calc)\to\calc$. But it is shown in \cite[Section~2.4.3]{HA} that an inverse to this equivalence endows an object~$A\in\calc$ with 
the algebra structure given by the fold map $\nabla\colon A\oplus A\to A$. Now, the statement that such an algebra object is grouplike is equivalent to the shear map being an equivalence.
 Thus, invoking~\ref{pp1}, we obtain an equivalence $\Mon(\calc)\simeq\Alggp(\calc)$, which gives~\ref{pp3}. Conversely, to see that~\ref{pp3} implies~\ref{pp1}, we need to show that $\Grp(\calc)$ is additive. Preadditivity is clear and additivity follows from the characterization of groups given in~ \autoref{prop:gpl}.
\end{proof}

\begin{cor}\label{cor:piadd}
Let~$\calc$ and $\cald$ be $\infty$-categories with finite products and suppose that either $\calc$ or $\cald$ is additive. Then the $\infty$-category $\Fun^\Pi(\calc,\cald)$ is additive.
\end{cor}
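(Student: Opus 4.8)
The plan is to mimic the proof of \autoref{cor:pipre} almost verbatim, replacing ``preadditive'' by ``additive'' and ``$\Mon$'' by ``$\Grp$'' throughout. First I would treat the case that $\cald$ is additive: by \autoref{egs:add}, the functor $\infty$-category $\Fun(\calc,\cald)$ is additive, and $\Fun^\Pi(\calc,\cald)\subseteq\Fun(\calc,\cald)$ is closed under finite products (and contains the zero object), since (co)limits in $\Fun(\calc,\cald)$ are computed pointwise by \cite[Corollary~5.1.2.3]{HTT}; hence for $f,g\in\Fun^\Pi(\calc,\cald)$ the pointwise biproduct $f\oplus g$ is again product preserving, and it is simultaneously the product and the coproduct in $\Fun^\Pi(\calc,\cald)$. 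Preadditivity then already follows from \autoref{cor:pipre}, and the shear map $s\colon f\oplus f\to f\oplus f$ in $\Fun^\Pi(\calc,\cald)$ is computed pointwise and hence is a pointwise equivalence because $\cald$ is additive; an equivalence in $\Fun(\calc,\cald)$ detected pointwise is an equivalence, so $s$ is an equivalence and $\Fun^\Pi(\calc,\cald)$ is additive.

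Next I would handle the case that $\calc$ is additive. Here I would follow the strategy of the second half of the proof of \autoref{cor:pipre}: a product preserving functor $f\colon\calc\to\cald$ induces a product preserving functor $\Grp(\calc)\to\Grp(\cald)$ by postcomposing a (very) special $\Gamma$-object with $f$ (using that products of $\Eoo$-groups are computed underlying, and that $f$ preserves the inversion data since it preserves finite products), giving a functor
\[
\Fun^\Pi(\calc,\cald)\to\Fun^\Pi(\Grp(\calc),\Grp(\cald)).
\]
By \autoref{pro:add}, $\Grp(\cald)$ is additive, so by the first part of this proof $\Fun^\Pi(\Grp(\calc),\Grp(\cald))$ is additive; it therefore suffices to show the displayed functor is an equivalence. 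Since $\calc$ is additive, \autoref{pro:add} gives an equivalence $\calc\simeq\Grp(\calc)$, and a quasi-inverse to the displayed functor is given by precomposition with this equivalence and postcomposition with the forgetful functor $\Grp(\cald)\to\cald$; one checks, exactly as in \autoref{cor:pipre}, that both composites are equivalent to the respective identities.

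The only genuinely new point compared with \autoref{cor:pipre} is that we must know $\calc$ additive implies $\calc\simeq\Grp(\calc)$, which is precisely \autoref{pro:add}, and that $\Grp(\cald)$ is always additive, which is the last sentence of \autoref{pro:add}. I expect the main (mild) obstacle to be checking that the displayed induced functor really is an equivalence of $\infty$-categories rather than merely essentially surjective or fully faithful; but this is routine given that $\calc\to\Grp(\calc)$ is an equivalence and $\Grp(\cald)\to\cald$ is fully faithful with the groups as essential image, so that the two roundtrip composites on $\Fun^\Pi$ are readily identified with the identity up to natural equivalence, exactly as in the preadditive case. Hence the proof can legitimately be compressed to ``the proof is parallel to that of \autoref{cor:pipre}, using \autoref{pro:add} in place of \autoref{pro:pre} and \autoref{egs:add} in place of the corresponding example'' — which is presumably what the authors do.
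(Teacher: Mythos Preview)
Your proposal is correct and matches the paper's approach exactly: the paper gives no explicit proof of \autoref{cor:piadd}, implicitly leaving it as the evident analogue of \autoref{cor:pipre} with \autoref{pro:add} and \autoref{egs:add} replacing \autoref{pro:pre} and its preceding example. One small correction to your closing commentary: the forgetful functor $\Grp(\cald)\to\cald$ is \emph{not} fully faithful in general (consider $\Ab\to\Set$), but this is not actually used in the argument --- the check that both composites are equivalent to the identity works exactly as in \autoref{cor:pipre}, without any such assumption.
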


\begin{cor}\label{cor:colgrp}
Let $\calc$ be an $\infty$-category with finite products and let $\cald$ be an additive $\infty$-category. 
\begin{enumerate}
\item The $\infty$-category $\Grp(\calc)$ is additive.
\item The forgetful functor $\Grp(\Grp(\calc)) \to \Grp(\calc)$ is an equivalence.
\item There is an equivalence $\Fun^\Pi(\cald, \Grp(\calc)) \simeq \Fun^\Pi(\cald,\calc).$
\end{enumerate}
\end{cor}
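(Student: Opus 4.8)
The plan is to mirror the proof of \autoref{cor:colmon}, replacing \autoref{pro:pre} and \autoref{cor:pipre} by their additive counterparts \autoref{pro:add} and \autoref{cor:piadd}. For the first assertion I would simply invoke the last sentence of \autoref{pro:add}: if $\calc$ has finite products, then $\Grp(\calc)=\Alggp(\calc)$ is additive. In particular $\Grp(\calc)$ is then preadditive, hence has finite products and finite coproducts, a fact needed to make sense of the remaining two assertions.

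For the second assertion I would apply the equivalence of conditions \ref{pp1} and \ref{pp3} in \autoref{pro:add} to the $\infty$-category $\Grp(\calc)$: it has finite products and coproducts by the previous remark, and it is additive by the first assertion, so condition \ref{pp3} holds for it. That condition says precisely that the forgetful functor $\Grp(\Grp(\calc))\to\Grp(\calc)$ is an equivalence.

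For the third assertion I would combine \autoref{lem:algR} with \autoref{cor:piadd}. \autoref{lem:algR} gives a canonical equivalence $\Grp\big(\Fun^\Pi(\cald,\calc)\big)\simeq\Fun^\Pi\big(\cald,\Grp(\calc)\big)$ compatible with the forgetful functors down to $\Fun^\Pi(\cald,\calc)$. By \autoref{cor:piadd} the $\infty$-category $\Fun^\Pi(\cald,\calc)$ is additive since $\cald$ is, so by condition \ref{pp3} of \autoref{pro:add} the forgetful functor $\Grp\big(\Fun^\Pi(\cald,\calc)\big)\to\Fun^\Pi(\cald,\calc)$ is an equivalence. Composing these two equivalences produces the desired equivalence $\Fun^\Pi(\cald,\Grp(\calc))\simeq\Fun^\Pi(\cald,\calc)$.

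There is no genuine obstacle here: no input is needed beyond \autoref{pro:add}, \autoref{cor:piadd}, and \autoref{lem:algR}. The only point demanding a little care is the bookkeeping of the ambient finiteness hypotheses — namely, checking that $\Grp(\calc)$ and $\Fun^\Pi(\cald,\calc)$ possess the finite products and coproducts required to invoke \autoref{pro:add} — which holds because $\Grp$ of an $\infty$-category with finite products is automatically preadditive.
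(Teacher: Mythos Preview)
Your proposal is correct and is exactly the approach the paper intends: the paper does not give a separate proof of \autoref{cor:colgrp}, treating it as the evident additive analog of \autoref{cor:colmon}, whose proof invokes precisely the preadditive versions of the three inputs you name (\autoref{pro:pre}, \autoref{lem:algR}, and \autoref{cor:pipre}). Your attention to the finiteness hypotheses needed to apply \autoref{pro:add} is a point the paper leaves implicit but which is handled just as you say.
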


\begin{rmk}
\autoref{cor:colmon} and \autoref{cor:colgrp} basically state that $\Mon(-)$ and $\Alggp(-)$ are colocalizations of the $\infty$-category of $\infty$-categories with finite products and product preserving functors. Much of the remainder of the paper makes use of this observation, although we prefer to phrase things slightly differently: namely, $\Mon(-)$ and $\Grp(-)$ also induce colocalizations of~$\Prr$, which in turn (using the anti-equivalence between~$\Prl$ and $\Prr$) induce localizations of~$\Prl$. We have opted to state our results in term of localizations as we think they are slightly more intuitive from this perspective.
\end{rmk}

\section{Smashing localizations}\label{sec:smash}

So far we have discussed $\infty$-categories with finite products. We now turn our attention to presentable $\infty$-categories. The primary purpose of this section is to review the notion of \emph{smashing localizations}, which we then specialize to $\Prlo$ in order to deduce some important consequences which will play an essential role throughout the remainder of the paper.

Let $\calc$ be an $\infty$-category.
Recall that a {\em localization} of $\calc$ is functor $L\colon\calc\to\cald$ which admits a fully faithful right adjoint $R\colon\cald\to\calc$.
If $L\colon\calc\to\cald$ is a localization, then $\cald$ is equivalent (via the fully faithful right adjoint) to a full subcategory $L\calc$ of $\calc$, called the subcategory of {\em local objects}.
For this reason we typically identify localizations with reflective subcategories (i.e., full subcategories such that the inclusion admits a left adjoint). We will also sometimes write $L$ for the endofunctor of $\calc$ obtained as the composite of $L\colon\calc\to\cald$ followed by the fully faithful right adjoint $R\colon\cald\to\calc$. Given such a localization, a map $X\to Y$ is a \emph{local equivalence} if $LX\to LY$ is an equivalence.

\begin{lem}
Let $\calc$ be an $\infty$-category and $M\colon\calc\to\calc$ an endofunctor equipped with a natural transformation $\eta\colon\id\to M$.
Then $M$ is equivalent to the composite $R\circ L$ of a localization $L\colon\calc\to\cald$ if and only if, for every object $X$ of $\calc$, the two obvious maps $M(X)\to M(M(X))$ are equivalences.
\end{lem}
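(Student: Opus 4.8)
The plan is to prove both directions of the equivalence, with the main work going into the "if" direction (the two maps $M(X)\to M(M(X))$ being equivalences for all $X$ implies $M$ factors through a localization). For the "only if" direction, suppose $M \simeq R\circ L$ with $L\colon\calc\to\cald$ left adjoint to the fully faithful $R$, and that $\eta$ is the unit. The two maps $M(X)\to M(M(X))$ in question are $M(\eta_X)$ and $\eta_{M(X)}$. Since $M(X) = RL(X)$ lies in the essential image of $R$, i.e.\ is a local object, the unit $\eta_{M(X)}$ is an equivalence — this is the standard characterization of local objects for a reflective subcategory (\cite[Proposition~5.2.7.4]{HTT}). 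For $M(\eta_X)$: applying $L$ to $\eta_X\colon X\to RL(X)$ and using the triangle identity, $L(\eta_X)$ is a split monomorphism with a retraction $L(X)\to LRL(X)\to L(X)$ that is an equivalence because $R$ is fully faithful (so $LR\simeq\id_\cald$); hence $L(\eta_X)$ is an equivalence, and applying $R$ shows $M(\eta_X) = RL(\eta_X)$ is an equivalence.

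For the "if" direction, I would take $\cald$ to be the full subcategory of $\calc$ spanned by those objects $Y$ such that $\eta_Y\colon Y\to M(Y)$ is an equivalence, and let $R\colon\cald\hookrightarrow\calc$ be the inclusion. The candidate for $L$ is $M$ itself, viewed as landing in $\cald$: this is legitimate precisely because, by hypothesis, for every $X$ the map $\eta_{M(X)}\colon M(X)\to M(M(X))$ is an equivalence, so $M(X)\in\cald$. The key point is then to verify that $M\colon\calc\to\cald$ is left adjoint to $R$, i.e.\ that for $X\in\calc$ and $Y\in\cald$ the map
\[
\eta_X^*\colon \Map_\cald(M(X),Y)=\Map_\calc(M(X),Y)\longrightarrow\Map_\calc(X,Y)
\]
is an equivalence. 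I expect to deduce this from the two-out-of-three / retraction argument: precomposition with $\eta$ gives a natural transformation $\Map(M(-),Y)\Rightarrow\Map(-,Y)$, and I want to produce an inverse. Given $f\colon X\to Y$, one forms $M(f)\colon M(X)\to M(Y)$ and composes with $\eta_Y^{-1}\colon M(Y)\xrightarrow{\sim}Y$ (an equivalence since $Y\in\cald$) to get a map $M(X)\to Y$; the two hypotheses — $M(\eta_X)$ and $\eta_{M(X)}$ both equivalences, together with naturality of $\eta$ — should show these two constructions are mutually inverse up to coherent homotopy. The cleanest way to package this coherence is probably to invoke Lurie's criterion \cite[Proposition~5.2.7.4]{HTT}: a full subcategory $\cald\subseteq\calc$ is reflective iff for each $X$ there exists a map $X\to Y$ with $Y\in\cald$ that is a "local equivalence" in the appropriate sense; the hypothesis on $M$ supplies exactly such a map, namely $\eta_X\colon X\to M(X)$, and the condition that $M(\eta_X)$ is an equivalence is what certifies it as initial among maps from $X$ into $\cald$.

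The main obstacle, and the step I would be most careful about, is the coherence in the "if" direction: it is not enough to check the adjunction on homotopy categories or to exhibit a bijection on $\pi_0$ of mapping spaces — one needs a genuine equivalence of mapping spaces (or an honest adjunction of $\infty$-categories), and a naive "define a map back by hand" argument risks only giving this up to homotopy rather than coherently. I would circumvent this by reducing to the established machinery: either cite \cite[Proposition~5.2.7.4]{HTT} directly once I have identified $\cald$ and the candidate unit $\eta_X\colon X\to M(X)$ and checked its characterizing property (that $\Map(M(X),Y)\to\Map(X,Y)$ is an equivalence for $Y\in\cald$, which itself follows from $M(\eta_X)$ and $\eta_{M(X)}$ being equivalences by a formal two-out-of-six argument applied to the commuting triangle $X\to M(X)\to M(M(X))$ together with $M$ applied to it), or observe that $\cald$ is the full subcategory of $M$-local objects and that $\eta$ exhibits $M$ as a localization functor in the sense of \cite[\S5.5.4]{HTT}. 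Once the reflective subcategory $\cald$ is in hand, $R$ is automatically fully faithful and $M \simeq R\circ L$, completing the proof.
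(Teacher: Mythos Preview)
Your proposal is correct, and in the end you invoke exactly the result the paper uses: the paper's entire proof is the single sentence ``This is condition (3) of \cite[Proposition~5.2.7.4]{HTT}.'' All of the unpacking you do---the explicit description of $\cald$ as the full subcategory on which $\eta$ is an equivalence, the verification that $M$ lands there, the adjunction argument, and the coherence concern you flag---is essentially a sketch of how one would reprove that proposition, and you rightly observe that the cleanest resolution of the coherence issue is simply to cite it.
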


\begin{proof}
This is condition (3) of \cite[Proposition 5.2.7.4]{HTT}.
\end{proof}

If $\calc$ has a symmetric monoidal structure $\calc^\otimes$, then it is sometimes the case that a localization of $\calc$ is given by `smashing' with a fixed object $I$ of $\calc$. In keeping with the terminology used in stable homotopy theory, we make the following definition.

\begin{defn}
Let $\calc^\otimes$ be a symmetric monoidal $\infty$-category. We  say that a localization $L\colon\calc\to\calc$ is {\em smashing} if it is of the form $L\simeq (-)\otimes I$ for some object~$I$ of $\calc$.
\end{defn}

Recall from \cite[Definition~6.3.2.1]{HA} that an \emph{idempotent object} in~$\calc^\otimes$ is an object~$I$ together with a morphism from the tensor unit such that the two obvious maps $I\to I\otimes I$ are equivalences. It follows that the endofunctor of $\calc$ given by tensoring with $I$ is a localization \cite[Proposition 6.3.2.4]{HA}.
Conversely for a smashing localization $L\simeq (-)\otimes I$ the object $I$ is necessarily an idempotent commutative algebra object of $\calc$.
In other words, showing that the functor $(-)\otimes I$ is a localization is the same as endowing $I$ with the structure of an idempotent commutative algebra object of $\calc$. This provides a one-to-one correspondence between smashing localizations and idempotent commutative algebra objects.

There are two obvious key features of smashing localizations: first, they preserve colimits (provided the tensor structure is compatible with colimits, which is always the case if it is closed), and second, they are symmetric monoidal in the sense of the following definition.

\begin{defn}
Let $\calc^\otimes$ be a symmetric monoidal $\infty$-category equipped with a localization $L\colon\calc\to\cald$ of the underlying $\infty$-category $\calc$. Then $L$ is {\em compatible with the symmetric monoidal structure} (or simply \emph{symmetric monoidal}) if, whenever $X\to Y$ is a local equivalence, then so is $X\otimes Z\to Y\otimes Z$  for any object $Z$ of $\calc$.
\end{defn}

Given such a localization, the subcategory $\cald\simeq L\calc$ of local objects inherits a symmetric monoidal structure from that of $\calc$. This is the content of the following lemma which also justifies the terminology \emph{symmetric monoidal} localization.
Identifying $\cald$ with the full subcategory $L\calc$ of local objects, let $R^\otimes\colon\cald^\otimes\subseteq\calc^\otimes$ be the inclusion of the full subcategory consisting of those objects $X_1\oplus\cdots\oplus X_n$ such that each $X_i$ is in $\cald$.

\begin{lem}\label{prop:symmonloc}
Let $\calc^\otimes$ be a symmetric monoidal $\infty$-category equipped with a symmetric monoidal localization $L\colon\calc\to\cald.$ Then there is a symmetric monoidal structure~$\cald^\otimes$ on $\cald$ such that $L$ extends to a symmetric monoidal functor $L^\otimes\colon\calc^\otimes\to\cald^\otimes$ and such that the right adjoint $R^\otimes\colon\cald^\otimes\to\calc^\otimes$ is lax symmetric monoidal.
\end{lem}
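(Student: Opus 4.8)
The statement is essentially the $\infty$-categorical analogue of the classical fact that a Bousfield localization of a symmetric monoidal category which is compatible with the monoidal structure inherits a symmetric monoidal structure, and this is already available in Lurie's work. The plan is to invoke \cite[Proposition~2.2.1.9]{HA} (localization of symmetric monoidal $\infty$-categories), whose hypothesis is precisely the compatibility condition we have imposed: a localization functor $L\colon\calc\to\calc$ such that $L$-equivalences are stable under tensoring with an arbitrary object. First I would recall that a \emph{symmetric monoidal} localization in the sense of our \autoref{defn} above is exactly an instance of a \emph{compatible} localization in the sense of \cite[Definition~2.2.1.6]{HA}: one takes the collection $S$ of all $L$-local equivalences, notes that $S$ is strongly saturated and of small generation (the localization being accessible since $\calc$ is presentable), and observes that our definition says precisely that $S$ is closed under tensoring with arbitrary objects. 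Then \cite[Proposition~2.2.1.9]{HA} produces a symmetric monoidal $\infty$-category $\cald^\otimes$ — whose underlying $\infty$-category is the full subcategory $\cald\simeq L\calc$ of local objects, with tensor product $X\otimes_\cald Y := L(X\otimes Y)$ — together with a symmetric monoidal functor $L^\otimes\colon\calc^\otimes\to\cald^\otimes$ lifting $L$.

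The second assertion, that the right adjoint $R^\otimes$ is lax symmetric monoidal, is then formal: the inclusion $\cald\hookrightarrow\calc$ is right adjoint to a symmetric monoidal functor, and by \cite[Corollary~7.3.2.7]{HA} (or the general fact that the right adjoint of an oplax/strong symmetric monoidal functor is canonically lax symmetric monoidal — in the operadic language, a morphism of $\infty$-operads $\cald^\otimes\to\calc^\otimes$ covering the identity on $\NFin$) the right adjoint acquires a canonical lax symmetric monoidal structure. Concretely, the lax structure maps $R(X)\otimes R(Y)\to R(X\otimes_\cald Y)$ are adjoint to the equivalences $L(R(X)\otimes R(Y))\simeq R(X)\otimes_\cald R(Y)$ coming from the fact that $L^\otimes$ is strong monoidal. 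The description of $R^\otimes$ as the inclusion of the full subcategory of $\calc^\otimes$ on those tuples $X_1\oplus\cdots\oplus X_n$ with each $X_i$ local is immediate from the construction of the symmetric monoidal structure on $\cald$ as a full subcategory-inherited one, since the fibre of $\cald^\otimes\to\NFin$ over $\langle n\rangle$ is $\cald^n$ viewed inside $\calc^n$.

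I would then remark that in the smashing case $L\simeq(-)\otimes I$ the idempotent commutative algebra structure on $I$ (guaranteed by \cite[Proposition~6.3.2.4]{HA} and the surrounding discussion) gives a second, more hands-on route: $\cald\simeq\Mod_I(\calc)$ as symmetric monoidal $\infty$-categories, with $R^\otimes$ the forgetful functor, which is manifestly lax symmetric monoidal. This identification is not strictly needed for the proof but is worth recording since it is the form in which the result will be applied to $(-)_\ast$, $\Mon$, $\Grp$, and $\Sp$ later in the paper.

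\textbf{Main obstacle.} There is no deep obstacle here; the content is really in checking that our \autoref{defn} of a symmetric monoidal localization matches Lurie's hypotheses and in citing the correct statements. The one point requiring a little care is accessibility of the localization — i.e.\ that the saturated class of local equivalences is generated by a small set — so that \cite[Proposition~2.2.1.9]{HA} applies; this holds because we are working with a localization of a presentable $\infty$-category, where every (accessible) reflective subcategory is a localization at a small set of morphisms by \cite[Proposition~5.5.4.15 and Corollary~5.5.4.20]{HTT}. Everything else is a matter of transporting the classical argument along the dictionary between symmetric monoidal $\infty$-categories and commutative algebra objects of $\Catoo$.
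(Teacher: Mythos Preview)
Your approach is exactly the paper's: the entire proof there is the single sentence ``This is a special case of \cite[Proposition~2.2.1.9]{HA}.'' Your elaboration is fine, but note that your ``main obstacle'' is illusory --- \cite[Proposition~2.2.1.9]{HA} requires no accessibility or presentability hypothesis, and indeed the lemma as stated does not assume $\calc$ is presentable, so there is nothing to check there.
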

\begin{proof}
This is a special case of \cite[Proposition 2.2.1.9]{HA}.
\end{proof}

\begin{rmk}\label{rmk_closed}
If $\calc^\otimes$ is a closed symmetric monoidal $\infty$-category equipped with a symmetric monoidal localization $L\colon \calc \to \calc$. Then $L$ is compatible with the closed structure in the sense that, for every pair of objects $C$ and $D$ of $\calc$, the localization $C\to LC$  induces an equivalence
\[
D^{LC} \simeq D^C
\]
whenever $D$ is local. This follows immediately from the definition.
\end{rmk}

\begin{lem}\label{prop:algloc}
Let $\calc^\otimes$ be a symmetric monoidal $\infty$-category equipped with a symmetric monoidal localization $L\colon\calc\to\cald,$ and let $R\colon\cald\to\calc$ denote the right adjoint of $L$.
Then there is an induced localization $L'\colon\Alg_{\bbE_\infty}(\calc)\to\Alg_{\bbE_\infty}(\cald)$ such that the diagram
\[
\xymatrix{
\Alg_{\bbE_\infty}(\calc)\ar[r]^{L'}\ar[d] & \Alg_{\bbE_\infty}(\cald)\ar[d]\\
\calc\ar[r]^L & \cald}
\]
commutes. Moreover, given $A\in\Alg_{\Eoo}(\calc)$, there exists a unique commutative algebra structure on $RLA$ such that unit map $A\to RLA$ extends to a morphism of commutative algebras.
\end{lem}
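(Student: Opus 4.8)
The plan is to derive everything from \autoref{prop:symmonloc}, which already produces a symmetric monoidal structure $\cald^\otimes$, a strong symmetric monoidal functor $L^\otimes\colon\calc^\otimes\to\cald^\otimes$, and a lax symmetric monoidal right adjoint $R^\otimes\colon\cald^\otimes\to\calc^\otimes$; moreover, as set up just before that lemma, $R^\otimes$ is the inclusion of the full subcategory of $\calc^\otimes$ spanned by the objects $X_1\oplus\dots\oplus X_n$ with every $X_i$ local, so $R^\otimes$ is fully faithful as a functor of $\infty$-categories. Since $\bbE_\infty$-algebras in a symmetric monoidal $\infty$-category are the inert-preserving sections of its underlying $\infty$-operad, post-composition with a lax symmetric monoidal functor induces a functor on $\bbE_\infty$-algebras; applying this to $L^\otimes$ and $R^\otimes$ gives $L'\colon\Alg_{\bbE_\infty}(\calc)\to\Alg_{\bbE_\infty}(\cald)$ and $R'\colon\Alg_{\bbE_\infty}(\cald)\to\Alg_{\bbE_\infty}(\calc)$. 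Both commute with the forgetful functors to $\calc$ and $\cald$ — on underlying objects they are computed by the fibers of $L^\otimes$, $R^\otimes$ over $\one$, namely $L$ and $R$ — so in particular the square in the statement commutes.

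Next I would upgrade the symmetric monoidal adjunction $L^\otimes\dashv R^\otimes$ to an adjunction $L'\dashv R'$. This is the standard fact that a symmetric monoidal adjunction with strong monoidal left adjoint and lax monoidal right adjoint induces an adjunction on commutative algebras, compatibly with the forgetful functors; one can obtain it from the relative adjunction machinery of \cite{HTT} applied to the section $\infty$-categories over $\NFin$, or cite it from \cite{HA} (e.g.\ around \cite[Corollary~7.3.2.7]{HA}). Because $R^\otimes$ is fully faithful and post-composition with a fully faithful functor is fully faithful on $\NFin$-relative functor categories — and this restricts to the full subcategories of $\infty$-operad maps — the functor $R'$ is fully faithful. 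Hence $L'$ admits a fully faithful right adjoint, i.e.\ $L'$ is a localization in the sense of this paper. It is also useful to record that the local objects of $\Alg_{\bbE_\infty}(\calc)$ are precisely the algebras whose underlying object of $\calc$ is $L$-local: the unit $B\to R'L'B$ is an equivalence if and only if its image under the conservative forgetful functor is, and that image is the localization map of the underlying object.

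For the ``moreover'' clause, fix $A\in\Alg_{\bbE_\infty}(\calc)$. By the compatibility of $L'$ and $R'$ with the forgetful functors, the underlying object of $R'L'A$ is $RLA$ and the unit $\eta_A\colon A\to R'L'A$ has underlying morphism the localization map $A\to RLA$; this already exhibits one commutative algebra structure on $RLA$ with the required property. For uniqueness, suppose $B\in\Alg_{\bbE_\infty}(\calc)$ has underlying object $RLA$ and $f\colon A\to B$ is a morphism of commutative algebras whose underlying morphism is the localization map. Then $B$ has $L$-local underlying object, hence is a local object of $\Alg_{\bbE_\infty}(\calc)$ by the previous paragraph, so precomposition with $\eta_A$ induces an equivalence $\Map_{\Alg_{\bbE_\infty}(\calc)}(R'L'A,B)\xrightarrow{\sim}\Map_{\Alg_{\bbE_\infty}(\calc)}(A,B)$ and yields a factorization $f\simeq g\circ\eta_A$. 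The underlying morphism of $g$ is an endomorphism of $RLA$ that composes with the localization map to the localization map; since $RLA$ is $L$-local, precomposition with the localization map is an equivalence $\Map_\calc(RLA,RLA)\xrightarrow{\sim}\Map_\calc(A,RLA)$, so this endomorphism is homotopic to the identity, hence an equivalence, hence $g$ is an equivalence by conservativity of the forgetful functor. Thus $(B,f)$ is equivalent to $(R'L'A,\eta_A)$.

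I expect the only genuine work to be in the middle step: producing the adjunction $L'\dashv R'$ on $\bbE_\infty$-algebras together with all its compatibilities with the forgetful functors to $\calc$ and $\cald$. Once these are in place, both the assertion that $L'$ is a localization and the uniqueness in the ``moreover'' clause follow formally from the full-faithfulness of $R^\otimes$ and the universal property of reflective localizations.
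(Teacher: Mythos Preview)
Your proposal is correct and follows essentially the same route as the paper: both use \autoref{prop:symmonloc} to obtain $L^\otimes$ and $R^\otimes$, define $L'$ and $R'$ by post-composition on sections, deduce that $L'$ is a localization from the full faithfulness of $R^\otimes$ (the paper phrases this as the counit being an equivalence), and then prove the uniqueness clause by factoring any competing $f\colon A\to B$ through the unit $\eta_A$ and using conservativity of the forgetful functor. Your treatment is somewhat more explicit than the paper's---you spell out the characterization of local objects in $\Alg_{\bbE_\infty}(\calc)$ and the argument that the underlying map of $g$ is homotopic to the identity, whereas the paper simply asserts that ``the underlying map of $\phi$ is an identity''---but the substance is the same.
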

\begin{proof}
By \autoref{prop:symmonloc} above, we obtain maps $L'\colon\Alg_{\bbE_\infty}(\calc)\to\Alg_{\bbE_\infty}(\cald)$ and $R'\colon\Alg_{\bbE_\infty}(\cald)\to\Alg_{\bbE_\infty}(\calc)$ by composing sections $\bbE_\infty\to\calc^\otimes$ with $L^\otimes$ and sections $\bbE_\infty\to\cald^\otimes$ with $R^\otimes$, respectively. In a similar fashion we also obtain unit and counit transformations such that the counit is an equivalence. It follows that $L'$ is a localization. 

For the second assertion, we know already that $R'L'A$ comes with a canonical commutative algebra map $\eta'\colon A\to R'L' A$, the adjunction unit evaluated at $A$, and that this map extends the adjunction unit $\eta\colon A\to RLA$ of the underlying objects. If $\eta''\colon A\to R'B$ is a second such map of commutative algebras, then the universality of~$\eta'$ implies that $\eta''$ factors essentially uniquely as 
\[
\phi\circ\eta'\colon A\to R'L'A\to R'B.
\]
Since the underlying map of $\phi$ is an identity, if follows that $\phi$ itself is an equivalence since $\Alg_{\Eoo}(\calc)\to\calc$ is conservative. We can now conclude since the space of reflections of a fixed object in a full subcategory is contractible if non-empty.
\end{proof}

\begin{rmk}
The second part of the lemma implies that $RLA$ can be turned into an $\Eoo$-algebra such that the unit map $A\to RLA$ can be enhanced to a morphism of $\Eoo$-algebras. Moreover, the space of such enhancements is contractible. In particular, if $RLA$ is endowed with two different $\Eoo$-algebra structures, then the identity morphism of the underlying objects in~$\cald$ can be essentially uniquely turned into an equivalence of these two $\Eoo$-algebras compatible with the localizations. We will apply this in \S\ref{sec:spec} to smashing localizations on~$\Prl.$ 
\end{rmk}

\medskip
Now we specialize to the case of the (very large) $\infty$-category $\Prl$ of presentable $\infty$-categories and colimit-preserving functors. We will write $\calc$, $\cald$, etc.\ for objects of $\Prl$. Recall that $\Prl$ admits a closed symmetric monoidal structure which is uniquely characterized as follows: given presentable $\infty$-categories $\calc$ and~$\cald$, their tensor product $\calc\otimes\cald$ corepresents the functor $\Prl\to\widehat{\Cat}_\infty$ which sends $\cale$ to 
\[
\FunLL(\calc\times\cald,\cale)\subseteq\Fun(\calc\times\cald,\cale),
\]
the full subcategory consisting of those functors $F\colon\calc\times\cald\to\cale$ which preserve colimits separately in each variable. The unit of this monoidal structure on $\Prl$ is the $\infty$-category $\cals$ of spaces, as follows from the fact that $\FunL(\cals,\calc)\simeq\calc$ (\cite[Example~6.3.1.19]{HA}). Moreover, by \cite[Proposition~6.3.1.16]{HA} this tensor product admits the following description
\[
\calc\otimes\cald\simeq\FunR(\calc\op,\cald).
\]
Recall that $\FunL(\calc,\cald)$ is presentable (\cite[Propositon~5.5.3.8]{HTT}). It is immediate from the definition of $\calc\otimes\cald$ as a corepresenting object that the symmetric monoidal structure on $\Prl$ is closed, with right adjoint to $\calc\otimes(-):\Prl\to\Prl$ given by $\FunL(\calc,-):\Prl\to\Prl$. Lastly, the (possibly large) mapping spaces in $\Prl$ are given by the formula
\[
\Map_{\Prl}(\calc,\cald)\simeq\FunL(\calc,\cald)^\sim,
\]
the maximal subgroupoid. This description will be applied in \S\ref{sec:mon} to our context of monoids and groups.

\begin{prop}\label{prop:smmod}
Let $L\colon \Prl \to \Prl$ be a smashing localization or, more generally, a symmetric monoidal localization, and let $\calc$ and $\cald$ be presentable $\infty$-categories such that $\cald$ is in the essential image of $L$.
\begin{enumerate}
\item The map $\FunL(L\calc,\cald) \to \FunL(\calc,\cald)$ induced by the localization $\calc\to L\calc$ is an equivalence.  
\item If $L$ is smashing, then, writing $L\Prl$ for the image of $L$, there is an equivalence of $\infty$-categories
\[
L\Prl\simeq \Mod_{L\cals}(\Prl).
\]
\item Given a second symmetric monoidal localization $L'\colon\Prl\to\Prl$ such that $L'\Prl\subseteq L\Prl$, then the canonical morphism $L\calc \to L'\calc$ induces an equivalence $\FunL(L'\calc,\cald) \to \FunL(L\calc,\cald)$ for every $L'$-local~$\cald.$ 
\end{enumerate}
\end{prop}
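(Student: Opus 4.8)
The plan is to treat the three parts in order, using the description of the monoidal structure on $\Prl$ recalled just above, together with \autoref{prop:symmonloc} and the universal property of the localization $\calc\to L\calc$.

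For part (1), I would argue as follows. Since $\cald$ lies in the essential image of $L$, the full subcategory of local objects $L\Prl\subseteq\Prl$ contains $\cald$. The localization map $\calc\to L\calc$ is, by definition, the $\Prl$-local reflection of $\calc$ into the reflective subcategory $L\Prl$. Precomposition therefore induces a fully faithful functor $\FunL(L\calc,\cald)\to\FunL(\calc,\cald)$ whose essential image consists of those colimit-preserving functors $\calc\to\cald$ that invert the local equivalences; but a symmetric monoidal localization has the property that every colimit-preserving functor to a local object automatically inverts local equivalences (this is the universal property of Bousfield localization applied in $\Prl$, i.e.\ the defining property of the reflective subcategory of $L$-local objects), so the functor is also essentially surjective, hence an equivalence. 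Concretely: any $F\colon\calc\to\cald$ colimit-preserving factors through $L\calc$ because $L\cals\otimes(-)\simeq L$ and $\cald\simeq L\cald$ forces $F\simeq F\circ L$ up to canonical equivalence.

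For part (2), the key point is that a smashing localization on a symmetric monoidal $\infty$-category corresponds to an idempotent commutative algebra object, here $I=L\cals$, as recalled in \S\ref{sec:smash}. By \cite[Proposition~6.3.2.4 and its surrounding discussion]{HA} (the module-theoretic reformulation of idempotent algebras), tensoring with an idempotent algebra $I$ in a symmetric monoidal presentable $\infty$-category identifies the essential image $L\Prl$ with $\Mod_I(\Prl)$, the forgetful functor being fully faithful onto the local objects precisely because $I$ is idempotent (so $M\otimes I\otimes I\simeq M\otimes I$ and an $I$-module structure is a property, not extra data). Instantiating $I=L\cals$ gives $L\Prl\simeq\Mod_{L\cals}(\Prl)$. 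I would cite Lurie's treatment of idempotent objects here rather than reprove it.

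For part (3), since $L'\Prl\subseteq L\Prl$ and $\cald$ is $L'$-local, $\cald$ is in particular $L$-local, so part (1) applies to both localizations: $\FunL(L\calc,\cald)\simeq\FunL(\calc,\cald)$ and $\FunL(L'\calc,\cald)\simeq\FunL(\calc,\cald)$, both equivalences induced by the respective localization maps out of $\calc$. The canonical comparison map $L\calc\to L'\calc$ is the one making the triangle with $\calc\to L\calc$ and $\calc\to L'\calc$ commute (it exists and is essentially unique because $L'\calc$ is $L'$-local hence $L$-local, so the $L$-localization $\calc\to L\calc$ corepresents maps into it). Precomposition with $L\calc\to L'\calc$ therefore fits into a commuting triangle of the three precomposition functors, and since the two legs from $\FunL(\calc,\cald)$ are equivalences, so is the third, $\FunL(L'\calc,\cald)\to\FunL(L\calc,\cald)$.

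The main obstacle is part (1): one must be careful that ``$\cald$ in the essential image of $L$'' really does guarantee that every colimit-preserving functor $\calc\to\cald$ inverts local equivalences, which is where the symmetric-monoidal (not merely reflective) hypothesis on $L$ is used — for a general reflective localization of $\Prl$ this would fail, but \autoref{prop:symmonloc} together with the smashing/idempotent structure supplies exactly what is needed. Parts (2) and (3) are then essentially formal given (1) and the cited results of \cite{HA} on idempotent algebras.
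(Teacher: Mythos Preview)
Your arguments for parts (ii) and (iii) are correct and match the paper's: (ii) is exactly the identification of local objects for a smashing localization with modules over the idempotent algebra \cite[Proposition~6.3.2.10]{HA}, and (iii) follows from (i) applied to each of $L$ and $L'$ together with the 2-out-of-3 property.

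Part (i), however, has a genuine gap. The universal property of the reflective subcategory $L\Prl\subseteq\Prl$ only gives an equivalence of \emph{mapping spaces}
\[
\Map_{\Prl}(L\calc,\cald)\;\simeq\;\Map_{\Prl}(\calc,\cald)
\]
for $\cald$ local, and these are the maximal subgroupoids $\FunL(-,-)^\sim$, not the full internal homs $\FunL(-,-)$. Upgrading from the groupoid cores to the $\infty$-categories themselves is exactly where the symmetric monoidal hypothesis is needed, and you do not explain how it enters. Your description of the essential image as ``colimit-preserving functors $\calc\to\cald$ that invert the local equivalences'' is also off: the local equivalences of $L$ are morphisms in $\Prl$, not in $\calc$, and the unit $\calc\to L\calc$ is in general \emph{not} a localization of $\calc$ (e.g.\ for $L=\Mon(-)$ it is the free $\Eoo$-monoid functor). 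Your reference to \autoref{prop:symmonloc} does not help either, since that result concerns the induced monoidal structure on $L\calc$, which is irrelevant here.

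The paper's route is much shorter and avoids all of this: $\Prl$ is \emph{closed} symmetric monoidal with internal hom $\FunL(-,-)$, so \autoref{rmk_closed} applies directly. Spelled out, for any $\cale\in\Prl$ one has
\[
\Map_{\Prl}\big(\cale,\FunL(L\calc,\cald)\big)\simeq\Map_{\Prl}\big(\cale\otimes L\calc,\cald\big),
\]
the symmetric monoidal hypothesis guarantees that $\cale\otimes\calc\to\cale\otimes L\calc$ is an $L$-local equivalence, and since $\cald$ is local this gives $\Map_{\Prl}(\cale\otimes\calc,\cald)\simeq\Map_{\Prl}(\cale,\FunL(\calc,\cald))$. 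Yoneda then yields the desired equivalence of $\infty$-categories, not merely of their groupoid cores.
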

\begin{proof}
The first statement follows from~\autoref{rmk_closed} and the second from \cite[Proposition 6.3.2.10]{HA}. Finally, the third one follows immediately from the first and the 2-out-of-3 property of equivalences.
\end{proof}

Let us now consider a presentable $\infty$-category endowed with a closed symmetric monoidal structure~$\calc^\otimes$. In this  context the closedness is equivalent to the fact that the monoidal structure preserves colimits separately in each variable, i.e., $\calc^\otimes$ is essentially just a commutative algebra object in $\Prl$~(\cite[Remark~6.3.1.9]{HA}). 

\begin{prop}\label{prop:smmonoidal}
Let $L\colon \Prl \to \Prl$ be a smashing localization or, more generally, a symmetric monoidal localization. Let $\calc^\otimes$ and $\cald^\otimes$ be closed symmetric monoidal presentable $\infty$-categories.
\begin{enumerate}
\item The $\infty$-category $L\calc$ admits a {\em unique} closed symmetric monoidal structure such that the localization map $\calc \to L\calc$ is a symmetric monoidal functor.
\item The map $\FunLMon(L\calc,\cald) \to \FunLMon(\calc,\cald)$ induced by the localization $\calc\to L\calc$ is an equivalence whenever $\cald$ is $L$-local.
\item Given a second symmetric monoidal localization $L'\colon\Prl\to\Prl$ such that $L'\Prl\subseteq L\Prl$, the induced morphism $L\calc \to L'\calc$ admits a unique symmetric monoidal structure. In particular, for every $L'$-local~$\cald$ the induced map $\FunLMon(L'\calc,\cald) \to \FunLMon(L\calc,\cald)$ is an equivalence.
\end{enumerate}
\end{prop}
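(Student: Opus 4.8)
The plan is to deduce all three statements from \autoref{prop:algloc} and \autoref{prop:smmod}, using the dictionary recalled just above: a closed symmetric monoidal presentable $\infty$-category is the same as a commutative algebra object of $\Prl$, and a symmetric monoidal colimit-preserving functor between two such is the same as a morphism in $\Alg_{\bbE_\infty}(\Prl)$.

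For (1), I would apply \autoref{prop:algloc} to the symmetric monoidal $\infty$-category $\Prl$, its symmetric monoidal localization $L$, and the commutative algebra object $\calc^\otimes$. This yields a localization of $\Alg_{\bbE_\infty}(\Prl)$ lying over $L\colon\Prl\to\Prl$; evaluated at $\calc^\otimes$ it equips $L\calc$ with a commutative algebra structure — that is, a closed symmetric monoidal presentable structure — together with a morphism $\calc^\otimes\to L\calc^\otimes$ of commutative algebras lifting the localization $\calc\to L\calc$, i.e.\ a symmetric monoidal colimit-preserving functor. The second assertion of \autoref{prop:algloc}, together with the remark following it, says this structure is the unique one with that property: the space of $\bbE_\infty$-algebra enhancements of $L\calc$ compatible with the localization is contractible. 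This is precisely statement (1).

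For (2), the one point requiring genuine argument is to identify the local objects of the localization on $\Alg_{\bbE_\infty}(\Prl)$ just produced: a commutative algebra $\cald^\otimes$ is local if and only if its underlying $\infty$-category $\cald$ is $L$-local. In one direction, the local objects are the essential image of the fully faithful right adjoint, which on underlying $\infty$-categories is the fully faithful right adjoint $R$ of $L$; in the other, if $\cald$ is $L$-local then the unit $\cald^\otimes\to L\cald^\otimes$ is an equivalence on underlying $\infty$-categories, hence an equivalence, since $\Alg_{\bbE_\infty}(\Prl)\to\Prl$ is conservative. Granting this, (2) is exactly the defining universal property of this localization, applied to the reflection $\calc^\otimes\to L\calc^\otimes$. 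When $L$ is smashing one can argue even more transparently: $L\calc\simeq\calc\otimes I$ for the idempotent commutative algebra $I=L\cals$, so $L\calc^\otimes\simeq\calc^\otimes\amalg I$ is the coproduct in $\Alg_{\bbE_\infty}(\Prl)$, and mapping out of an idempotent algebra contributes a factor that is empty or contractible, and contractible precisely when the target is $L$-local. If $\FunLMon$ is understood as an $\infty$-category of functors and natural transformations rather than as a mapping space, one further invokes \autoref{prop:smmod}(1) to handle the underlying colimit-preserving functors, the symmetric monoidal enhancement being determined by its restriction along the reflection; I expect this last point to be routine, but it is where slightly more than bookkeeping is involved.

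Finally, (3) follows the pattern of \autoref{prop:smmod}(3). By part (1), both $L\calc$ and $L'\calc$ are closed symmetric monoidal presentable, with $\calc\to L\calc$ and $\calc\to L'\calc$ symmetric monoidal. Since $L'\Prl\subseteq L\Prl$, the $\infty$-category $L'\calc$ is $L$-local, so part (2) gives an equivalence $\FunLMon(L\calc,L'\calc)\xrightarrow{\sim}\FunLMon(\calc,L'\calc)$; the symmetric monoidal functor $\calc\to L'\calc$ is therefore the image of an essentially unique symmetric monoidal functor $L\calc\to L'\calc$, which necessarily lifts the plain functor of \autoref{prop:smmod}(3), and uniqueness of its symmetric monoidal structure amounts to the contractibility of the space of symmetric monoidal refinements of the reflection $\calc\to L'\calc$ guaranteed by part (1). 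The concluding equivalence $\FunLMon(L'\calc,\cald)\to\FunLMon(L\calc,\cald)$ for $L'$-local — hence $L$-local — $\cald$ then follows by two-out-of-three, since this map composed with $\FunLMon(L\calc,\cald)\to\FunLMon(\calc,\cald)$ agrees with the equivalence of part (2), as does the second factor. Overall I anticipate no serious obstacle: apart from pinning down the local objects in $\Alg_{\bbE_\infty}(\Prl)$, the whole proposition is a transcription of \autoref{prop:smmod} and \autoref{prop:algloc}.
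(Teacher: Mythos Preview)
Your approach to (i) and (iii) is essentially the paper's: apply \autoref{prop:algloc} to $\Prl$ with its symmetric monoidal localization $L$, and for (iii) re-run the same argument inside $L\Prl$ (equivalently, as you do, apply part (ii) with target $L'\calc$ and then two-out-of-three). Your identification of the $L'$-local objects of $\Alg_{\bbE_\infty}(\Prl)$ via conservativity of the forgetful functor is exactly what is needed.

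For (ii) you correctly flag the one genuine step, namely the passage from mapping \emph{spaces} in $\Alg_{\bbE_\infty}(\Prl)$ to the $\infty$-\emph{category} $\FunLMon(-,-)$, but your proposed resolution via \autoref{prop:smmod}(i) is not quite what the paper does, and it is also not obviously sufficient: $\FunLMon$ is not a full subcategory of $\FunL$, so knowing the underlying-functor map is an equivalence does not by itself control monoidal natural transformations. The paper's argument is different and more direct: it observes that $\Alg_{\bbE_\infty}(\Prl)$ is cotensored over $\Cat_\infty$, with $\cald^K\simeq\FunL(\calp(K),\cald)$, and that this cotensor stays $L$-local whenever $\cald$ is (because $(-)\otimes\calp(K)$ preserves $L$-local equivalences, $L$ being symmetric monoidal). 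Then the mapping-space equivalence from \autoref{prop:algloc}, applied with target $\cald^K$ for all $K$, upgrades to an equivalence of $\infty$-categories $\FunLMon(L\calc,\cald)\simeq\FunLMon(\calc,\cald)$. This cotensor trick is the one concrete ingredient your proposal is missing.
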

\begin{proof}
Statement~(i) follows from~\autoref{prop:algloc}, which also gives equivalences
\[
\Map(\Delta^0,\FunLMon(L\calc,\cald^K))\simeq\Map(\Delta^0,\FunLMon(\calc,\cald^{K}))
\]
for any simplicial set $K$ such that $\cald^K$ is local.
(ii) then follows from the fact that $\Alg_{\bbE_\infty}(\Prl)$ is cotensored over $\Cat_\infty$ in such a way that $\cald^K$ is local whenever $\cald$ is local; indeed, the cotensor $\cald^K$ is given by the internal mapping object $\FunL(\calp(K),\cald)$, and this is a local object since $(-)\otimes\calp(K)$ preserves local equivalences by assumption.
Finally, (iii) is obtained by the same argument as (i) after replacing $\Prl$ with $L\Prl$, which has an induced closed symmetric monoidal structure, $L\colon\Prl\to\Prl$ with the functor $L\Prl\to L\Prl$ induced by the composite $\Prl\to L'\Prl\subseteq L\Prl$, and $\calc$ with $L\calc$, which also inherits a closed symmetric monoidal structure. 
\end{proof}

We shall see in the next section that formation of $\infty$-categories of commutative monoid and group objects in a presentable $\infty$-category $\calc$ are instances of smashing localizations of $\Prl$.
For the moment, it is worth mentioning that there are other well-known examples of smashing localizations of $\Prl$.
The most obvious one is the functor which associates to a presentable $\infty$-category $\calc$ its $\infty$-category $\calc_*$ of {\em pointed objects}; the fact that this is a smashing localization follows from the formula
\[
\calc_*\simeq\calc\otimes\cals_*
\]
and the fact that $\cals_*$ is an idempotent object of $\Prl$ (cf.~\cite[Proposition 6.3.2.11]{HA}).
An important feature of $\cals_*$ is that it is symmetric monoidal under the {\em smash product}, which is uniquely characterized by the requirement that the unit map $\cals\to\cals_*$ is symmetric monoidal.
Less obvious but possibly more important is the functor which associates to a presentable $\infty$-category $\calc$ the $\infty$-category $\Sp(\calc)$ of spectrum objects in $\calc$ (cf.\ \cite[Proposition 6.3.2.18]{HA}).

\section{Commutative monoids and groups as smashing localizations}\label{sec:mon}

In this section we show that the passage to $\infty$-categories of commutative monoids or groups are instances of smashing localizations of~$\Prl$.

\begin{prop}\label{prop:monpres}
Given a presentable $\infty$-category~$\calc$, then also the $\infty$-categories $\Mon(\calc)$ and $\Grp(\calc)$ are presentable.
\end{prop}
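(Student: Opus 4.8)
The plan is to realize $\Mon(\calc)$ and $\Grp(\calc)$ as accessible localizations of presentable functor $\infty$-categories, which will exhibit them as presentable. First I would recall that $\Mon(\calc)$ is by definition the full subcategory
\[
\Mon(\calc) = \Fun^{\times}(\NFin,\calc) \subseteq \Fun(\NFin,\calc)
\]
spanned by the special $\Gamma$-objects, i.e.\ those functors $M\colon\NFin\to\calc$ for which the inert maps $\rho^i\colon\n\to\one$ exhibit $M(\n)$ as the $n$-fold power of $M(\one)$. Since $\calc$ is presentable and $\NFin$ is a small $\infty$-category, the functor $\infty$-category $\Fun(\NFin,\calc)$ is presentable by \cite[Proposition~5.5.3.6]{HTT}. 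So it suffices to show that $\Mon(\calc)$ is an accessible localization of $\Fun(\NFin,\calc)$; by \cite[Proposition~5.5.4.15]{HTT} an accessible reflective subcategory of a presentable $\infty$-category is presentable.

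The key step is thus to exhibit the Segal condition as the condition of being local with respect to a small set of morphisms in $\Fun(\NFin,\calc)$. For each $n$, the $n$-fold power $M(\one)^n$ can be written as a finite limit indexed by the inert maps out of $\n$; concretely, there is a functor $P\colon\NFin\to\calc$ built from $\calc$-limits (namely the right Kan extension along the inclusion of the inert subcategory restricted appropriately, or more simply the representable-style object assembled from the inert maps $\n\to\one$), together with a natural transformation $M\to P M$ whose value at $\n$ is the canonical comparison map $M(\n)\to M(\one)^n$. Being a special $\Gamma$-object is precisely the condition that this comparison is an equivalence for all $\n$. Since $\calc$ is presentable, it is in particular locally presentable, and finite products in $\calc$ commute with sufficiently filtered colimits on each variable; more to the point, for a fixed $n$ the endofunctor of $\Fun(\NFin,\calc)$ sending $M$ to (the relevant power object) is accessible and colimit-preserving enough that the comparison transformations are generated by a small set $S$ of morphisms. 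Then $\Mon(\calc)$ is exactly the full subcategory of $S$-local objects, hence an accessible localization by \cite[Proposition~5.5.4.15]{HTT}, and therefore presentable. I expect the cleanest formulation is to cite Lurie's general machinery: the special $\Gamma$-objects are the algebras/local objects cut out by the Segal maps, and this is handled in \cite[Section~2.4.2, 2.4.3]{HA}; in fact $\Mon(\calc)\simeq\Alg_{\bbE_\infty}(\calc^{\times})$ and presentability of $\infty$-categories of algebras over an $\infty$-operad in a presentable symmetric monoidal $\infty$-category is \cite[Corollary~3.2.3.5]{HA}.

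For $\Grp(\calc)$, by \autoref{prop:gpl}(iii) an $\bbE_\infty$-monoid $M$ is a group if and only if the shear map $s\colon M\times M\to M\times M$ is an equivalence in $\Mon(\calc)$. Equivalently, using characterization (iv), $\Grp(\calc)$ is the full subcategory of very special $\Gamma$-objects, which again is a local subcategory: it is cut out inside $\Mon(\calc)$ by requiring invertibility of a further small set of maps (the shear maps, or the maps forcing $M(\two)\to M(\one)\times M(\one)$ via $(\mathrm{pr}_1,m)$ to be an equivalence). Since inverting a small set of morphisms in a presentable $\infty$-category yields a presentable localization (\cite[Proposition~5.5.4.15]{HTT}), and we have just shown $\Mon(\calc)$ is presentable, it follows that $\Grp(\calc)$ is presentable as well. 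The main obstacle is purely bookkeeping: making precise that "the Segal maps" and "the shear maps" really do form a \emph{small} set of morphisms with the correct local objects, and identifying the reflective/accessible structure — but all of this is either immediate from the functor-category description or already packaged in \cite[Section~2.4]{HA} and \cite[Chapter~5.5]{HTT}, so I would simply assemble those citations rather than reprove them.
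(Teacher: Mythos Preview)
Your overall strategy matches the paper's: exhibit $\Mon(\calc)$ and $\Grp(\calc)$ as the $S$-local objects in $\Fun(\NFin,\calc)$ for some small set $S$ and invoke \cite[Proposition~5.5.4.15]{HTT}. The gap is that you never actually produce such an $S$. The passage about a functor $P$, right Kan extensions, and comparison transformations ``generated by a small set'' is hand-waving; you give no argument for why a \emph{small} set suffices. The paper's proof is concrete at exactly this point: the evaluation functors $\ev_{\n}\colon\Fun(\NFin,\calc)\to\calc$ admit left adjoints $F_{\n}$, and by adjunction the Segal condition at $\n$ becomes locality of $M$ with respect to the canonical maps
\[
\phi_{n,C}\colon\textstyle\bigsqcup_n F_{\one}(C)\longrightarrow F_{\n}(C)
\]
for $C\in\calc$. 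One then uses accessibility of $\calc$ to restrict $C$ to (representatives of) the essentially small subcategory $\calc^\kappa$ of $\kappa$-compact objects, yielding a genuinely small $S$. The group case is handled by adjoining further maps to $S$ encoding the very-special condition.

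Your fallback to \cite[Corollary~3.2.3.5]{HA} does not close the gap either: that result requires the monoidal product to preserve $\kappa$-small colimits separately in each variable, and the cartesian product on a general presentable $\infty$-category need not do so (this would amount to $\calc$ being cartesian closed, which is not assumed in the proposition).
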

\begin{proof}
By definition the $\infty$-categories $\Mon(\calc)$ and $\Grp(\calc)$ are full subcategories of the presentable $\infty$-category $\Fun(\NFin,\calc).$ Therefore, it suffices to show that the monoids and groups, respectively, are precisely the $S$-local objects for a small collection $S$ of morphisms in $\Fun(\NFin,\calc)$ (\cite[Proposition~5.5.4.15]{HTT}). We will give the details for the case of monoids and leave the case of groups to the reader. 

In order to define $S$ we first note that the evaluation functors 
\[
\ev_{\n}\colon\Fun(\NFin,\calc) \to \calc
\]
admit left adjoints $F_{\n}\colon \calc \to \Fun(\NFin,\calc).$ Now, $M \in \Fun(\NFin,\calc)$ belongs to $\Mon(\calc)$ if for every $n \in \mathbb{N}$ the morphism $M(\n) \to \prod M(\one)$ is an equivalence in $\calc$, and this is the case if and only if for every $C \in \calc$ the morphism
\begin{equation}\label{strmorphisms}
\Map_\calc\big(C, M(\n)\big) \longrightarrow \prod  \Map_\calc\big(C, M(\one)\big)
\end{equation}
is an equivalence of spaces. Since $\calc$ is accessible it suffices to check this for objects in~$\calc^\kappa$, the essentially small subcategory of $\kappa$-compact objects for some regular cardinal~$\kappa$. Now we use the equivalences 
\[
\Map_\calc\big(C, M(\n)\big) \simeq \Map_{\Fun(\NFin,\calc)}(F_{\n}(C), M)
\]
and
\[
\qquad\prod \Map_\calc\big(C, M(\one)\big) \simeq \Map_{\Fun(\NFin,\calc)}\big(\bigsqcup F_{\one}(C), M\big)
\] 
and see that the morphism~\eqref{strmorphisms} is induced by a morphism 
$
\phi_{n,C}\colon\bigsqcup_n F_{\one}(C) \to F_{\n}(C) 
$
in $\Fun(\NFin,\calc)$.
Thus we may take $S$ to consist of the $\phi_{n,C}$, where $C$ ranges over any small collections of objects of $\calc$ which contains a representative of each equivalence class of object in $\calc^\kappa$.
\end{proof}

\begin{rmk}
The proof for groups is similar, though we have to add more maps to the set $S$ to account for the `very' special condition.
This tells us in particular that $\Grp(\calc)$ is a reflective subcategory of $\Mon(\calc).$ 
\end{rmk}

\begin{cor}
Let $\calc$ be a presentable $\infty$-category. Then there are functors 
\[
\calc \to \Mon(\calc)\to\Grp(\calc)
\]
which are left adjoint to the respective forgetful functors.
\end{cor}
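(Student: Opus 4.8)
The plan is to deduce this corollary formally from \autoref{prop:monpres} and the remark following it, using nothing more than the fact that a composite of left adjoints is left adjoint to the composite of the corresponding right adjoints.

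First I would handle the forgetful functor $U\colon\Mon(\calc)\to\calc$. Since the underlying object of an $\Eoo$-monoid $M\colon\NFin\to\calc$ is by definition $M(\one)$, the functor $U$ factors as the composite
\[
\Mon(\calc)\;\hookrightarrow\;\Fun(\NFin,\calc)\;\xrightarrow{\ev_{\one}}\;\calc
\]
of the inclusion $\iota$ followed by evaluation at $\one$. The inclusion $\iota$ admits a left adjoint $L$ because, by the proof of \autoref{prop:monpres}, $\Mon(\calc)$ is the reflective subcategory of $S$-local objects in the presentable $\infty$-category $\Fun(\NFin,\calc)$ for a small set $S$. The evaluation functor $\ev_{\one}$ admits the left adjoint $F_{\one}$, as recorded in that same proof. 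Hence $U$ admits the left adjoint $L\circ F_{\one}\colon\calc\to\Fun(\NFin,\calc)\to\Mon(\calc)$; this is the free commutative monoid functor.

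For the forgetful functor $\Grp(\calc)\to\Mon(\calc)$ there is even less to do: by the remark after \autoref{prop:monpres} the $\infty$-category $\Grp(\calc)$ is a reflective subcategory of $\Mon(\calc)$, so its inclusion admits a left adjoint by the very definition of a reflective subcategory. Composing this left adjoint with the previous one produces the left adjoint of the composite $\calc\to\Grp(\calc)$ as well.

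There is no real obstacle here; all the substance — presentability of $\Mon(\calc)$ and $\Grp(\calc)$ and their identification as reflective localizations — is already contained in \autoref{prop:monpres} and its remark. The only thing one must be slightly careful about is the bookkeeping: that the forgetful functor is genuinely the composite $\ev_{\one}\circ\iota$ (immediate from the definition of an $\Eoo$-monoid), and that passing to left adjoints reverses the order of composition, so that the free functor is $L\circ F_{\one}$ and not $F_{\one}\circ L$. One could instead invoke the $\infty$-categorical adjoint functor theorem, noting that $U$ preserves limits (they are computed in $\calc$, as in the proof of \autoref{pro:pre}) and is accessible; but the explicit composite-of-adjoints argument is cleaner and matches the style of this section.
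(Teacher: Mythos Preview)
Your argument is correct, but the paper takes precisely the route you mention and set aside at the end: it invokes the adjoint functor theorem, observing that limits in $\Mon(\calc)$ and $\Grp(\calc)$ are computed on underlying objects, so the forgetful functors preserve limits (and are accessible since everything is presentable by \autoref{prop:monpres}). That is the entire proof in the paper---one sentence.

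Your approach is more explicit: you factor the forgetful functor $\Mon(\calc)\to\calc$ as the composite $\ev_{\one}\circ\iota$ and exhibit its left adjoint as $L\circ F_{\one}$, then handle $\Grp(\calc)\to\Mon(\calc)$ via its being a reflective subcategory. This has the advantage of actually naming the free functor rather than merely asserting its existence, and it reuses the ingredients already assembled in the proof of \autoref{prop:monpres}. The paper's approach is terser and avoids tracking the factorization, at the cost of appealing to the adjoint functor theorem as a black box. Both are perfectly fine; your remark that the composite-of-adjoints argument ``matches the style of this section'' happens to be the opposite of what the authors chose.
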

\begin{proof}
 Since limits in $\Mon(\calc)$ and $\Grp(\calc)$ are computed as the limits of the underlying objects, this follows from the adjoint functor theorem.
\end{proof}

\begin{rmk}
Let~$\calc$ be a presentable~$\infty$-category. The functor $\Mon(\calc)\to\Grp(\calc)$ left adjoint to the forgetful functor $\Grp(\calc)\to\Mon(\calc)$ is called the \emph{group completion}.
Thus, in the framework of $\infty$-categories, the group completion $\Mon(\calc)\to\Grp(\calc)$ has the expected universal property, defining a left adjoint to the forgetful functor $\Grp(\calc)\to\Mon(\calc)$.
\end{rmk}

The following theorem, while straightforward to prove, is central.

\begin{thm}\label{thm:idem}
The assignments $\calc \mapsto \Mon(\calc)$ and $\calc \mapsto \Grp(\calc)$ refine to smashing localizations of $\Prl$. Thus, we have, in particular, equivalences of $\infty$-categories
\begin{equation*}
\Mon(\calc) \simeq \calc \otimes \Mon(\cals) \qquad\text{and}\qquad \Grp(\calc) \simeq \calc \otimes \Grp(\cals). 
\end{equation*}
The local objects are precisely the preadditive presentable $\infty$-categories and the additive presentable $\infty$-categories, respectively.
\end{thm}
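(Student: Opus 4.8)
The plan is to verify the hypotheses of the characterization lemma for localizations (the lemma immediately before the definition of smashing localization) and then identify the resulting idempotent object, using the work already done in \S\ref{sec:pre}. First I would observe that the endofunctors $\Mon(-)$ and $\Grp(-)$ of $\Prl$ come equipped with natural transformations $\eta\colon\id\to\Mon$ and $\eta\colon\id\to\Grp$: these are the free functors $\calc\to\Mon(\calc)$ and $\calc\to\Grp(\calc)$ constructed above (note that since these are left adjoints between presentable $\infty$-categories they do indeed define morphisms in $\Prl$). To apply the lemma I must check that for every presentable $\calc$ the two canonical maps $\Mon(\calc)\to\Mon(\Mon(\calc))$ are equivalences, and similarly for $\Grp$. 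But this is exactly \autoref{cor:colmon}(ii) and \autoref{cor:colgrp}(ii): since $\Mon(\calc)$ is preadditive by \autoref{cor:colmon}(i), the forgetful functor $\Mon(\Mon(\calc))\to\Mon(\calc)$ is an equivalence, and the two structure maps agree because the unit exhibits $\Mon(\calc)$ as preadditive. Hence $\Mon(-)$ and $\Grp(-)$ are localizations of $\Prl$ in the sense of Bousfield, with local objects precisely those $\calc$ for which $\calc\to\Mon(\calc)$ (resp.\ $\calc\to\Grp(\calc)$) is an equivalence — which by \autoref{pro:pre}(\ref{p4}) and \autoref{pro:add}(\ref{pp3}) are exactly the preadditive, resp.\ additive, presentable $\infty$-categories.

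Next I would upgrade these localizations to \emph{smashing} localizations, i.e.\ identify them with $(-)\otimes\Mon(\cals)$ and $(-)\otimes\Grp(\cals)$. The key point is that a localization $L\colon\Prl\to\Prl$ which preserves colimits is automatically of the form $(-)\otimes L\cals$: indeed $L$ preserves colimits (being a left adjoint, or directly because $\Mon$ and $\Grp$ commute with the relevant colimits — limits of Segal objects are computed underlying, and these functors are accessible), so $L\simeq L\circ((-)\otimes\cals)\simeq(-)\otimes L\cals$ by the universal property of $\cals$ as the monoidal unit, together with the fact that $\calc\otimes(-)$ preserves colimits. Thus $\Mon(\calc)\simeq\calc\otimes\Mon(\cals)$ and $\Grp(\calc)\simeq\calc\otimes\Grp(\cals)$, and $\Mon(\cals)$, $\Grp(\cals)$ are the associated idempotent objects of $\Prl$. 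Equivalently, one may invoke \autoref{prop:smmod}: once we know $\Mon(-)$ is a localization, it suffices to exhibit a natural equivalence $\Mon(\calc)\simeq\calc\otimes\Mon(\cals)$, and the universal property of the tensor product reduces this to checking that $\FunL(\calc\otimes\Mon(\cals),\cald)$ computes finite-product-preserving, colimit-compatible functors out of $\Mon(\cals)$ in a way matching $\Mon(\calc)$.

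The step I expect to be the main obstacle is establishing the natural equivalence $\Mon(\calc)\simeq\calc\otimes\Mon(\cals)$ at the level of full $\infty$-categorical naturality, rather than just objectwise: one must check that the localization $\calc\to\Mon(\calc)$ is the one induced by tensoring the idempotent algebra $\Mon(\cals)$, which amounts to matching two a priori different reflective subcategories of $\calc$ and knowing that reflections are unique. Here the cleanest route is to use \autoref{cor:colmon}(iii) (and its companion for groups), which identifies $\Fun^\Pi(\cald,\Mon(\calc))$ with $\Fun^\Pi(\cald,\calc)$ for preadditive $\cald$, together with the description of mapping spaces in $\Prl$ as $\FunL(\calc,\cald)^\sim$ and the compatibility of $\Mon(-)$ with the $\Fun^\Pi$/$\FunL$ formalism from \autoref{lem:algR}; this pins down $\Mon(\cals)$ as an idempotent object and $\Mon(-)$ as the corresponding smashing localization. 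The identification of the local objects is then immediate from \autoref{pro:pre} and \autoref{pro:add}, noting that a presentable $\infty$-category is preadditive (resp.\ additive) iff its underlying $\infty$-category is, since presentability is not an extra constraint on the defining conditions.
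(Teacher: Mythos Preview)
Your identification of the local objects and the use of \autoref{cor:colmon}, \autoref{cor:colgrp}, \autoref{pro:pre}, \autoref{pro:add} is exactly right, and matches the paper. The difference is in how you establish the smashing formula $\Mon(\calc)\simeq\calc\otimes\Mon(\cals)$.

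Your primary route---``$L$ is a localization and preserves colimits, hence $L\simeq(-)\otimes L(\cals)$''---has a gap. First, the parenthetical ``being a left adjoint'' conflates the reflector $\Prl\to\Prlpre$ (which is a left adjoint) with the endofunctor of $\Prl$ obtained by composing with the inclusion (which is a priori only a composite of a left and a right adjoint); so colimit-preservation for the endofunctor still needs an argument. Second, even granting colimit-preservation, the step $L\circ\big((-)\otimes\cals\big)\simeq(-)\otimes L\cals$ is not a consequence of $\cals$ being the unit: it would follow if every object of $\Prl$ were a colimit of copies of $\cals$, but that is neither obvious nor something the paper has available. Your fallback route via \autoref{cor:colmon}(iii) and mapping spaces is pointed in the right direction but is still indirect.

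The paper bypasses all of this by using the explicit description of the tensor product, $\calc\otimes\cald\simeq\FunR(\calc\op,\cald)$, together with the $\FunR$ part of \autoref{lem:algR}. This gives in one stroke
\[
\calc\otimes\Mon(\cald)\;\simeq\;\FunR\big(\calc\op,\Mon(\cald)\big)\;\simeq\;\Mon\big(\FunR(\calc\op,\cald)\big)\;\simeq\;\Mon(\calc\otimes\cald),
\]
and specializing $\cald=\cals$ yields $\Mon(\calc)\simeq\calc\otimes\Mon(\cals)$ directly. The localization statement then follows from \autoref{cor:colmon} and the identification of local objects from \autoref{pro:pre}, exactly as you say. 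So you have all the ingredients; the point is that \autoref{lem:algR} applied to $\FunR$ (not $\Fun^\Pi$ or $\FunL$) together with the $\FunR$ formula for $\otimes$ gives the smashing identity immediately, with no need to argue about colimit-preservation in $\Prl$.
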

\begin{proof}
The  description of the tensor product of presentable $\infty$-categories together with~\autoref{lem:algR} gives us the chain of equivalences
\begin{eqnarray*}
\calc \otimes \Mon(\cald) &\simeq& \FunR\big(\calc\op, \Mon(\cald)\big) \\
&\simeq& \Mon\big(\FunR(\calc\op,\cald)\big) \\
&\simeq&  \Mon(\calc \otimes \cald).
\end{eqnarray*}
In particular, we have $\Mon(\calc) \simeq \calc \otimes \Mon(\cals)$. The fact that $\Mon$ is a localization follows from~\autoref{cor:colmon}. The local objects are precisely the presentable $\infty$-categories $\calc$ for which the canonical functor is an equivalence
$\Mon(\calc) \simeq \calc$, hence by~\autoref{pro:pre} precisely the preadditive $\infty$-categories.
The case of groups is established along the same lines.
\end{proof}

As a consequence we obtain the following result.

\begin{cor}\label{cormon}
Let $\calc$ and $\cald$ be presentable $\infty$-category. Then there are canonical equivalences
\begin{align*}
&\calc \otimes \Mon(\cald) \simeq \Mon(\calc \otimes \cald) \simeq \Mon(\calc) \otimes \cald,  \\
&\calc \otimes \Grp(\cald)\; \simeq \Grp(\calc \otimes \cald)\: \simeq \Grp(\calc) \otimes \cald.
\end{align*}
\
\end{cor}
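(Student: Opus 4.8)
The plan is to derive both equivalences as immediate consequences of \autoref{thm:idem}. For the left-hand equivalence $\calc\otimes\Mon(\cald)\simeq\Mon(\calc\otimes\cald)$ there is in fact nothing new to do: this is precisely the chain of equivalences already exhibited in the proof of \autoref{thm:idem}. Explicitly, combining the description $\calc\otimes\cald\simeq\FunR(\calc\op,\cald)$ of the tensor product on $\Prl$ with the second half of \autoref{lem:algR} gives
\[
\calc\otimes\Mon(\cald)\simeq\FunR\big(\calc\op,\Mon(\cald)\big)\simeq\Mon\big(\FunR(\calc\op,\cald)\big)\simeq\Mon(\calc\otimes\cald),
\]
and the same computation with $\Grp$ in place of $\Mon$ works verbatim, using characterization~(4) of \autoref{prop:gpl} exactly as in the proof of \autoref{thm:idem}.

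For the right-hand equivalence $\Mon(\calc\otimes\cald)\simeq\Mon(\calc)\otimes\cald$ I would use the commutativity of the symmetric monoidal structure on $\Prl$. Applying the left-hand equivalence with the roles of $\calc$ and $\cald$ exchanged yields $\cald\otimes\Mon(\calc)\simeq\Mon(\cald\otimes\calc)$; composing this with the symmetry equivalences $\Mon(\calc)\otimes\cald\simeq\cald\otimes\Mon(\calc)$ and $\Mon(\cald\otimes\calc)\simeq\Mon(\calc\otimes\cald)$ (the latter induced by the symmetry $\cald\otimes\calc\simeq\calc\otimes\cald$) produces the claim. Alternatively, and perhaps more transparently, one can argue entirely through the smashing description: since $\Mon(-)\simeq(-)\otimes\Mon(\cals)$ by \autoref{thm:idem}, associativity and symmetry of $\otimes$ on $\Prl$ give
\[
\Mon(\calc)\otimes\cald\simeq\big(\calc\otimes\Mon(\cals)\big)\otimes\cald\simeq(\calc\otimes\cald)\otimes\Mon(\cals)\simeq\Mon(\calc\otimes\cald).
\]
The $\Grp$ case is word-for-word the same, using $\Grp(-)\simeq(-)\otimes\Grp(\cals)$.

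Since every step is a formal manipulation of equivalences already available from \autoref{thm:idem}, \autoref{lem:algR}, and the recalled structure of $\Prl$, there is no substantial obstacle here. The only point deserving a word of care is naturality: all the identifications involved are built from adjoint equivalences and from the associativity and symmetry constraints of the tensor product, so the equivalences are natural in $\calc$ and $\cald$, and the three expressions in the statement are canonically, not merely abstractly, identified.
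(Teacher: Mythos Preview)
Your proposal is correct and matches the paper's approach exactly: the paper states this corollary with no separate proof, treating it as an immediate consequence of \autoref{thm:idem}, whose proof already contains the chain $\calc\otimes\Mon(\cald)\simeq\FunR(\calc\op,\Mon(\cald))\simeq\Mon(\FunR(\calc\op,\cald))\simeq\Mon(\calc\otimes\cald)$. Your observation that the right-hand equivalence follows either by symmetry of $\otimes$ or directly from the smashing formula $\Mon(-)\simeq(-)\otimes\Mon(\cals)$ is exactly the intended reading.
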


Let us denote the full subcategories of $\Prl$ spanned by the preadditive and additive $\infty$-categories respectively by
\[
\Prlpre\subseteq\Prl \qquad \text{and} \qquad \Prladd\subseteq\Prl.
\] 
Then \autoref{prop:smmod} specializes to the following two corollaries.

\begin{cor}\label{cor:premod}
The forgetful functors
\[
\Mod_{\Mon(\cals)}(\Prl)\to\Prl\quad\text{and}\quad\Mod_{\Grp(\cals)}(\Prl)\to\Prl
\]
induce equivalences of $\infty$-categories
\[
\mathrm{Mod}_{\Mon(\cals)}(\Prl)\simeq\Prlpre \qquad \text{and} \qquad \Mod_{\Grp(\cals)}(\Prl)\simeq\Prladd.
\]
\end{cor}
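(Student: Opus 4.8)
The plan is to obtain \autoref{cor:premod} as a direct specialization of \autoref{prop:smmod}(ii). By \autoref{thm:idem}, the assignment $\calc\mapsto\Mon(\calc)$ refines to a smashing localization $L\colon\Prl\to\Prl$, realized as $L\simeq(-)\otimes\Mon(\cals)$, and similarly $\calc\mapsto\Grp(\calc)$ is a smashing localization $L'\simeq(-)\otimes\Grp(\cals)$. In either case the unit $\cals$ of the monoidal structure on $\Prl$ is sent by the localization to the idempotent commutative algebra object $\Mon(\cals)$ (respectively $\Grp(\cals)$), which is exactly the object named $L\cals$ in the statement of \autoref{prop:smmod}. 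Part (ii) of that proposition then yields equivalences $L\Prl\simeq\Mod_{\Mon(\cals)}(\Prl)$ and $L'\Prl\simeq\Mod_{\Grp(\cals)}(\Prl)$, compatibly with the forgetful functors to $\Prl$.

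First I would identify the essential image $L\Prl$ with $\Prlpre$ and $L'\Prl$ with $\Prladd$. This is precisely the last sentence of \autoref{thm:idem}: the $L$-local objects are the presentable $\infty$-categories $\calc$ for which the reflection $\calc\to\Mon(\calc)$ is an equivalence, and by \autoref{pro:pre}(iv) these are exactly the preadditive presentable $\infty$-categories; dually, using \autoref{pro:add}(iii), the $L'$-local objects are the additive presentable $\infty$-categories. Chaining these identifications with the equivalences of the previous paragraph produces
\[
\Mod_{\Mon(\cals)}(\Prl)\simeq L\Prl\simeq\Prlpre
\qquad\text{and}\qquad
\Mod_{\Grp(\cals)}(\Prl)\simeq L'\Prl\simeq\Prladd,
\]
which is the asserted equivalence of $\infty$-categories.

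Finally I would check that, under these equivalences, the module-category forgetful functors $\Mod_{\Mon(\cals)}(\Prl)\to\Prl$ and $\Mod_{\Grp(\cals)}(\Prl)\to\Prl$ correspond to the inclusions $\Prlpre\hookrightarrow\Prl$ and $\Prladd\hookrightarrow\Prl$; this is built into the statement of \autoref{prop:smmod}(ii) (the equivalence $L\Prl\simeq\Mod_{L\cals}(\Prl)$ of \cite[Proposition~6.3.2.10]{HA} is compatible with the projections to $\Prl$), and it identifies the forgetful functor with the right adjoint of $L$, i.e.\ the inclusion of local objects. Hence the forgetful functors do induce the claimed equivalences onto the full subcategories $\Prlpre$ and $\Prladd$.

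The only genuine content beyond quoting \autoref{prop:smmod} and \autoref{thm:idem} is the bookkeeping that $L\cals=\Mon(\cals)$ and $L'\cals=\Grp(\cals)$ as \emph{commutative algebra objects} (not merely as $\infty$-categories), so that $\Mod_{L\cals}(\Prl)$ really is $\Mod_{\Mon(\cals)}(\Prl)$; this is exactly the idempotent-algebra structure supplied by the smashing localization in \autoref{thm:idem}, so there is no serious obstacle — the corollary is essentially a translation.
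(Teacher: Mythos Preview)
Your proposal is correct and matches the paper's approach exactly: the paper states that \autoref{cor:premod} is obtained by specializing \autoref{prop:smmod} (via \autoref{thm:idem}), with no further argument given. Your write-up simply makes explicit the bookkeeping (identification of $L\cals$ and of the local objects, and compatibility with the forgetful functors) that the paper leaves implicit.
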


\begin{cor}\label{cor:frepre}
Let $\calc$ and $\cald$ be presentable $\infty$-categories.
\begin{enumerate}
\item If $\cald$ is preadditive then the free $\Eoo$-monoid functor $\calc\to\Mon(\calc)$ induces an equivalence of $\infty$-categories
\[\FunL(\Mon(\calc),\cald)\stackrel{\simeq}{\to}\FunL(\calc,\cald),\]
exhibiting $\Mon(\calc)$ as the \emph{free preadditive presentable $\infty$-category generated by~$\calc$}. In particular, we have canonical equivalences
\[\FunL(\Mon(\cals),\cald)\stackrel{\simeq}{\to}\FunL(\cals,\cald)\stackrel{\simeq}{\to}\cald\]
exhibiting $\Mon(\cals)$ as the free preadditive presentable~$\infty$-category on one generator.
\item If $\cald$ is additive then the free $\Eoo$-group functor $\calc\to\Grp(\calc)$ induces an equivalence of $\infty$-categories
\[\FunL(\Grp(\calc),\cald)\stackrel{\simeq}{\to}\FunL(\calc,\cald),\]
exhibiting $\Grp(\calc)$ as the \emph{free additive presentable $\infty$-category generated by~$\calc$}. In particular, the free $\Eoo$-group functor $\cals\to\Grp(\cals)$ induces canonical equivalences
\[\FunL(\Grp(\cals),\cald)\stackrel{\simeq}{\to}\FunL(\cals,\cald)\stackrel{\simeq}{\to}\cald\]
exhibiting $\Grp(\cals)$ as the free additive, presentable~$\infty$-category on one generator.
\end{enumerate} 
\end{cor}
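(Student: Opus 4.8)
The plan is to deduce both parts of the corollary directly from \autoref{thm:idem} together with part (i) of \autoref{prop:smmod}. By \autoref{thm:idem}, the assignment $\calc\mapsto\Mon(\calc)$ refines to a smashing localization $L$ of $\Prl$ whose essential image consists precisely of the preadditive presentable $\infty$-categories; in particular $L\calc=\Mon(\calc)$, and a presentable $\infty$-category $\cald$ is $L$-local if and only if it is preadditive. First I would invoke \autoref{prop:smmod}(i): for any presentable $\calc$ and any $\cald$ in the essential image of $L$, the map
\[
\FunL(\Mon(\calc),\cald)\longrightarrow\FunL(\calc,\cald)
\]
induced by the localization unit $\calc\to\Mon(\calc)$ is an equivalence. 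Taking $\cald$ preadditive gives exactly the asserted equivalence. Since this equivalence is induced by precomposition with the single fixed functor $\calc\to\Mon(\calc)$, it is natural in $\cald$ and hence says precisely that $\Mon(\calc)$ corepresents, on the subcategory $\Prlpre$ of preadditive presentable $\infty$-categories, the restriction of $\FunL(\calc,-)$; this is the content of the phrase ``free preadditive presentable $\infty$-category generated by $\calc$,'' with the localization map as the universal arrow.

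For the ``in particular'' clause I would specialize the above to $\calc=\cals$. The first equivalence then reads $\FunL(\Mon(\cals),\cald)\simeq\FunL(\cals,\cald)$, and the second equivalence $\FunL(\cals,\cald)\simeq\cald$ is the defining property of $\cals$ as the unit of the monoidal structure on $\Prl$ (equivalently, $\cals$ is the free presentable $\infty$-category on one generator, cf.\ \cite[Example~6.3.1.19]{HA}); composing the two shows that $\Mon(\cals)$ is the free preadditive presentable $\infty$-category on one generator. The additive case is handled by the same argument verbatim, using the other half of \autoref{thm:idem}: $\calc\mapsto\Grp(\calc)$ is a smashing localization of $\Prl$ whose essential image is the additive presentable $\infty$-categories, so \autoref{prop:smmod}(i) yields $\FunL(\Grp(\calc),\cald)\simeq\FunL(\calc,\cald)$ for $\cald$ additive, and specializing to $\calc=\cals$ gives the statement about $\Grp(\cals)$.

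There is no substantive obstacle here; the entire content has been assembled in \S\ref{sec:smash} and \S\ref{sec:mon}. The only two points worth flagging in the write-up are (a) the identification of ``$L$-local'' with ``preadditive'' (resp.\ ``additive''), which is exactly the last sentence of \autoref{thm:idem}, and (b) the remark that the equivalences produced by \autoref{prop:smmod}(i) are natural enough in $\cald$ to be read as a corepresentability/universal-property statement — which is immediate since they come from precomposition with a fixed unit map. Everything else is a direct substitution.
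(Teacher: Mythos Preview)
Your proposal is correct and matches the paper's approach exactly: the paper presents this corollary (together with \autoref{cor:premod}) as a direct specialization of \autoref{prop:smmod}, using \autoref{thm:idem} to identify $\Mon(-)$ and $\Grp(-)$ as smashing localizations with local objects the preadditive (resp.\ additive) presentable $\infty$-categories. Your write-up simply makes explicit the one-line deduction the paper leaves implicit.
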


The results of this section give us a refined picture of the stabilization process of presentable $\infty$-categories as we describe it in the next corollary (we will obtain a further \emph{monoidal} refinement in~\autoref{cor:monstab}). In \cite[Chapter~1]{HA} it is shown that the stabilization of a presentable $\infty$-category~$\calc$ is given by the $\infty$-category $\Sp(\calc)$ of \emph{spectrum objects} in $\calc$, which is to say the limit
\[
\Sp(\calc)\simeq\lim\{\calc_*\overset{\Omega}{\longleftarrow}\calc_*\overset{\Omega}{\longleftarrow}\calc_*\overset{\Omega}{\longleftarrow}\cdots\}\, ,
\]
taken in the $\infty$-category of (not necessarily small) $\infty$-categories, or equivalently in the $\infty$-category $\Prr$ of presentable $\infty$-categories by \cite[Theorem 5.5.3.18]{HTT}.
Alternatively, $\Sp(\calc)$ is equivalent to the $\infty$-category of reduced excisive functors
\[
\Sp(\calc)\simeq\mathrm{Exc}_*(\cals_*^\mathrm{fin},\calc)
\]
(see \cite[Section 1.4.2]{HA} for details).
Recall from \cite[Proposition~1.4.4.4]{HA} that for such a~$\calc$ the $\infty$-category $\Sp(\calc)$ is related to $\calc$ by the suspension spectrum adjunction $(\Sigma^\infty_+,\Omega^\infty_-)\colon\calc\adj\Sp(\calc).$ 

\begin{cor}\label{cor:stab}
The stabilization of presentable $\infty$-categories $\Prl\to\Prlst$ factors as a composition of adjunctions
$$
\Prl\adj\Prlpt\adj\Prlpre\adj\Prladd\adj\Prlst.
$$
In particular, if $\calc$ is a presentable $\infty$-category, then $\Sigma^\infty_+\colon\calc\to\Sp(\calc)$ factors as a composition of left adjoints 
\begin{equation*}
\Sigma^\infty_+\colon\calc\to\calc_*\to \Mon(\calc) \to \Grp(\calc) \to \Sp(\calc),
\end{equation*}
each of which is uniquely determined by the fact that it commutes with the corresponding free functors from~$\calc$. 
\end{cor}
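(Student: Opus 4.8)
The plan is to obtain \autoref{cor:stab} formally from the results of \S\ref{sec:mon}, together with the fact --- established in \cite[Proposition~6.3.2.18]{HA} and the surrounding discussion --- that $\Sp(-)$ is a smashing localization of $\Prl$ whose local objects are exactly the stable presentable $\infty$-categories. First I would record the chain of full-subcategory inclusions
\[
\Prlst\subseteq\Prladd\subseteq\Prlpre\subseteq\Prlpt\subseteq\Prl,
\]
which simply says that every stable presentable $\infty$-category is additive (in a stable $\infty$-category the shear map $A\oplus A\to A\oplus A$ is invertible because $\pi_0\Map(A,A)$ is a group), every additive one is preadditive, and every preadditive one is pointed. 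Each of these four subcategories is reflective in $\Prl$, with reflector $(-)_*$, $\Mon(-)$, $\Grp(-)$ and $\Sp(-)$ respectively (the first three from the smashing-localization description of $\calc_*$ and from \autoref{thm:idem}, the last from the cited result). Since a reflective subcategory of $\Prl$ lying inside another reflective subcategory $\mathcal A$ is automatically reflective in $\mathcal A$ --- the ambient reflector restricted to $\mathcal A$ retains its universal property because all subcategories in sight are full --- we get the tower of adjunctions $\Prl\adj\Prlpt\adj\Prlpre\adj\Prladd\adj\Prlst$. Composing the four left adjoints produces a left adjoint to the composite inclusion $\Prlst\hookrightarrow\Prl$, and the left adjoint of that inclusion is by definition the stabilization functor; hence stabilization factors through the tower.

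To pass to the pointwise statement I would evaluate the tower at a fixed presentable $\infty$-category $\calc$. Running $\calc$ through the four reflectors gives $\calc$, $\calc_*$, $\Mon(\calc_*)$, $\Grp(\Mon(\calc_*))$, $\Sp(\Grp(\Mon(\calc_*)))$, and \autoref{cormon} (together with the idempotence of $\cals_*$, $\Mon(\cals)$, $\Grp(\cals)$, $\Sp$ in $\Prl$) identifies these with $\calc_*$, $\Mon(\calc)$, $\Grp(\calc)$ and $\Sp(\calc)$: indeed $\Mon(\calc_*)\simeq\Mon(\calc)_*\simeq\Mon(\calc)$ since $\Mon(\calc)$ is already pointed, and similarly $\Grp(\Mon(\calc))\simeq\Grp(\calc)$ and $\Sp(\Grp(\calc))\simeq\Sp(\calc)$ since $\Grp(\calc)$ is already preadditive and $\Sp(\calc)$ is already additive. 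Transported along these equivalences, the unit maps of the adjunctions become the free functors $\calc\to\calc_*\to\Mon(\calc)\to\Grp(\calc)\to\Sp(\calc)$; for a smashing localization $(-)\otimes I$ the unit at $\calc$ is $\calc\simeq\calc\otimes\cals\to\calc\otimes I$ induced by the structure map $\cals\to I$, which is the adjunction of a disjoint basepoint for $I=\cals_*$ and $\Sigma^\infty_+\colon\cals\to\Sp$ for $I=\Sp$, so the composite $\calc\to\Sp(\calc)$ is $\Sigma^\infty_+$, consistently with \cite[Proposition~1.4.4.4]{HA}.

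For the uniqueness clause I would invoke \autoref{cor:frepre} and \autoref{prop:smmod}(i): since $\Mon(\calc)$, $\Grp(\calc)$, $\Sp(\calc)$ are respectively pointed, preadditive and additive, restriction along the free functors from $\calc$ yields equivalences $\FunL(\calc_*,\Mon(\calc))\simeq\FunL(\calc,\Mon(\calc))$, $\FunL(\Mon(\calc),\Grp(\calc))\simeq\FunL(\calc,\Grp(\calc))$ and $\FunL(\Grp(\calc),\Sp(\calc))\simeq\FunL(\calc,\Sp(\calc))$. Thus in each case there is an essentially unique colimit-preserving functor out of the intermediate $\infty$-category that restricts to the corresponding free functor from $\calc$, and that the units above do restrict in this way follows from the composition-of-adjoints identity: for instance group completion composed with the free $\Eoo$-monoid functor is left adjoint to $\big(\Mon(\calc)\to\calc\big)\circ\big(\Grp(\calc)\hookrightarrow\Mon(\calc)\big)=\big(\Grp(\calc)\to\calc\big)$ and hence equals the free $\Eoo$-group functor; the first functor $\calc\to\calc_*$ is itself the relevant free functor.

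The step that is not mere bookkeeping is the identification of the four reflectors of $\Prl$ with $(-)_*$, $\Mon$, $\Grp$, $\Sp$ and of their common refinement with stabilization --- in particular the inputs that a presentable $\infty$-category is stable exactly when $\Sigma^\infty_+\colon\calc\to\Sp(\calc)$ is an equivalence and that the unit of the $\Sp$-localization of $\Prl$ is $\Sigma^\infty_+$. These rest on \cite{HA} and must be cited carefully; granting them, the remainder is a routine assembly of the results of \S\ref{sec:mon}.
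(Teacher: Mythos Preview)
Your proposal is correct and follows essentially the same route as the paper's own proof: record the chain of reflective inclusions $\Prlst\subseteq\Prladd\subseteq\Prlpre\subseteq\Prlpt\subseteq\Prl$ coming from the smashing localizations (\autoref{thm:idem} and the pointed/stable cases from \cite{HA}), use that stable $\Rightarrow$ additive $\Rightarrow$ preadditive $\Rightarrow$ pointed, and then appeal to \autoref{cor:frepre} and \autoref{prop:smmod} for the uniqueness clause. The paper's argument is a two-sentence citation of exactly these ingredients, whereas you have unpacked the reflective-subcategory composition and the pointwise identifications in full; the only minor wrinkle is that the characterization ``$\calc$ is stable iff $\Sigma^\infty_+\colon\calc\to\Sp(\calc)$ is an equivalence'' tacitly presupposes $\calc$ pointed, but this is harmless here since the stable case sits inside $\Prlpt$ anyway.
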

\begin{proof}
This follows from~\autoref{cor:frepre} and the corresponding corollary for the functor $(-)_+\colon\calc\to\calc_\ast$ together with the facts that $\Sp(\calc)$ is additive (by Corollary~1.4.2.17 and Remark~1.1.3.5 in~\cite{HA}), $\Grp(\calc)$ is preadditive (even additive by~\autoref{cor:colgrp}), and $\Mon(\calc)$ is pointed (in fact, preadditive by~\autoref{cor:colmon}). For the second statement, it suffices to use \autoref{prop:smmod}. 
\end{proof}

\section{Canonical symmetric monoidal structures}\label{sec:spec}

Let us now assume that $\calc$ is a presentable $\infty$-category endowed with a closed symmetric monoidal structure~$\calc^\otimes$. In this section we specialize the general results from \S\ref{sec:smash} (or more specifically
\autoref{prop:smmonoidal}) to the localizations 
$(-)_\ast, \Mon(-), \Grp(-),$ and $\Sp(-)$. The two cases of~$\calc_\ast$ and $\Sp(\calc)$ are already essentially covered in \cite[Section~6.1.9]{HA}, but since these results are not stated explicitly, we include them here for the sake of completeness.

\begin{thm}\label{thm:tensorproducts}
Let $\calc^\otimes$ be a closed symmetric monoidal structure on a presentable $\infty$-category~$\calc$.
The $\infty$-categories $\calc_*$, $\Mon(\calc)$, $\Grp(\calc)$, and $\Sp(\calc)$ all admit closed symmetric monoidal structures, which are uniquely determined by the requirement that the respective free functors from~$\calc$ are symmetric monoidal.
Moreover, each of the functors
\begin{equation*}
\calc_*\to\Mon(\calc) \to \Grp(\calc) \to \Sp(\calc)
\end{equation*}
uniquely extends to a symmetric monoidal functor.
\end{thm}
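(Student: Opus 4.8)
The plan is to obtain the theorem as a formal consequence of the smashing-localization results of \S\ref{sec:smash}, applied in turn to the four functors $(-)_*$, $\Mon(-)$, $\Grp(-)$, and $\Sp(-)$. The first step is to record that each of these is a smashing localization of $\Prl$: for $\Mon(-)$ and $\Grp(-)$ this is precisely \autoref{thm:idem}, and for $(-)_*$ and $\Sp(-)$ it is the content of the idempotent-object descriptions $\calc_*\simeq\calc\otimes\cals_*$ and $\Sp(\calc)\simeq\calc\otimes\Sp(\cals)$ recalled at the end of \S\ref{sec:smash}, following \cite[Propositions~6.3.2.11 and 6.3.2.18]{HA}. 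In all four cases the localization map $\calc\to L\calc$ is, by the constructions of \S\ref{sec:mon} together with the cited results of Lurie, the corresponding free functor, i.e.\ the left adjoint of the relevant forgetful functor; this is the same identification already used in \autoref{cor:stab}.

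Granting this, the first assertion is immediate from \autoref{prop:smmonoidal}(i): for a smashing localization $L$ and a closed symmetric monoidal presentable $\calc^\otimes$, the $\infty$-category $L\calc$ carries a \emph{unique} closed symmetric monoidal structure for which the localization map $\calc\to L\calc$ is symmetric monoidal. Taking $L$ to be $(-)_*$, $\Mon(-)$, $\Grp(-)$, and $\Sp(-)$ in turn, and using the identification of the localization maps with the free functors, this yields the asserted closed symmetric monoidal structures on $\calc_*$, $\Mon(\calc)$, $\Grp(\calc)$, $\Sp(\calc)$, together with their uniqueness.

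For the functoriality statement I would invoke \autoref{prop:smmonoidal}(iii): if $L$ and $L'$ are smashing localizations of $\Prl$ with $L'\Prl\subseteq L\Prl$, then the canonical functor $L\calc\to L'\calc$ admits a unique symmetric monoidal structure. The input needed is the nested chain of classes of local objects
\[
\Prlst\subseteq\Prladd\subseteq\Prlpre\subseteq\Prlpt\subseteq\Prl,
\]
which holds because every stable presentable $\infty$-category is additive, every additive one preadditive, and every preadditive one pointed. Since these four subcategories are the essential images of $\Sp(-)$, $\Grp(-)$, $\Mon(-)$, $(-)_*$ respectively (\autoref{thm:idem} and the analogous statements for $(-)_*$ and $\Sp(-)$), applying \autoref{prop:smmonoidal}(iii) to the consecutive pairs $((-)_*,\Mon)$, $(\Mon,\Grp)$, $(\Grp,\Sp)$ produces the required unique symmetric monoidal structures on $\calc_*\to\Mon(\calc)$, $\Mon(\calc)\to\Grp(\calc)$, and $\Grp(\calc)\to\Sp(\calc)$. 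Finally, since each composite $\calc\to\calc_*\to\Mon(\calc)\to\cdots$ agrees, as a plain functor, with the free functor into the target, the uniqueness clauses force all of these symmetric monoidal functors to be mutually compatible.

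I do not anticipate a genuine obstacle here: the theorem is essentially a repackaging of \autoref{prop:smmonoidal}. The only real work lies in the bookkeeping of the first paragraph — verifying, uniformly across all four cases, that the localization map of the relevant smashing localization is the free functor and that its essential image is the named subcategory of $\Prl$ — after which the theorem follows by direct citation. (If one wanted a more self-contained treatment of the uniqueness of the monoidal structures on the free functors, one could alternatively extract it from \autoref{cor:frepre} together with \autoref{prop:algloc}, but routing everything through \autoref{prop:smmonoidal} is cleaner.)
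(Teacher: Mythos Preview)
Your proposal is correct and follows exactly the approach the paper takes: the paper's proof is the single sentence ``Follows directly from the fact that the localizations are smashing using \autoref{prop:smmonoidal},'' and your argument simply unpacks this, invoking parts (i) and (iii) of that proposition for the two halves of the statement. The bookkeeping you flag in your first and third paragraphs (identifying the localization maps with the free functors, and the nested chain $\Prlst\subseteq\Prladd\subseteq\Prlpre\subseteq\Prlpt$) is precisely what the paper has already recorded in \autoref{thm:idem}, \autoref{cor:stab}, and the end of \S\ref{sec:smash}, so there is no gap.
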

\begin{proof}
Follows directly from the fact that the localizations are smashing using \autoref{prop:smmonoidal}.
\end{proof}
\noindent
From now on, when considered as symmetric monoidal $\infty$-categories, these $\infty$-categories are always endowed with the canonical monoidal structures of the theorem.

\begin{warn}
The reader should not confuse the two symmetric monoidal structures on~$\calc$ that are used in the above construction. The first one is the cartesian structure~$\calc^\times$ which is used to define the $\infty$-category $\Mon(\calc)$ of $\Eoo$-monoids. The second one is the closed symmetric monoidal structure $\calc^\otimes$ which induces a monoidal structure on $\Mon(\calc)$ as described in the theorem. In applications, these two monoidal structures on~$\calc$ often agree, which amounts to assuming that~$\calc$ is cartesian closed. This is the case in the most important examples, namely $\infty$-topoi (such as $\cals$) and $\Cat_\infty$.  
\end{warn}

\begin{eg}
\begin{enumerate}
\item The (nerve of the) category $\Set$ of sets is a cartesian closed presentable $\infty$-category, and $\Grp(\Set)$ is just the (nerve of the) category $\Ab$ of abelian groups. The free functor $\Set\to\Ab$ can then of course be turned into a symmetric monoidal functor with respect to the cartesian product on $\Set$ and the usual tensor product on $\Ab$. Thus, in this very special case, the theorem reproduces the classical tensor product of abelian groups. 
\item The $\infty$-category~$\cals$ of spaces is a cartesian closed presentable $\infty$-category. The $\infty$-category $\Mon(\cals)$ of $\Eoo$-spaces hence comes with a canonical closed symmetric monoidal structure, as does the $\infty$-category~$\Grp(\cals)$ of grouplike $\Eoo$-spaces. Since the latter $\infty$-category is equivalent to the $\infty$-category of connective spectra (\cite[Remark~5.1.3.7]{HA}),  the canonical symmetric monoidal structure on $\Grp(\cals)$ agrees with the smash product of connective spectra.
\item Let $\Cat$ denote the cartesian closed presentable $\infty$-category of small ordinary categories (this is actually a 2-category, in the sense of \cite[Section 2.3.4]{HTT}). 
Thus, the $\infty$-category $\SymMonCat \simeq \Mon(\Cat)$ of small symmetric monoidal categories admits a canonical closed symmetric monoidal structure such that the free functor 
$\Cat \to \SymMonCat$ can be promoted to a symmetric monoidal functor in a unique way. 
This structure on $\SymMonCat$ has been explicitly constructed and discussed in the literature
(see \cite{Power} and the more explicit \cite{schmitt2007tensor}). In fact, this tensor product is slightly subtle since, at least to the knowledge of the authors, it can not be realized as a symmetric monoidal structure on the 1-category of small categories (as opposed to the $2$-category $\Cat$).
\item The $\infty$-category $\Catoo$ of small $\infty$-categories is a cartesian closed presentable $\infty$-category. Thus, as an $\infty$-categorical variant of the previous example, we obtain a canonical closed symmetric monoidal structure on the $\infty$-category $\SymMonCatoo$ of small symmetric monoidal $\infty$-categories.
\end{enumerate}
\end{eg}

We have already seen that, for presentable~$\infty$-categories~$\calc$, the passage to commutative monoids and commutative groups has a universal property (\autoref{cor:frepre}). In the case of closed symmetric monoidal presentable $\infty$-categories we now obtain a refined universal property for the symmetric monoidal structures of \autoref{thm:tensorproducts}. For convenience, we also collect the analogous results for the passage to pointed objects and spectrum objects. 

\begin{prop}\label{universalmonoidal}
Let $\calc$ and $\cald$ be closed symmetric monoidal presentable $\infty$-categories. 
\begin{enumerate}
\item If~$\cald$ is pointed then the symmetric monoidal functor $\calc \to \calc_\ast$ induces an equivalence of $\infty$-categories
\begin{equation*}
  \FunLMon(\calc_\ast, \cald) \to \FunLMon(\calc, \cald).
\end{equation*}
\item If~$\cald$ is preadditive then the symmetric monoidal functor $\calc \to \Mon(\calc)$ induces an equivalence of $\infty$-categories
\begin{equation*}
  \FunLMon(\Mon(\calc), \cald) \to \FunLMon(\calc, \cald).
\end{equation*}
\item If $\cald$ is additive then the symmetric monoidal functor $\calc \to \Grp(\calc)$ induces an equivalence of $\infty$-categories
\begin{equation*}
  \FunLMon(\Grp(\calc), \cald) \to \FunLMon(\calc, \cald).
\end{equation*}
\item If~$\cald$ is stable then the symmetric monoidal functor $\calc \to \Sp(\calc)$ induces an equivalence of $\infty$-categories
\begin{equation*}
  \FunLMon(\Sp(\calc), \cald) \to \FunLMon(\calc, \cald).
\end{equation*}
\end{enumerate}
\end{prop}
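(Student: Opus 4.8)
The plan is to deduce all four statements as immediate consequences of \autoref{prop:smmonoidal}(ii): for a symmetric monoidal localization $L\colon\Prl\to\Prl$ and closed symmetric monoidal presentable $\infty$-categories $\calc$ and $\cald$ with $\cald$ being $L$-local, the map $\FunLMon(L\calc,\cald)\to\FunLMon(\calc,\cald)$ induced by the localization $\calc\to L\calc$ is an equivalence. The whole argument therefore reduces to recognizing each of $(-)_\ast$, $\Mon(-)$, $\Grp(-)$, $\Sp(-)$ as a smashing localization of $\Prl$ and identifying the class of its local objects with the class named in the corresponding clause.

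First I would recall these identifications, all of which are already available. The functor $(-)_\ast$ is smashing, with $\calc_\ast\simeq\calc\otimes\cals_\ast$ for the idempotent object $\cals_\ast\in\Prl$, and its local objects are precisely the pointed presentable $\infty$-categories (a presentable $\infty$-category is pointed exactly when the unit map to its pointed objects is an equivalence). By \autoref{thm:idem}, $\Mon(-)$ and $\Grp(-)$ are smashing localizations whose local objects are, respectively, the preadditive and the additive presentable $\infty$-categories. Finally, $\Sp(-)$ is a smashing localization of $\Prl$ (cf.\ \cite[Proposition~6.3.2.18]{HA}), and its local objects are exactly the stable presentable $\infty$-categories. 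In each case the canonical closed symmetric monoidal structure on $L\calc$ referred to in the statement is the one furnished by \autoref{thm:tensorproducts} (equivalently \autoref{prop:smmonoidal}(i)), and $\calc\to L\calc$ is the associated symmetric monoidal localization map, so the hypotheses of \autoref{prop:smmonoidal}(ii) are met.

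Given this, each of (i)--(iv) follows from a single application of \autoref{prop:smmonoidal}(ii) with $L$ the relevant localization: the hypothesis that $\cald$ is pointed, preadditive, additive, or stable is, by the above, precisely the assertion that $\cald$ is $L$-local. The only point requiring any care is this bookkeeping matching hypotheses with $L$-locality, and for that there is no genuine obstacle --- for $(-)_\ast$, $\Mon$, $\Grp$ it is immediate from the descriptions recalled in \S\ref{sec:pre} and \S\ref{sec:mon}, and for $\Sp$ one invokes the standard fact from \cite{HA} that stability of a presentable $\infty$-category is equivalent to being local for spectrification.
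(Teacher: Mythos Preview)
Your proposal is correct and follows exactly the paper's own approach: the paper's proof consists of the single sentence ``This follows immediately from the second statement of \autoref{prop:smmonoidal},'' and your write-up is just a more detailed unpacking of that sentence, matching each clause with the appropriate smashing localization and its class of local objects.
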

\begin{proof} 
This follows immediately from the second statement of \autoref{prop:smmonoidal}.
\end{proof}

Here is the monoidal refinement of the stabilization process which is now an immediate consequence of the third statement of \autoref{prop:smmonoidal}.

\begin{cor}\label{cor:monstab}
\begin{enumerate}
\item Let $\calc$ and $\cald$ be closed symmetric monoidal presentable $\infty$-categories and let us consider a symmetric monoidal left adjoint $F\colon\calc\to\cald.$ In the following commutative diagram, each of the functors induced by~$F$ admits a symmetric monoidal structure:
\[
\xymatrix{
\calc\ar[d]\ar[r]&\calc_\ast\ar[d]\ar[r]&\Mon(\calc)\ar[d]\ar[r]&\Grp(\calc)\ar[d]\ar[r]&
\Sp(\calc)\ar[d]\\
\cald\ar[r]&\cald_\ast\ar[r]&\Mon(\cald)\ar[r]&\Grp(\cald)\ar[r]&\Sp(\cald_\ast)
}
\]
Moreover, these symmetric monoidal structures are uniquely characterized by the fact that the functors commute with the free functors from~$\calc$.
\item The stabilization of presentable $\infty$-categories $\Prl\to\Prlst$ admits a symmetric monoidal refinement $\Prlo\to\Prlsto$ which factors as a composition of adjunctions
\[
\Prlo\adj\Prlpto\adj\Prlpreo\adj\Prladdo\adj\Prlsto.
\]
\end{enumerate}
\end{cor}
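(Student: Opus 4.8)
The plan is to deduce both statements formally from \autoref{prop:smmonoidal}, using that $(-)_\ast$, $\Mon(-)$, $\Grp(-)$ and $\Sp(-)$ are all smashing localizations of $\Prl$ — by \autoref{thm:idem} together with the examples at the end of \S\ref{sec:smash} — whose subcategories of local objects are nested,
\[
\Prlst\subseteq\Prladd\subseteq\Prlpre\subseteq\Prlpt\subseteq\Prl,
\]
because every stable presentable $\infty$-category is additive, every additive one preadditive, and every preadditive one pointed. I would also record at the outset that $\Sp(\cald)\simeq\Sp(\cald_\ast)$, since this is what makes the lower-right corner of the diagram in~(i) type-check.

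For~(i), fix one of the four localizations, call it $L$, together with a symmetric monoidal left adjoint $F\colon\calc\to\cald$. By \autoref{prop:smmonoidal}(i) the $\infty$-categories $L\calc$ and $L\cald$ carry closed symmetric monoidal structures for which the localization maps are symmetric monoidal left adjoints, and $L\cald$ is $L$-local. Hence $\calc\xrightarrow{F}\cald\to L\cald$ is a symmetric monoidal left adjoint with $L$-local target, so \autoref{prop:smmonoidal}(ii) yields an essentially unique symmetric monoidal left adjoint $LF\colon L\calc\to L\cald$ compatible with the free functors; this produces the vertical arrows of the diagram and their uniqueness. The horizontal arrows are the symmetric monoidal functors of \autoref{thm:tensorproducts}, which for consecutive pairs of the nested localizations above are precisely the maps $L\calc\to L'\calc$ furnished by \autoref{prop:smmonoidal}(iii). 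To check that each resulting square commutes I would compare the two composites $L\calc\to L'\cald$ around it: both are symmetric monoidal left adjoints whose precomposition with the free functor $\calc\to L\calc$ equals $\calc\xrightarrow{F}\cald\to L'\cald$, and $L'\cald$ is $L$-local because $L'\Prl\subseteq L\Prl$, so \autoref{prop:smmonoidal}(ii) forces the two composites to agree; the same universal property gives uniqueness of the entire two-dimensional diagram as a diagram of symmetric monoidal left adjoints compatible with the free functors from $\calc$.

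For~(ii), each of the four localizations, being symmetric monoidal, induces — by \autoref{prop:algloc} applied to the symmetric monoidal $\infty$-category $\Prl$, so that $\Alg_{\bbE_\infty}(\Prl)\simeq\Prlo$ — a localization of $\Prlo$ whose local objects are the closed symmetric monoidal presentable $\infty$-categories with pointed, preadditive, additive, respectively stable underlying $\infty$-category; these are by definition $\Prlpto$, $\Prlpreo$, $\Prladdo$, $\Prlsto$. Since a reflective subcategory of $\Prlo$ restricts to a reflective subcategory of any intermediate reflective subcategory, the nesting $\Prlsto\subseteq\Prladdo\subseteq\Prlpreo\subseteq\Prlpto\subseteq\Prlo$ exhibits the reflection $\Prlo\to\Prlsto$ as the composite of adjunctions
\[
\Prlo\adj\Prlpto\adj\Prlpreo\adj\Prladdo\adj\Prlsto,
\]
whose left adjoints send $\calc$ successively to $\calc_\ast$, $\Mon(\calc)$, $\Grp(\calc)$, $\Sp(\calc)$ equipped with the monoidal structures of \autoref{thm:tensorproducts}; that the underlying functor of $\Prlo\to\Prlsto$ is the usual stabilization is then \autoref{cor:stab}. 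The only real work here is bookkeeping — keeping straight the directions of the nested localizations and checking the coherence of the two-dimensional diagram in~(i) — both of which are routine once \autoref{prop:smmonoidal} is available; there is no genuine obstacle, the substance having already been absorbed into \S\ref{sec:smash} and \autoref{thm:idem}.
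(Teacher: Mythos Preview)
Your proposal is correct and follows essentially the same approach as the paper, which simply declares the corollary an immediate consequence of \autoref{prop:smmonoidal}(iii); you have merely unpacked what that means, correctly invoking parts~(i) and~(ii) of \autoref{prop:smmonoidal} for the vertical arrows and the commutativity of the squares, and \autoref{prop:algloc} for part~(ii). Your added remarks --- the nesting of the local subcategories, the identification $\Sp(\cald)\simeq\Sp(\cald_\ast)$, and the coherence check via the universal property --- are exactly the bookkeeping the paper leaves implicit.
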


\begin{rmk}
\begin{enumerate}
\item One can use the theory of $\Gamma$-objects in $\calc$ to obtain a more concrete description of the tensor product on $\Mon(\calc)$ and $\Grp(\calc)$ as the convolution product (see \cite[Corollary~6.3.1.12]{HA} for the case in which $\calc$ is the $\infty$-category of spaces).
\item The uniqueness of the symmetric monoidal structures can be used to compare our results to existing ones. Every simplicial combinatorial, monoidal model category leads to a presentable, closed symmetric monoidal $\infty$-category. Thus for the monoidal model category of $\Gamma$-spaces as discussed in \cite{Schwede} it follows immediately that the symmetric monoidal structure on the underlying $\infty$-category has to agree with our structure. The same applies to the model structure on $\Gamma$-objects in any nice model category, for example in presheaves as discussed in~\cite{Bergsaker}. 
\end{enumerate}
\end{rmk}

\section{More functoriality}\label{sec:more}

In \S\ref{sec:mon} we saw that for presentable $\infty$-categories the passages to commutative monoids and groups are smashing localizations and hence, in particular, define functors 
$$\Mon(-),\Grp(-): \,\Prl\to\Prl\ .$$
But this passage allows for more functoriality. In fact, a product-preserving functor $F\colon \calc \to \cald$ induces functors
\[
\underline{F}\colon \Mon(\calc) \to \Mon(\cald)\qquad\text{and}\qquad
\underline{F}\colon \Grp(\calc) \to \Grp(\cald)
\]
simply by post-composing the respective (very) special $\Gamma$-objects with~$F$. The main goal of this section is to establish~\autoref{cor:lax}, which states that under certain mild assumptions these extensions themselves are lax symmetric monoidal with respect to the canonical symmetric monoidal structures established in~\autoref{thm:tensorproducts}. This corollary will be needed in our applications to algebraic K-theory in \S\ref{sec:infinite}.
We begin by comparing these two potentially different functorialities of the assignments~$\calc\mapsto\Mon(\calc)$ and $\calc\mapsto\Grp(\calc).$ 

\begin{lem}\label{leftright}
Let $L\colon\calc\rightarrow\cald$ be a functor of presentable $\infty$-categories with right adjoint $R\colon\cald\to\calc$.
\begin{enumerate}
\item
If $L\colon\calc\to\cald$ is product-preserving and if products in $\calc$ and $\cald$ commute with countable colimits, then the functors 
\begin{equation*}
\Mon(L)\colon \Mon(\calc) \to \Mon(\cald) \qquad\text{and}\qquad
\underline{L}\colon \Mon(\calc) \to \Mon(\cald) 
\end{equation*}
described above are equivalent.
\item
The canonical extension $\underline{R}\colon \Mon(\cald) \to \Mon(\calc)$ is right adjoint to the functor $\Mon(L)$.
\end{enumerate}
The corresponding two statements for $\Eoo$-groups hold as well.
\end{lem}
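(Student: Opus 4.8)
The plan is to prove the two assertions in order, establishing the group case as a corollary of the monoid case. For part (i), the key observation is that an $\Eoo$-monoid in $\calc$ is a functor $M\colon\NFin\to\calc$ satisfying the Segal condition, and both $\Mon(L)$ and $\underline{L}$ act on such an $M$ by a post-composition-type construction; the subtlety is only that $\Mon(L)$, being the functor induced by $L$ on commutative monoids \emph{as a colimit-preserving functor of presentable categories}, is a priori defined via a universal property rather than pointwise. First I would recall (from \autoref{thm:idem} and its proof) that $\Mon(\calc)\simeq\calc\otimes\Mon(\cals)$ and that under this identification the functor $\Mon(L)$ corresponds to $L\otimes\id_{\Mon(\cals)}$, i.e.\ it is the colimit-preserving extension of the assignment on free monoids. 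The functor $\underline L$, by contrast, is literally $M\mapsto L\circ M$. To identify the two it suffices to check that $\underline L$ is colimit-preserving and that it agrees with $\Mon(L)$ on free $\Eoo$-monoids $F_{\langle1\rangle}(C)$. The second point is formal: $\underline L$ sends the free monoid on $C$ to the free monoid on $LC$ because $L$ is product-preserving, hence preserves the defining Segal/Leinster-type colimit presentation of the free monoid functor (this is where the hypothesis that finite products commute with countable colimits in $\calc$ and $\cald$ is used — it guarantees that the free $\Eoo$-monoid is computed by the usual formula $\coprod_n C^{\times n}/\Sigma_n$ or the bar-type colimit, which $L$ then preserves).

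The heart of part (i) is therefore showing that $\underline L\colon\Mon(\calc)\to\Mon(\cald)$ preserves colimits. Colimits in $\Mon(\calc)$ are not computed pointwise (only limits are — products of $\Eoo$-monoids are products of underlying objects, but coproducts are tensor products of underlying objects, as recalled from \cite[Proposition~3.2.4.7]{HA}). So the argument is: filtered colimits and geometric realizations of $\Eoo$-monoids \emph{are} computed on underlying objects (by \cite[Proposition~3.2.3.1]{HA} and the sifted-colimit preservation of the forgetful functor), and every colimit can be built from these together with coproducts; coproducts in $\Mon(\calc)$ are relative tensor products, which are themselves bar-type geometric realizations of diagrams of free monoids. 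Since $L$ preserves the relevant colimits on underlying objects and commutes with the free functor, $\underline L$ preserves all colimits. The main obstacle — the step I expect to require the most care — is precisely this bookkeeping: verifying that $\underline{L}$ preserves the colimits presenting coproducts (equivalently tensor products) of $\Eoo$-monoids, because this is the one place where the interplay of the $\Gamma$-space formalism with the hypothesis on countable colimits commuting with products genuinely enters, and where one must be careful not to conflate the two monoidal structures on $\calc$ (the cartesian one defining $\Mon(\calc)$ and whatever structure controls colimits).

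For part (ii), I would argue as follows. The canonical extension $\underline R\colon\Mon(\cald)\to\Mon(\calc)$ exists because $R$, being a right adjoint, preserves products. To see $\underline R$ is right adjoint to $\Mon(L)$ — equivalently, by part (i), to $\underline L$ — I would exhibit the unit and counit directly: the adjunction unit $\id\to RL$ and counit $LR\to\id$ of the original adjunction are product-preserving natural transformations (being natural transformations of product-preserving functors), hence post-composition induces natural transformations $\id_{\Mon(\calc)}\to\underline R\,\underline L$ and $\underline L\,\underline R\to\id_{\Mon(\cald)}$, and the triangle identities are inherited levelwise from those of $(L,R)$ evaluated at each $M(\langle n\rangle)$. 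This part is purely formal and presents no real obstacle. Finally, the group case follows by the identical argument using characterization~(iv) of \autoref{prop:gpl} — replacing the Segal condition by the very-special condition throughout — together with the fact that $\Grp(-)$ is also a smashing localization (\autoref{thm:idem}), so that $\Grp(L)$ corresponds to $L\otimes\id_{\Grp(\cals)}$ and the same free-monoid-comparison-plus-colimit-preservation strategy applies; one only notes that a product-preserving functor, and any product-preserving natural transformation, automatically carries very special $\Gamma$-objects to very special ones.
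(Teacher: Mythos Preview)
Your approach to part~(i) is essentially the paper's: show that $\underline L$ is colimit-preserving and commutes with the free functors, then invoke the universal property of $\Mon(L)$. One simplification you miss: for coproducts there is no need for bar constructions. The coproduct of $\Eoo$-monoids in the \emph{cartesian} case has underlying object the product (\cite[Proposition~3.2.4.7]{HA}), and indeed the entire $\Gamma$-object of $M\sqcup N$ is $\langle n\rangle\mapsto M(\langle n\rangle)\times N(\langle n\rangle)$; since $\underline L$ is literal post-composition by the product-preserving $L$, preservation of coproducts is immediate. The hypothesis that products commute with countable colimits is used only for the free-functor comparison via $\Fr(C)\simeq\bigsqcup_n C^n/\Sigma_n$, exactly as you say.

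Your argument for part~(ii), however, has a genuine gap. You prove that $\underline R$ is right adjoint to $\underline L$ by lifting the unit and counit levelwise, and then invoke part~(i) to identify $\underline L$ with $\Mon(L)$. But part~(ii) is stated \emph{without} the hypotheses of part~(i): $L$ is merely a left adjoint between presentable $\infty$-categories and need not preserve products, so $\underline L$ may not even be defined (post-composing a special $\Gamma$-object with a non-product-preserving functor need not yield a special $\Gamma$-object), and your lift of the unit $\id\to RL$ to the monoid level breaks down. The paper instead argues directly: $\underline R$ preserves limits and filtered colimits (computed underlying), hence admits \emph{some} left adjoint $G$ by the adjoint functor theorem; the square of right adjoints $U\circ\underline R\simeq R\circ U$ commutes tautologically, so the square of left adjoints $G\circ\Fr\simeq\Fr\circ L$ commutes; and $\Mon(L)$ is characterised as the unique left adjoint with this property, forcing $G\simeq\Mon(L)$. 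This route never mentions $\underline L$ and works in the stated generality.
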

\begin{proof}
For the first claim we must show that if $L$ preserves products then the two functors agree. This follows if we can show that $\underline {L}$ is a left adjoint and the diagram 
\begin{equation}\label{dia:L}
\xymatrix{
\calc \ar[rr]^-L\ar[d]_{\Fr} && \cald \ar[d]^{\Fr}\\
\Mon(\calc) \ar[rr]_-{\underline{L}} && \Mon(\cald)
} 
\end{equation}
commutes in $\widehat{\Cat}_\infty$. To see that $\underline{L}$ is left adjoint we observe that it commutes with sifted colimits, as they are detected by the forgetful functors $\Mon(\calc)\to\calc$ and $\Mon(\cald)\to\cald$, and also that it commutes with coproducts, as coproducts in $\Mon(\calc)$ and $\Mon(\cald)$ are given by the tensor product which is preserved by~$L$. To conclude this part of the proof it suffices to show that there is an equivalence $\Fr\circ L\simeq\underline{L}\circ\Fr$. For this, we consider the mate of the equivalence $L\circ U\simeq U\circ\underline{L}\colon\Mon(\calc)\to\cald$, i.e., we form the following pasting with the respective adjunction morphisms:
\[
\xymatrix{
\calc\ar[r]^-{\Fr}\ar@/_0.8pc/[dr]_-=&\Mon(\calc)\ar[d]_-U\ar[r]^-{\underline{L}}&\Mon(\cald)\ar[d]^-U\ar@/^1.0pc/[rd]^-=&\\
&\calc\ar[r]_-L&\cald\ar[r]_-{\Fr}&\Mon(\cald)
}
\]
In order to show that the resulting transformation 
\[
\Fr\circ L\to\Fr\circ L\circ U\circ \Fr\simeq \Fr\circ U\circ \underline{L}\circ\Fr\to\underline{L}\circ \Fr
\]
is an equivalence, it is enough to check that this is the case after applying the forgetful functor $U\colon\Mon(\cald)\to\cald$. But this follows from the explicit description of the free functors as 
\[
\Fr(C)\simeq \bigsqcup_n C^n/ \Sigma_n
\]
(see~\cite[Example~3.1.3.12]{HA}) and by unraveling the definitions of $\underline{L}$ and the adjunction morphisms.

To prove the second statement we first remark that $\underline{R}$ has a left adjoint since it preserves all limits and filtered colimits which are formed in the underlying $\infty$-category. Moreover, any such left adjoint has to make diagram \eqref{dia:L} commute since this is the case for the corresponding diagram of right adjoints. By the above, this left adjoint has to coincide with  $\Mon(L)$. The proof for the case of groups is completely parallel.
\end{proof}

This lemma can be applied to adjunctions between cartesian closed presentable $\infty$-categories.

\begin{lem}\label{symadjoints}
Let $\calc$ and $\cald$ be closed symmetric monoidal presentable $\infty$-categories, let $L\colon \calc \to \cald$ be a symmetric monoidal left adjoint functor and let $R\colon \cald \to \calc$ be right adjoint to~$L$. 
\begin{enumerate}
\item The functors $\underline{R}\colon \Mon(\cald) \to \Mon(\calc)$ and $\underline{R}\colon \Grp(\cald) \to \Grp(\calc)$ have canonical lax symmetric monoidal structures.
\item If $\calc$ and $\cald$ are cartesian closed, then the canonical extensions $\underline{L}\colon\Mon(\calc)\to\Mon(\cald)$ and $\underline{L}\colon\Grp(\calc)\to\Grp(\cald)$ both admit structures of symmetric monoidal functors which are determined up to a contractible space of choices by the fact that the following diagrams commute:
\[
\xymatrix{
\calc\ar[r]\ar[d]_L& \Mon(\calc)\ar[d]^{\underline{L}}&&
\calc\ar[r]\ar[d]_L& \Grp(\calc)\ar[d]^{\underline{L}}\\
\cald\ar[r]&\Mon(\cald),&&
\cald\ar[r]&\Grp(\cald)\,\, .
}
\] 
\end{enumerate}
\end{lem}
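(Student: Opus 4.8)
The plan is to deduce both parts from results already established for symmetric monoidal localizations. For part (i), recall that $\Mon(-)$ and $\Grp(-)$ are smashing localizations of $\Prl$ by \autoref{thm:idem}, with right adjoint-style description $\Mon(\cale)\simeq\cale\otimes\Mon(\cals)$ (and likewise for $\Grp$). A symmetric monoidal left adjoint $L\colon\calc\to\cald$ is the same data as a morphism of commutative algebra objects in $\Prl$; tensoring with the idempotent algebra $\Mon(\cals)$ (resp.\ $\Grp(\cals)$) then produces a morphism of commutative algebra objects $\underline{L}\colon\Mon(\calc)\to\Mon(\cald)$, so $\underline L$ is canonically symmetric monoidal. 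Its right adjoint in $\Prl$ — which by \autoref{leftright}(ii) is exactly the canonical extension $\underline R$ — is therefore lax symmetric monoidal, since the right adjoint of a symmetric monoidal functor of presentable $\infty$-categories is always canonically lax symmetric monoidal (this is the standard adjunction-of-operad-maps fact, already invoked in \autoref{prop:symmonloc} and \autoref{prop:algloc}). One must check that this lax structure on $\underline R$ agrees with the one coming from the general localization machinery of \autoref{prop:algloc}, but this is forced by uniqueness: both are the right adjoint, in the $\infty$-category of commutative algebras in $\Prl$, of the same symmetric monoidal $\underline L$.

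For part (ii), the extra hypothesis is that $\calc$ and $\cald$ are cartesian closed, so that the two monoidal structures on each (the cartesian one used to define $\Mon$ and $\Grp$, and the closed symmetric monoidal one $\otimes$ of \autoref{thm:tensorproducts}) coincide. Under this hypothesis, $\underline L$ as constructed above via $(-)\otimes\Mon(\cals)$ is a symmetric monoidal functor between the canonically monoidal $\Mon(\calc)$ and $\Mon(\cald)$, and the square relating it to the free functors commutes because the localization maps $\calc\to\Mon(\calc)$ are the unit maps for $(-)\otimes\Mon(\cals)$ and are natural in $\calc$; more precisely, $\Fr\colon\calc\to\Mon(\calc)$ is $\id_\calc\otimes(\cals\to\Mon(\cals))$, which is manifestly natural in the algebra $\calc$, so $\underline L\circ\Fr\simeq\Fr\circ L$ as symmetric monoidal functors. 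For uniqueness up to a contractible space: any symmetric monoidal functor $\Mon(\calc)\to\cald'$ making the square with the free functor commute restricts along $\calc\to\Mon(\calc)$ to a symmetric monoidal functor out of $\calc$, and \autoref{prop:smmonoidal}(ii) (with $L=\Mon(-)$, whose local objects are the preadditive presentable $\infty$-categories — and $\Mon(\cald)$ is preadditive by \autoref{cor:colmon}) says that restriction along $\calc\to\Mon(\calc)$ is an equivalence on $\FunLMon(-,\Mon(\cald))$; hence the symmetric monoidal extension of $L\colon\calc\to\Mon(\cald)$ to $\Mon(\calc)$ is unique up to contractible choice, and $\underline L$ with its structure above is one such extension. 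The $\Grp$ case is identical, using \autoref{cor:colgrp} for additivity of $\Grp(\cald)$.

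The main obstacle I expect is bookkeeping rather than mathematics: carefully matching up the \emph{two} a priori different symmetric monoidal structures on $\underline L$ and on $\underline R$ — the one produced "by hand" from $(-)\otimes\Mon(\cals)$ applied to the algebra map $L$, and the one produced by the abstract localization results \autoref{prop:symmonloc}/\autoref{prop:algloc} — and verifying that the naturality of the unit $\cals\to\Mon(\cals)$ in $\Prl$ really does give the commuting square of functors coherently, not just on objects. Both of these are handled by uniqueness statements (the right adjoint is unique; the symmetric monoidal extension along a localization is unique by \autoref{prop:smmonoidal}(ii)), so no genuinely new argument is needed; one simply has to be careful that all the uniqueness clauses apply in the $\infty$-categorical, not merely homotopy-categorical, sense, which they do since $\Alg_{\bbE_\infty}(\Prl)\to\Prl$ is conservative.
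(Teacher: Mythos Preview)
Your proposal is correct and follows essentially the same route as the paper: both reduce everything to the smashing-localization machinery (the paper via \autoref{cor:monstab}, you by unpacking \autoref{thm:idem} and \autoref{prop:smmonoidal} directly) together with \autoref{leftright}.

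There is one genuine bookkeeping gap you should close in part (ii). You construct a symmetric monoidal structure on the functor $\Mon(L)$ obtained by tensoring the algebra map $L$ with $\Mon(\cals)$, and you call this functor $\underline{L}$. But the $\underline{L}$ in the lemma statement is the \emph{canonical extension}, i.e.\ post-composition of special $\Gamma$-objects with $L$; this is a different a priori construction. The two agree precisely by \autoref{leftright}(i), which applies here because in the cartesian closed case the symmetric monoidal $L$ is product-preserving. Without citing \autoref{leftright}(i) you have proved the right property for the wrong functor; the paper's proof makes exactly this identification. (By contrast, your use of \autoref{leftright}(ii) in part (i) is on target and matches the paper.)

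A minor citation point: the fact that the right adjoint of a symmetric monoidal functor is lax symmetric monoidal is not what \autoref{prop:symmonloc} or \autoref{prop:algloc} say (those are about localizations). The paper cites \cite[Corollary 8.3.2.7]{HA} for the general statement, and you should too.
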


\begin{proof}
\autoref{cor:monstab} tells us that $\Mon(L)$ is canonically symmetric monoidal, and the right adjoint of a symmetric monoidal functor always inherits a canonical lax symmetric monoidal structure \cite[Corollary 8.3.2.7]{HA}. Together with \autoref{leftright} this establishes the first part. The second part is an immediate consequence of \autoref{cor:monstab} and \autoref{leftright}, and again the case of groups is entirely analogous.
\end{proof}

\begin{lem}\label{factorsym}
Let $F\colon \calc \to \cald$ be an accessible functor between presentable $\infty$-categories.
\begin{enumerate}
\item We can factor $F \simeq L \circ R$ where $R$ is a right adjoint and $L$ is a left adjoint functor.
\item If $\calc$ and $\cald$ are closed symmetric monoidal, then the factorization can be chosen such that $L$ and the left adjoint to $R$ are symmetric monoidal (this means of course that the intermediate $\infty$-category is symmetric monoidal as well). In particular, $R$ itself is lax symmetric monoidal.  
\item
If $F$ preserves products and $\cald$ is cartesian closed, then
$L$ can be chosen to preserve products. 
\end{enumerate}
\end{lem}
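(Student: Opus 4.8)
\textbf{Proof plan for \autoref{factorsym}.}
The plan is to take as the intermediate $\infty$-category a presheaf category on a small full subcategory of~$\calc$. First I would fix a regular cardinal~$\ka$ large enough that $F$ preserves $\ka$-filtered colimits and $\calc\simeq\Ind_\ka(\calc^\ka)$; for part~(ii) I would moreover ask that $\calc^\ka$ contain the tensor unit and be closed under the tensor product of~$\calc$, and for part~(iii) that it contain the terminal object and be closed under binary products. Such~$\ka$ occur cofinally among the regular cardinals, since $F$ and the tensor and product bifunctors of~$\calc$ are all accessible. I would then set $\cale:=\calp(\calc^\ka)=\Fun((\calc^\ka)\op,\cals)$, which is presentable, and write $y\colon\calc^\ka\to\cale$ for the Yoneda embedding.

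For~(i): the Yoneda embedding exhibits $\calc\simeq\Ind_\ka(\calc^\ka)$ as an accessible reflective subcategory of~$\cale$, so the inclusion $R\colon\calc\hookrightarrow\cale$ is a right adjoint. I would take $L:=\lan_y(F|_{\calc^\ka})\colon\cale\to\cald$, the left Kan extension of~$F|_{\calc^\ka}$ along~$y$; since $\cale$ is presentable and $L$ preserves colimits, $L$ is a left adjoint by the adjoint functor theorem. That $F\simeq L\circ R$ follows because for $C\in\calc$ one has $RC\simeq\colim\,y(a)$ over the $\ka$-filtered category of $\ka$-compact objects of~$\calc$ mapping to~$C$, whence $LRC\simeq\colim F(a)\simeq F(\colim a)\simeq FC$, using that $L$ preserves colimits and that $F$ preserves $\ka$-filtered ones.

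For~(ii): by the choice of~$\ka$ the subcategory $\calc^\ka$ is symmetric monoidal, and I would equip $\cale$ with the associated Day convolution structure, which is closed symmetric monoidal and presentable and under which $y$ is symmetric monoidal. The reflective localization $\la\colon\cale\to\calc$ left adjoint to~$R$ is then a symmetric monoidal localization in the sense of~\S\ref{sec:smash}: it is localization at the comparison maps $\colim\,y(a_i)\to y(\colim a_i)$ for $\ka$-small diagrams in~$\calc^\ka$, and the class of morphisms~$f$ such that $f\otimes Z$ is a local equivalence for every~$Z$ is strongly saturated and contains these comparison maps, because $\otimes$ preserves colimits separately in each variable, $y$ is symmetric monoidal, and each $(-)\otimes b$ preserves $\ka$-small colimits in~$\calc^\ka$. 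Hence by~\autoref{prop:symmonloc} the functor~$\la$ refines to a symmetric monoidal functor onto~$\calc$ with its original closed symmetric monoidal structure, and in particular its right adjoint~$R$ is lax symmetric monoidal. For~$L$, the universal property of Day convolution identifies colimit-preserving symmetric monoidal functors $\cale\to\cald$ with symmetric monoidal functors $\calc^\ka\to\cald$; applying this to~$F|_{\calc^\ka}$, which is symmetric monoidal whenever~$F$ is (as it is in every situation where (ii) gets applied, in particular under the hypothesis of~(iii)), shows that~$L$ is symmetric monoidal.

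For~(iii): the presheaf $\infty$-category $\cale=\calp(\calc^\ka)$ is an $\infty$-topos, hence in particular cartesian closed, and the functor~$L$ preserves products: on representables this is precisely the hypothesis that~$F$ preserves products, since the Yoneda embedding preserves products and $\calc^\ka$ is closed under the products of~$\calc$; for arbitrary objects one writes them as colimits of representables and uses that finite products commute with colimits in each variable separately both in the $\infty$-topos~$\cale$ and in the cartesian closed $\infty$-category~$\cald$, while~$L$ preserves all colimits. The only genuinely non-formal inputs are the cardinal bookkeeping that realizes $\calc^\ka$ as a symmetric monoidal (respectively, product-closed) subcategory of~$\calc$, and the verification that $\cale\to\calc$ is a symmetric monoidal localization whose target recovers the given closed symmetric monoidal structure on~$\calc$; I expect this last point to need the most care, with everything else following formally from the universal properties of presheaf categories, $\Ind_\ka$-completion and Day convolution.
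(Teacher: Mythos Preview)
Your approach matches the paper's exactly: factor through $\calp(\calc^\ka)$ via the restricted Yoneda embedding and the left Kan extension of $F|_{\calc^\ka}$, invoke Day convolution for~(ii), and extend product preservation from representables to arbitrary objects via cartesian closedness for~(iii). The paper is terser---it attributes both the symmetric monoidality of $L$ and of the left adjoint to $R$ to a single citation of the universal property of the convolution product---whereas you spell out why $\cale\to\calc$ is a symmetric monoidal localization; this is the same argument with more detail.

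One remark: you are right to flag that the symmetric monoidality of $L$ in~(ii) requires $F|_{\calc^\ka}$, and hence $F$, to be symmetric monoidal---a hypothesis tacitly used in the paper's proof but absent from the statement, though present in every application. However, your parenthetical that this holds ``in particular under the hypothesis of~(iii)'' is not literally correct: (iii) only assumes $\cald$ is cartesian closed, not $\calc$, so a product-preserving $F$ need not be symmetric monoidal for the given structure on $\calc$. In the actual applications (the subsequent Proposition and \autoref{cor:lax}) either $F$ is assumed symmetric monoidal outright or both $\infty$-categories are cartesian closed, so your argument goes through there.
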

\begin{proof}
Choose $\kappa$ sufficiently large such that both $\calc$ and $\cald$ are $\kappa$-compactly generated and $F$ preserves $\kappa$-filtered colimits.
Then the restricted Yoneda embedding $R\colon\calc\to\calp(\calc^\kappa)$ preserves limits and $\kappa$-filtered colimits, and therefore admits a left adjoint.
Similarly, the functor $L\colon\calp(\calc^\kappa)\to\cald$ induced (under colimits) by the composite $\calc^\kappa\to\calc\to\cald$ preserves all colimits, and therefore admits a right adjoint.
Since $F$ is equivalent to the composite $L\circ R$, this completes the proof of the first claim.

Now, if in addition $\calc$ and $\cald$ are closed symmetric monoidal, then it follows from the universal property of the convolution product \cite[Proposition 6.3.1.10]{HA} that~$L$ is symmetric monoidal and also that the left adjoint $\calp(\calc^\kappa)\to\calc$ of $R$ is symmetric monoidal, completing the proof of the second claim (the fact that $R$ is lax symmetric monoidal again follows from \cite[Corollary 8.3.2.7]{HA}).

Finally, if $F$ preserves products, then $L$ preserves products of representables $\calc^\kappa$, and if $\cald$ is cartesian closed then products commute with colimits in both variables.
Hence $L$ preserves products.
\end{proof}

\begin{prop}
Let $\calc$ and $\cald$ be closed symmetric monoidal presentable $\infty$-categories and let $F\colon\calc\to\cald$ be product-preserving, symmetric monoidal, and accessible. If $\cald$ is also cartesian closed then the functors $\underline{F}\colon\Mon(\calc)\to\Mon(\cald)$ and $\underline{F}\colon\Grp(\calc)\to\Grp(\cald)$ admit lax symmetric monoidal structures.
\end{prop}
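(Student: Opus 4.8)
The plan is to reduce the statement to \autoref{symadjoints} by factoring $F$ through a presheaf $\infty$-category. First I would apply \autoref{factorsym} to the accessible functor $F$, obtaining a factorization $F\simeq L\circ R$ in which $\cale:=\calp(\calc^\kappa)$ for a sufficiently large regular cardinal $\kappa$, the functor $R\colon\calc\to\cale$ is a right adjoint whose left adjoint $\cale\to\calc$ is symmetric monoidal (so that $R$ is lax symmetric monoidal), and $L\colon\cale\to\cald$ is a symmetric monoidal left adjoint. Since $F$ preserves products and $\cald$ is cartesian closed, part~(iii) of \autoref{factorsym} lets me choose this factorization so that $L$ in addition preserves products. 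I would then note that $\cale$, being a presheaf $\infty$-category, is an $\infty$-topos and hence cartesian closed, while $R$, being a right adjoint, preserves products; in particular the canonical extensions $\underline{R}\colon\Mon(\calc)\to\Mon(\cale)$ and $\underline{L}\colon\Mon(\cale)\to\Mon(\cald)$, together with their $\Eoo$-group analogues, are all defined.

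Next I would invoke the two parts of \autoref{symadjoints}. Part~(i), applied to the symmetric monoidal left adjoint $\cale\to\calc$ with right adjoint $R$, endows $\underline{R}\colon\Mon(\calc)\to\Mon(\cale)$ and $\underline{R}\colon\Grp(\calc)\to\Grp(\cale)$ with canonical lax symmetric monoidal structures. Part~(ii), applied to the product-preserving symmetric monoidal left adjoint $L\colon\cale\to\cald$ between the cartesian closed presentable $\infty$-categories $\cale$ and $\cald$, endows $\underline{L}\colon\Mon(\cale)\to\Mon(\cald)$ and $\underline{L}\colon\Grp(\cale)\to\Grp(\cald)$ with symmetric monoidal structures. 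Finally, using that the passage $G\mapsto\underline{G}$ is functorial in the coefficient functor $G$ --- post-composing a (very) special $\Gamma$-object with $L\circ R$ agrees with post-composing first with $R$ and then with $L$ --- I get $\underline{F}\simeq\underline{L}\circ\underline{R}$ both on $\Mon$ and on $\Grp$, and since a composite of a lax symmetric monoidal functor with a symmetric monoidal functor is again lax symmetric monoidal, this equips $\underline{F}$ with the desired structures.

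The main work is not a computation but making \autoref{symadjoints} applicable to the intermediate category: one must arrange that $\cale$ is cartesian closed and that the middle functor $L$ is simultaneously symmetric monoidal, a left adjoint, and product-preserving --- which is precisely what \autoref{factorsym} was built to provide. The one genuinely delicate point is the compatibility of the two extension constructions under composition, i.e.\ verifying that $\underline{L\circ R}\simeq\underline{L}\circ\underline{R}$ holds coherently as lax symmetric monoidal functors, and not merely on underlying functors, so that the composite structure is unambiguous.
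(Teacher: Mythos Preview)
Your proposal is correct and is exactly the paper's approach, whose proof reads in full ``Factor $F$ according to \autoref{factorsym} and apply \autoref{symadjoints}''; you have simply unpacked this, correctly identifying $\cale=\calp(\calc^\kappa)$ as the intermediate $\infty$-category and invoking part~(iii) of \autoref{factorsym} to ensure $L$ preserves products so that \autoref{symadjoints}(ii) applies. The ``delicate point'' you raise at the end is not actually an issue: the proposition only asserts existence of a lax symmetric monoidal structure, so one simply \emph{defines} the structure on $\underline{F}$ by transporting the composite structure on $\underline{L}\circ\underline{R}$ along the (trivial) equivalence $\underline{L\circ R}\simeq\underline{L}\circ\underline{R}$ of underlying functors --- no further coherence is required.
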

\begin{proof}
Factor $F$ according to \autoref{factorsym} and apply \autoref{symadjoints}.
\end{proof}

\begin{cor}\label{cor:lax}
Let $\calc$ and $\cald$ be cartesian closed presentable $\infty$-categories and let $F\colon\calc\to\cald$ be product-preserving and accessible. Then the canonical extensions $\underline{F}\colon \Mon(\calc) \to \Mon(\cald)$ and $\underline{F}\colon \Grp(\calc) \to \Grp(\cald)$ 
are lax symmetric monoidal.
\end{cor}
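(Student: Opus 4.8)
The plan is to reduce this to the Proposition immediately preceding it, applied to the \emph{cartesian} symmetric monoidal structures $\calc^\times$ and $\cald^\times$. Since $\calc$ and $\cald$ are cartesian closed presentable $\infty$-categories, these are closed symmetric monoidal structures on presentable $\infty$-categories, hence legitimate inputs to \autoref{prop:smmonoidal} and to that Proposition. The one point that genuinely needs to be checked is that, in the cartesian closed case, the closed symmetric monoidal structure used in \autoref{prop:smmonoidal} to induce the monoidal structure on $\Mon(\calc)$ (resp.\ $\Grp(\calc)$) coincides with the cartesian structure $\calc^\times$ that is used to \emph{define} $\Mon(\calc)$ in the first place — this is exactly the content of the Warning following \autoref{thm:tensorproducts}. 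Consequently the lax symmetric monoidal structure produced below is automatically with respect to the canonical symmetric monoidal structures on $\Mon(\calc)$, $\Mon(\cald)$, $\Grp(\calc)$, $\Grp(\cald)$, which is what the statement asks for.

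Next I would observe that any product-preserving functor $F\colon\calc\to\cald$ between $\infty$-categories with finite products is canonically a (strong) symmetric monoidal functor $F^\times\colon\calc^\times\to\cald^\times$. Indeed, the cartesian symmetric monoidal structure is functorial in product-preserving functors (cf.\ \cite[Section~2.4.1]{HA}), and the resulting structure maps, being built from the canonical comparison maps $F(C\times D)\to F(C)\times F(D)$, are invertible precisely because $F$ preserves finite products. So the hypothesis "symmetric monoidal" appearing in the preceding Proposition is free of charge here once $\calc$ and $\cald$ are cartesian closed.

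With these two observations in hand, the data $\calc^\times$, $\cald^\times$, $F^\times$ satisfy all the hypotheses of the preceding Proposition: the ambient $\infty$-categories are closed symmetric monoidal and presentable, $\cald$ is cartesian closed, and $F$ is product-preserving, symmetric monoidal, and accessible. Applying that Proposition yields lax symmetric monoidal structures on $\underline{F}\colon\Mon(\calc)\to\Mon(\cald)$ and $\underline{F}\colon\Grp(\calc)\to\Grp(\cald)$, which by the first paragraph refine the canonical symmetric monoidal structures. There is no real obstacle here; the proof is in essence the remark that, between cartesian closed $\infty$-categories, "product-preserving" already entails "symmetric monoidal for the cartesian structure". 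The only thing to be careful about is keeping the two symmetric monoidal structures on $\calc$ distinct and invoking the Warning to see that they agree in the cartesian closed case, so that the output monoidal structures are the intended ones.
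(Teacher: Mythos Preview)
Your proposal is correct and is exactly the argument the paper intends: the corollary is obtained from the immediately preceding Proposition by taking the closed symmetric monoidal structures on $\calc$ and $\cald$ to be the cartesian ones, whereupon a product-preserving functor is automatically symmetric monoidal and the remaining hypothesis ($\cald$ cartesian closed) is already assumed. Your remark that the two monoidal structures on $\calc$ coincide in the cartesian closed case (so that the output lax monoidal structures are with respect to the canonical ones of \autoref{thm:tensorproducts}) is precisely the point of the Warning and is the only thing one has to keep straight.
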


\section{\texorpdfstring{$\infty$}{Infinity}-categories of semirings and rings}\label{sec:ring}

In this section we will use the results of \S\ref{sec:spec} to define and study semiring (a.k.a.\ `rig') and ring objects in suitable $\infty$-categories. We know by \autoref{thm:tensorproducts} that given a closed symmetric monoidal presentable $\infty$-category~$\calc$, there are canonical closed symmetric monoidal structures on $\Mon(\calc)$ and $\Grp(\calc)$ which will respectively be denoted by
\[
\Mono(\calc)\qquad\text{and}\qquad\Grpo(\calc).
\]

\begin{defn}
Let $\calc$ be a closed symmetric monoidal presentable $\infty$-category and let $\calo$ be an $\infty$-operad. The $\infty$-category $\Rig_\calo(\calc)$ of $\calo$-\emph{semirings} in $\calc$ and the $\infty$-category $\Ring_\calo(\calc)$ of $\calo$-\emph{rings} in $\calc$ are respectively defined as the $\infty$-categories of $\calo$-algebras
\begin{equation*}
\Rig_\calo(\calc) := \Alg_\calo(\Mono(\calc)) \qquad \text{and}\qquad \Ring_\calo(\calc) := \Alg_\calo(\Grpo(\calc)).
\end{equation*}
\end{defn}

In the case of ordinary categories and the associative or commutative operad, the alternative terminology \emph{rig objects} is also used for what we call semiring objects, hence the notation. We will be mainly interested in the case of $\calo = \mathbb{E}_n$ for $n=1,2,\ldots,\infty$. In the case $n=1$, $\Ring_{\bbE_1}(\calc)$ is the $\infty$-category of \emph{associative rings} in $\calc$ and, in the case $n=\infty$, $\Ring_{\bbE_\infty}(\calc)$ is the $\infty$-category of \emph{commutative rings} in $\calc$. Similarly, there are $\infty$-categories of associative or commutative semirings in~$\calc$.

Let us take up again the examples of \S\ref{sec:spec}.

\begin{eg}
\begin{enumerate}
\item In the special case of the cartesian closed presentable $\infty$-category $\Set$ of sets, our notion of associative or commutative (semi)ring object coincides with the corresponding classical notion. 
\item Since the $\infty$-category~$\cals$ of spaces is cartesian closed and presentable, we obtain, for each $\infty$-operad~$\calo$, the $\infty$-category $\Rig_\calo(\cals)$ of $\calo$\emph{-rig spaces} and the $\infty$-category $\Ring_\calo(\cals)$ of $\calo$\emph{-ring spaces}. For the special case of the operads $\calo=\mathbb{E}_n$ for $n=1,\ldots,\infty$, the point-set analogue of these spaces were intensively studied by May and others using carefully chosen pairs of operads
(see the recent articles \cite{May_WhatI, May_WhatII, May_WhatIII} and the many references therein).
\item In the case of the cartesian closed presentable $\infty$-category $\Cat$ of ordinary small categories, we obtain the $\infty$-category $\RigoCat$ of $\calo$\emph{-rig categories} and the $\infty$-category $\RingoCat$ of $\calo$\emph{-ring categories}.
Coherences for `lax' semiring categories have been studied by Laplaza \cite{Laplaza2}, \cite{Laplaza1}; note that, in our case, all coherence morphisms must be invertible.
It should be possible to obtain a precise comparison of our notion with these more classical ones, 
but we bypass this via a recognition principle (\autoref{cor:awesome}) for semiring $\infty$-categories which allows us to work directly with the examples of interest to us, without having to check coherences for distributors.
\item An $\infty$-categorical version of the previous example is obtained by considering the cartesian closed presentable $\infty$-category $\Catoo$. Associated to it there is the $\infty$-category $\RigoCat_\infty$ of $\calo$\emph{-semiring $\infty$-categories} and the $\infty$-category $\RingoCat_\infty$ of $\calo$\emph{-ring $\infty$-categories}.
\end{enumerate}
\end{eg}

\begin{rmk}
For a general closed symmetric monoidal presentable $\infty$-category $\calc$ there are two potentially different symmetric monoidal structures playing a role in the notion of an $\calo$-(semi)ring object. Thus it may be useful to provide an informal description of the structure given by an $\Eoo$-semiring object in~$\calc$. It consists of an object~$R\in\calc$ together with an addition map $+\colon R\times R\to R$ and a multiplication map $\times\colon R\otimes R\to R$ such that both maps are coherently associative and commutative. Moreover, the multiplication has to distribute over the addition in a homotopy coherent fashion. In the case of an ordinary category with the Cartesian monoidal structure, our notion reduces to the usual one.
\end{rmk}

Similarly to the case of commutative monoids and commutative groups, \autoref{thm:tensorproducts} also guarantees that the $\infty$-category $\Sp(\calc)$ of spectrum objects associated to a closed symmetric monoidal presentable $\infty$-category~$\calc$ has a canonical closed symmetric monoidal structure $\Sp^\otimes(\calc)$. This allows us to make the following definition.

\begin{defn}
Let $\calc$ be a closed symmetric monoidal presentable $\infty$-category and let $\calo$ be an $\infty$-operad. The $\infty$-category $\RingSp_\calo(\calc)$ of $\calo$-\emph{ring spectrum objects in}~$\calc$ is defined as
\begin{equation*}
\RingSp_\calo(\calc) := \Alg_\calo(\Sp^\otimes(\calc)).
\end{equation*}
\end{defn}

\begin{thm}\label{thm:rigmon}
Let $\calc$ be a closed symmetric monoidal presentable $\infty$-category and let~$\calo$ be an $\infty$-operad. Then the group completion functor $\Mon(\calc) \to \Grp(\calc)$ and the 
associated spectrum functor $\Grp(\calc) \to \Sp(\calc)$ refine to functors
\begin{equation*}
\Rig_\calo(\calc) \to \Ring_{\calo}(\calc)  \qquad\text{and}\qquad \Ring_\calo(\calc) \to \RingSp_\calo(\calc), 
\end{equation*}
called the \emph{ring completion} and the \emph{associated ring spectrum functor}, respectively.
\end{thm}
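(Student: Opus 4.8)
The key observation is that the group completion functor $\Mon(\calc)\to\Grp(\calc)$ and the associated spectrum functor $\Grp(\calc)\to\Sp(\calc)$ are, by \autoref{thm:tensorproducts}, symmetric monoidal functors between closed symmetric monoidal presentable $\infty$-categories (equipped with the canonical structures $\Mono(\calc)$, $\Grpo(\calc)$, and $\Sp^\otimes(\calc)$). The plan is simply to transport the problem to the level of $\calo$-algebras: a symmetric monoidal functor $G\colon\cale^\otimes\to\calf^\otimes$ of symmetric monoidal $\infty$-categories induces a functor $\Alg_\calo(\cale^\otimes)\to\Alg_\calo(\calf^\otimes)$ for every $\infty$-operad $\calo$, by postcomposition of $\calo$-algebra maps $\calo\to\cale^\otimes$ with $G$. (This is the standard functoriality of $\Alg_\calo(-)$ in the underlying symmetric monoidal $\infty$-category, as in \cite[Section~2.1.3]{HA}.) Applying this first to $G$ the group completion functor yields
\[
\Rig_\calo(\calc)=\Alg_\calo(\Mono(\calc))\longrightarrow\Alg_\calo(\Grpo(\calc))=\Ring_\calo(\calc),
\]
and then to $G$ the associated spectrum functor yields
\[
\Ring_\calo(\calc)=\Alg_\calo(\Grpo(\calc))\longrightarrow\Alg_\calo(\Sp^\otimes(\calc))=\RingSp_\calo(\calc),
\]
which are the two functors claimed.

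The remaining point is to check that these functors genuinely refine the underlying functors on $\Mon(\calc)$ and $\Grp(\calc)$, i.e.\ that they are compatible with the forgetful functors $\Alg_\calo(-)\to(-)$ down to the underlying $\infty$-categories. This is immediate from the construction: the forgetful functor $\Alg_\calo(\cale^\otimes)\to\cale$ is natural in $\cale^\otimes$, so the square relating $\Alg_\calo(G)$ to $G$ on underlying objects commutes up to canonical equivalence. Thus the induced functor on $\calo$-algebras lifts the group completion (respectively the associated spectrum functor) along the forgetful functors, which is exactly the assertion that it is a ``ring completion'' (respectively an ``associated ring spectrum functor'').

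There is essentially no hard step here; the only thing one must be slightly careful about is that the symmetric monoidal structures on $\Mon(\calc)$, $\Grp(\calc)$, and $\Sp(\calc)$ referenced in the statement are the \emph{convolution-type} structures $\Mono(\calc)$, $\Grpo(\calc)$, $\Sp^\otimes(\calc)$ of \autoref{thm:tensorproducts} (induced from $\calc^\otimes$), and \emph{not} the cartesian structures; the functoriality of $\Alg_\calo(-)$ is applied to the former. Once this bookkeeping is in place, the theorem follows formally from \autoref{thm:tensorproducts} together with the functoriality of $\calo$-algebras in symmetric monoidal functors.
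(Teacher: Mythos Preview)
Your proposal is correct and matches the paper's own proof essentially verbatim: the paper simply observes that the group completion and associated spectrum functors are symmetric monoidal by \autoref{thm:tensorproducts}, and hence induce functors on $\calo$-algebras. Your additional remarks on compatibility with the forgetful functors and on which monoidal structures are in play are accurate elaborations of what the paper leaves implicit.
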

\begin{proof}
This is clear since the group completion $\Mon(\calc) \to \Grp(\calc)$ and also the associated spectrum functor $\Grp(\calc) \to \Sp(\calc)$ are symmetric monoidal as shown in \autoref{thm:tensorproducts}.
\end{proof}

\begin{eg}
\begin{enumerate}
\item In the special case of the $\infty$-category $\Set$ of sets this reduces to the usual ring completion of associative or commutative semirings. 
\item Given an $\infty$-operad~$\calo$, we obtain an associated ring completion functor $\Rig_\calo(\cals)\to\Ring_\calo(\cals)$ from $\calo$-rig spaces to $\calo$-ring spaces and an associated ring spectrum functor $\Ring_\calo(\cals)\to\RingSp_\calo(\cals)$ from $\calo$-ring spaces to $\calo$-ring spectra. The latter $\infty$-category will also be written $\RingSp_\calo.$
\item Let us again consider the cartesian closed presentable $\infty$-category $\Cat$ of ordinary small categories. Then for each $\infty$-operad~$\calo$, we obtain a ring completion functor $\RigoCat\to\RingoCat$ from $\calo$-rig categories to $\calo$-ring categories. 
\item Again, we immediately obtain an $\infty$-categorical refinement of the previous example. For each $\infty$-operad~$\calo$, we obtain a ring completion functor $\RigoCat_\infty\to\RingoCat_\infty$ from $\calo$-rig $\infty$-categories to $\calo$-ring $\infty$-categories. Using explicit models, a similar construction was obtained by Baas-Dundas-Richter-Rognes in~\cite{BDRR}.
\end{enumerate}
\end{eg}

\autoref{thm:rigmon} shows that semirings can be used to produce highly structured ring spectra. Unfortunately, the definition of a semiring object is a bit indirect, so in practice it is often difficult to write down explicit examples of such objects. \autoref{seminringex} provides a natural class of semirings in the case of the cartesian closed $\infty$-category $\calc = \Cat_\infty$. Moreover, this is the class that is of most interest in applications to algebraic K-theory, as we discuss in \S\ref{sec:infinite}.

We conclude this section with a base-change result (similar to \autoref{cormon}) which sheds some light on the definition of semiring and ring object. This result will also be needed in \autoref{sec:app} where we show $\bbE_n$-(semi)rings to be \emph{algebraic}. 

\begin{prop}\label{prop_rig_ten}
Let $\calc$ be a cartesian closed presentable $\infty$-category and $\calo$ an $\infty$-operad. Then we have equivalences
\begin{equation*}
\Rig_\calo(\calc) \simeq \calc \otimes \Rig_\calo(\cals) \qquad \text{and} \qquad  \Ring_\calo(\calc) \simeq \calc \otimes \Ring_\calo(\cals).
\end{equation*}
\end{prop}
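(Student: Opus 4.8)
The plan is to reduce the statement to the base-change equivalences for commutative monoids and groups already established in \autoref{cormon}, by commuting the functor $\Alg_\calo(-)$ past the formation of tensor products of presentable $\infty$-categories. As a first step I would upgrade \autoref{thm:idem} to a statement about closed symmetric monoidal structures. Since $\Mon(-)$ is a smashing localization of $\Prl$, the idempotent commutative algebra object of $\Prl$ underlying it is exactly $\Mono(\cals)$; hence, for a closed symmetric monoidal presentable $\calc$, the coprojection $\calc\to\calc\otimes\Mono(\cals)$ formed in $\Prlo\simeq\Alg_{\bbE_\infty}(\Prl)$ is a symmetric monoidal left adjoint whose underlying functor is the localization $\calc\to\Mon(\calc)$. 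By the uniqueness clause of \autoref{prop:smmonoidal}(i) (equivalently by \autoref{cor:monstab}, which records that $\Mono(-)$ and $\Grpo(-)$ are localizations of $\Prlo$), this gives equivalences
\[
\Mono(\calc)\simeq\calc\otimes\Mono(\cals)\qquad\text{and}\qquad\Grpo(\calc)\simeq\calc\otimes\Grpo(\cals)
\]
of closed symmetric monoidal presentable $\infty$-categories, where $\otimes$ denotes the coproduct in $\Prlo$.

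The core of the argument is then the claim that, for $\calc$ cartesian closed presentable and any $\cale\in\Prlo$, there is an equivalence $\Alg_\calo(\calc\otimes\cale)\simeq\calc\otimes\Alg_\calo(\cale)$, with the tensor product on the left formed in $\Prlo$ and on the right in $\Prl$. This is the analog, one level up, of the chain $\calc\otimes\Mon(\cale)\simeq\Mon(\FunR(\calc\op,\cale))\simeq\Mon(\calc\otimes\cale)$ used to prove \autoref{thm:idem} through \autoref{lem:algR}: one works with the description $\calc\otimes\cale\simeq\FunR(\calc\op,\cale)$ and checks that, because $\calc$ carries the \emph{cartesian} symmetric monoidal structure, an $\calo$-algebra structure on a limit-preserving functor $F\colon\calc\op\to\cale$ amounts to the same datum as a lift of $F$ through the limit-creating forgetful functor $\Alg_\calo(\cale)\to\cale$; thus $\Alg_\calo(\FunR(\calc\op,\cale))\simeq\FunR(\calc\op,\Alg_\calo(\cale))\simeq\calc\otimes\Alg_\calo(\cale)$. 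Granting this, the proposition follows by concatenation,
\[
\Rig_\calo(\calc)=\Alg_\calo(\Mono(\calc))\simeq\Alg_\calo(\calc\otimes\Mono(\cals))\simeq\calc\otimes\Alg_\calo(\Mono(\cals))=\calc\otimes\Rig_\calo(\cals),
\]
and the identical computation with $\Grpo(\cals)$ in place of $\Mono(\cals)$ yields $\Ring_\calo(\calc)\simeq\calc\otimes\Ring_\calo(\cals)$.

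The main obstacle is precisely this claim. The subtlety is that the symmetric monoidal structure on the tensor product $\calc\otimes\cale$ is a convolution-type structure rather than the pointwise one — the pointwise tensor product of two limit-preserving functors need not itself preserve limits — so one cannot simply invoke the formal fact that $\calo$-algebras in a functor category are diagrams of $\calo$-algebras. The hypothesis that $\calc$ be cartesian closed is exactly what rescues the argument, playing the role that the use of the cartesian structure plays in \autoref{lem:algR}; equivalently, one must verify a base-change property of the functor $\Alg_\calo\colon\Prlo\to\Prl$ along the initial morphism $\cals\to\calc$ in $\Prlo$. Making this precise — without the detour, available in the $\bbE_\infty$-case of \autoref{lem:algR}, through the combinatorics of (very) special $\Gamma$-objects — is where the real work lies.
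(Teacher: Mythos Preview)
Your overall architecture is exactly the paper's: establish $\Mono(\calc)\simeq\calc\otimes\Mono(\cals)$ symmetric monoidally, prove a base-change equivalence $\Alg_\calo(\calc\otimes\cald)\simeq\calc\otimes\Alg_\calo(\cald)$ for $\calc$ cartesian closed, and concatenate. You also correctly isolate the real content as lying in that base-change step, and correctly diagnose the obstacle: the monoidal structure on $\calc\otimes\cald\simeq\FunR(\calc\op,\cald)$ is not pointwise, so one cannot simply cite the ``algebras in a functor category are functors to algebras'' equivalence. But you then stop, acknowledging that ``making this precise\ldots\ is where the real work lies.'' That is a genuine gap: the proposal is a correct outline with the key lemma left unproved.

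The paper's device for closing this gap is short and worth knowing. Rather than argue directly with $\FunR(\calc\op,\cald)$, it first treats the special case $\calc=\calp(\calc_0)$ of a presheaf $\infty$-category. There the tensor product \emph{is} pointwise, $\cald\otimes\calp(\calc_0)\simeq\Fun(\calc_0\op,\cald)$, so the desired equivalence $\Alg_\calo(\cald)\otimes\calp(\calc_0)\simeq\Alg_\calo(\cald\otimes\calp(\calc_0))$ is the formal one. A general cartesian closed presentable $\calc$ then sits as a full \emph{symmetric monoidal} subcategory of some $\calp(\calc_0)$ (take $\calc_0=\calc^\kappa$), and one checks that under this embedding both $\Alg_\calo(\cald)\otimes\calc$ and $\Alg_\calo(\cald\otimes\calc)$ cut out the same full subcategory of $\Alg_\calo(\cald\otimes\calp(\calc_0))$---namely those $\calo$-algebras whose underlying object, colourwise, lands in $\cald\otimes\calc\subseteq\cald\otimes\calp(\calc_0)$. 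This embed-in-presheaves-then-restrict maneuver is exactly the missing idea; once you have it, your concatenation finishes the proof.
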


\begin{proof}
We show more generally that, for $\cald$ any closed symmetric monoidal presentable $\infty$-category, there exists a canonical equivalence
\begin{equation}\label{eqn:algbasechange}
\Alg_\calo(\cald)\otimes\calc\to\Alg_\calo(\cald\otimes\calc).
\end{equation}
Then, taking $\cald$ to be $\Mon(\cals)$, using Theorem \ref{thm:idem}, we obtain the desired chain of equivalences
\[
\Rig_\calo(\calc)\simeq\Alg_\calo(\Mon(\calc))\simeq\Alg_\calo(\Mon(\cals)\otimes\calc)
\simeq\Alg_\calo(\Mon(\cals))\otimes\calc\simeq\Rig_\calo(\cals)\otimes\calc\, .
\]
In the case of rings we get an analogous chain of equivalences.

To show \eqref{eqn:algbasechange}, first consider the case in which $\calc=\calp(\calc_0)$ is the $\infty$-category of presheaves of spaces on a (small) $\infty$-category $\calc_0$.
In this case, we have that $\cald\otimes\calc\simeq\Fun(\calc^{op}_0,\cald)$, so that
\[
\Alg_\calo(\cald)\otimes\calc\simeq\Fun(\calc^{op}_0,\Alg_\calo(\cald))\simeq\Alg_\calo(\Fun(\calc^{op}_0,\cald))\simeq\Alg_\calo(\cald\otimes\calc).
\]
A general cartesian closed presentable $\infty$-category $\calc$ is a full symmetric monoidal subcategory of some $\calp(\calc_0)$, say for $\calc_0$ the full subcategory of $\kappa$-compact objects in $\calc$ for a sufficiently large regular cardinal $\kappa$.
Since $\cald\otimes\calc\simeq\FunR(\calc^{op},\cald)$, we see that $\cald\otimes\calc$ is a full symmetric monoidal subcategory of $\cald\otimes\calp(\calc_0)$, and similarly with $\cald$ replaced by $\Alg_\calo(\cald)$.
Thus it suffices to show that $\Alg_\calo(\cald)\otimes\calc$ and $\Alg_\calo(\cald\otimes\calc)$ define equivalent full subcategories of $\Alg_\calo(\cald)\otimes\calp(\calc_0)\simeq\Alg_\calo(\cald\otimes\calp(\calc_0))$.

If $\calo$ is monochromatic (i.e. if there exists an essentially surjective functor $\Delta^0\to\calo^\otimes_{\langle 1\rangle}$), then an object of $\Alg_\calo(\cald\otimes\calp(\calc_0))$ lies in the full subcategory $\Alg_\calo(\cald\otimes\calc)$ if and only if the projection to $\cald\otimes\calp(\calc_0)$ factors through $\cald\otimes\calc$.
For arbitrary $\calo$, an object of $\Alg_\calo(\cald\otimes\calp(\calc_0))$ lies in the full subcategory $\Alg_\calo(\cald\otimes\calc)$ precisely when the restriction along any full monochromatic suboperad $\calo'\to\calo$ satisfies this same condition.
As the analogous results for $\Alg_\calo(\cald)\otimes\calc$ hold by the same argument, we see that $\Alg_\calo(\cald)\otimes\calc$ and $\Alg_\calo(\cald\otimes\calc)$ define equivalent full subcategories of $\Alg_\calo(\cald)\otimes\calp(\calc_0)\simeq\Alg_\calo(\cald\otimes\calp(\calc_0))$.
\end{proof}

\section{Multiplicative infinite loop space theory}\label{sec:infinite}

In this section we apply the results of the previous section to some specific $\infty$-categories; namely, we consider the $\infty$-categories $\cals$ of spaces, the $\infty$-category $\Cat$ of  ordinary categories (really a 2-category, but we regard it as an $\infty$-category), and the $\infty$-category $\Catoo$  of $\infty$-categories. Let us emphasize that, as a special case of \autoref{thm:rigmon}, the group completion and the associated spectrum functor
\[
\Mon(\cals) \to \Grp(\cals) \to \Sp
\]
refine to functors
\[
\Rig_\calo(\cals) \to \Ring_\calo(\cals) \to \RingSp_\calo.
\] 
This gives us not only a way of obtaining (highly structured) ring spectra, but it also allows us to identify certain spectra as ring spectra.

Recall that the group completion functor $\Mon(\cals)\to\Grp(\cals)\to\Sp$ plays an important role in algebraic K-theory. The input data for algebraic K-theory is often a symmetric monoidal category $\calm$; as a primary example, we have the category $\calm=\Proj_R$ of finitely generated projective modules over a ring $R$, which is symmetric monoidal under the direct sum $\oplus$, the coproduct. In any case, given such a category~$\calm$, we form the subcategory of isomorphisms $\calm^\sim$ and pass to the geometric realization~$|\calm^\sim|$.
That way we obtain an $\bbE_\infty$-space~$|\calm^\sim|$, i.e., an object of $\Mon(\cals)$. The \emph{algebraic K-theory spectrum} $\K (\calm)$ is then defined to be the spectrum associated to the group completion of $|\calm^\sim|$, see e.g.\ \cite{segal_categories}. In other words, (direct sum) algebraic K-theory is defined as the composition
\begin{equation}\label{ktheory}
\K\colon  \SymMonCat \xrightarrow{(-)^\sim} \SymMonCat \xrightarrow{|-|} \Mon(\cals) \to \Grp(\cals) \to \Sp .
\end{equation}
\noindent
It is a result of May \cite{May82}, with refinements by Elmendorf-Mandell \cite{EM06} and Bass-Dundas-Richter-Rognes \cite{BDRR}, that this functor respects multiplicative structures, in the appropriate sense. Our methods give an even more refined result.

\begin{prop}
The algebraic $\K$-theory functor $\K\colon  \SymMonCat\to\Sp$ is lax symmetric monoidal.
In particular, it induces a functor $\Rig_\calo\Cat \to \RingSp_\calo$ for any $\infty$-operad $\calo$. 
\end{prop}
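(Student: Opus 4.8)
The plan is to realize $\K$ as a composite of lax symmetric monoidal functors and then invoke functoriality of $\calo$-algebras. Recall from~\eqref{ktheory} that, under the identification $\SymMonCat\simeq\Mon(\Cat)$, the functor $\K$ factors as
\[
\Mon(\Cat)\xrightarrow{\ \underline{(-)^\sim}\ }\Mon(\Cat)\xrightarrow{\ \underline{|-|}\ }\Mon(\cals)\to\Grp(\cals)\to\Sp(\cals)\simeq\Sp,
\]
where the first two arrows are the canonical extensions of~\S\ref{sec:more} of the core functor $(-)^\sim\colon\Cat\to\Cat$ and the classifying space functor $|-|\colon\Cat\to\cals$, and the last two are the group completion and the associated spectrum functor. (Identifying the two ``input'' functors of~\eqref{ktheory} with these canonical extensions is one of the two points that needs a little care; the other is recorded at the end.)

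First I would check the hypotheses of \autoref{cor:lax} for $(-)^\sim$ and $|-|$. Both $\Cat$ and $\cals$ are cartesian closed presentable $\infty$-categories. The core functor $(-)^\sim\colon\Cat\to\Cat$ is right adjoint to the inclusion $\Grpd\hookrightarrow\Cat$, hence product-preserving, and it preserves filtered colimits — a morphism in the core of a filtered colimit of categories is an isomorphism already realized at some finite stage — so it is accessible. The functor $|-|\colon\Cat\to\cals$ is the nerve $\N\colon\Cat\to\sSet$ followed by geometric realization $\sSet\to\cals$; the nerve preserves finite products and geometric realization preserves finite products (the cartesian product of simplicial sets models the product of the spaces they present), so $|-|$ is product-preserving, and it preserves filtered colimits, hence is accessible. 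By \autoref{cor:lax} the extensions $\underline{(-)^\sim}\colon\Mon(\Cat)\to\Mon(\Cat)$ and $\underline{|-|}\colon\Mon(\Cat)\to\Mon(\cals)$ are therefore lax symmetric monoidal with respect to the canonical monoidal structures of \autoref{thm:tensorproducts}. Moreover the group completion $\Mon(\cals)\to\Grp(\cals)$ and the associated spectrum functor $\Grp(\cals)\to\Sp(\cals)$ are (strong, hence in particular lax) symmetric monoidal by \autoref{thm:tensorproducts}. Since a composite of lax symmetric monoidal functors is lax symmetric monoidal, it follows that $\K$ is lax symmetric monoidal as a functor $\Mono(\Cat)\to\Sp^\otimes(\cals)$.

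For the second assertion, a lax symmetric monoidal functor between symmetric monoidal $\infty$-categories is in particular a morphism of the underlying $\infty$-operads, and postcomposition with it induces a functor on $\calo$-algebras for every $\infty$-operad $\calo$. Applying this to $\K\colon\Mono(\Cat)\to\Sp^\otimes(\cals)$ produces
\[
\Rig_\calo\Cat=\Alg_\calo(\Mono(\Cat))\longrightarrow\Alg_\calo(\Sp^\otimes(\cals))=\RingSp_\calo,
\]
as claimed. The main obstacle here is not any deep argument but bookkeeping: one must be sure that $(-)^\sim$ and $|-|$ in~\eqref{ktheory} are genuinely the canonical extensions of~\S\ref{sec:more}, so that \autoref{cor:lax} applies with the correct (canonical) monoidal structures, and one must invoke the classical fact that geometric realization preserves finite products — this is exactly the input making $|-|$ eligible for \autoref{cor:lax}.
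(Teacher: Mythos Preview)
Your proof is correct and follows essentially the same approach as the paper: decompose $\K$ via~\eqref{ktheory}, apply \autoref{thm:tensorproducts} to the last two functors and \autoref{cor:lax} to the first two, then compose. You supply more detail than the paper does---explicitly verifying product-preservation and accessibility of $(-)^\sim$ and $|-|$, and spelling out why the ``in particular'' clause follows---but the strategy is identical.
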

\begin{proof}
The last two functors in the composition \eqref{ktheory} are symmetric monoidal by \autoref{thm:tensorproducts}.
The remaining two functors $(-)^\sim\colon\SymMonCat\to\SymMonCat$ and $|-|\colon \SymMonCat \to \Mon(\cals)$ are the canonical extensions of the product preserving functors $(-)^\sim\colon\Cat\to\Cat$ and $|-|\colon \Cat \to \cals$ respectively. Since these latter functors are accessible, \autoref{cor:lax} implies that their canonical extensions are lax symmetric monoidal, concluding the proof.
\end{proof}

We now have the tools necessary to establish corresponding results in the $\infty$-categorical case.
Note that the composition of the first two functors in \eqref{ktheory} is the same as the composition of the nerve $\SymMonCat \to \SymMonCat_\infty$ followed by the functor $(-)^\sim\colon\SymMonCat_\infty \to \Mon(\cals)$, which sends a symmetric monoidal $\infty$-category to its maximal subgroupoid, and of course is again symmetric monoidal. This allows us to recover the algebraic K-theory of a symmetric monoidal category $\calm$ by an application of the following $\infty$-categorical version of algebraic K-theory to the nerve of $\calm$.

\begin{defn}
Let $\calm$ be a symmetric monoidal $\infty$-category. The \emph{algebraic $\mathrm{K}$-theory spectrum} $\K(\calm)$ is the spectrum associated to the group completion of $\calm^\sim$. Thus, the algebraic K-theory functor is defined as the composition
\begin{equation}\label{ktheory2}
\K\colon  \SymMonCat_\infty \xrightarrow{(-)^\sim} \Mon(\cals) \longrightarrow \Grp(\cals) \longrightarrow \Sp.
\end{equation}
\end{defn}

\begin{rmk}
Strictly speaking, this is the {\em direct sum} K-\emph{theory}, since it does not take into account a potential exactness (or Waldhausen) structure on the symmetric monoidal $\infty$-categories in question.
Nevertheless, in many cases of interest, e.g.\ that of a connective ring spectrum $R$, the algebraic K-theory of $R$, defined in terms of Waldhausen's $S_\bullet$ construction applied to the stable $\infty$-category of $R$-modules (which agrees with the K-theory of any suitable model category of $R$-modules, see~\cite{BGT1} for details), is computed as the direct sum K-theory of the symmetric monoidal $\infty$-category $\mathrm{Proj}_R$ of finitely-generated projective $R$-modules (\cite[Definition 8.2.2.4]{HA}).

For more sophisticated versions of K-theory, the situation is slightly more complicated but entirely analogous. In \cite{BGT2} it is shown that the algebraic K-theory $\mathrm{K}\colon\Catoo^\mathrm{perf}\to\Sp$ of small idempotent-complete stable $\infty$-categories is a lax symmetric monoidal functor, as is the nonconnective version; the methods employed to do so are similar to the ones used in the present paper, in that $\mathrm{K}$ is shown to be the tensor unit in a symmetric monoidal $\infty$-category of all additive (respectively, localizing) functors $\Catoo^\mathrm{perf}\to\Sp$, so that the commutative algebra structure ultimately relies on the existence of an idempotent object in an appropriate symmetric monoidal $\infty$-category.
The case of general Waldhausen $\infty$-categories is treated in \cite{Bar}, where it is shown that the algebraic K-theory $\mathrm{K}\colon\mathrm{Wald}_\infty\to\Sp$ of Waldhausen $\infty$-categories is again a lax symmetric monoidal functor.
\end{rmk}

As already mentioned, the $\infty$-categorical algebraic K-theory 
\[
\K\colon\SymMonCatoo\to\Sp
\]
applied to nerves of ordinary symmetric monoidal categories recovers the 1-categorical algebraic K-theory $\K\colon\SymMonCat\to\Sp.$ Note, however, that the inclusion of symmetric monoidal 1-categories into symmetric monoidal $\infty$-categories given by the nerve functor does not commute with the tensor products. In fact, the tensor product $\mathrm{N}(\calc)\otimes\mathrm{N}(\cald)$ of the nerves of two symmetric monoidal 1-categories $\calc$, $\cald$ need not again be (the nerve of) a symmetric monoidal 1-category; rather, one can show that $\mathrm{N}(\calc\otimes\cald)$ is the 1-categorical truncation of $\mathrm{N}(\calc)\otimes\mathrm{N}(\cald)$.

\begin{thm}
The algebraic $\K$-theory functor $\K\colon \SymMonCat_\infty \to \Sp$ is lax symmetric monoidal. In particular, it refines to a functor $\Rig_\calo(\Cat_\infty) \to \RingSp_\calo$ for any $\infty$-operad $\calo$. 
\end{thm}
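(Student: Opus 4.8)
The plan is to realize $\K$ as a composite of a lax symmetric monoidal functor followed by two strong symmetric monoidal functors, exactly as in the proof of the $1$-categorical version above. Recall from~\eqref{ktheory2} that $\K$ is the composite
\[
\SymMonCat_\infty \xrightarrow{(-)^\sim} \Mon(\cals)\longrightarrow\Grp(\cals)\longrightarrow\Sp,
\]
where $\SymMonCat_\infty\simeq\Mon(\Cat_\infty)$ is equipped with the canonical symmetric monoidal structure of~\autoref{thm:tensorproducts}, and where $\Sp\simeq\Sp(\cals)$ carries its canonical (smash product) symmetric monoidal structure. The second and third arrows are the group completion and the associated spectrum functor; by~\autoref{thm:tensorproducts}, applied with $\calc=\cals$, both are symmetric monoidal. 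Since strong symmetric monoidal functors are in particular lax symmetric monoidal and composites of lax symmetric monoidal functors are again lax symmetric monoidal, it therefore suffices to prove that the first arrow $(-)^\sim\colon\SymMonCat_\infty\to\Mon(\cals)$ is lax symmetric monoidal.

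To do this I would observe that $(-)^\sim\colon\SymMonCat_\infty\to\Mon(\cals)$ is the canonical extension $\underline{F}$ of the core functor $F=(-)^\sim\colon\Cat_\infty\to\cals$ sending an $\infty$-category to its maximal subgroupoid. Both $\Cat_\infty$ and $\cals$ are cartesian closed presentable $\infty$-categories. The functor $F$ preserves finite products, being right adjoint to the inclusion $\cals\hookrightarrow\Cat_\infty$; and $F$ is accessible, since it is corepresented by the compact object $\Delta^0$ and hence commutes with filtered colimits. Therefore~\autoref{cor:lax} applies and shows that $\underline{F}=(-)^\sim$ is lax symmetric monoidal. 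Combined with the previous paragraph, this proves that $\K\colon\SymMonCat_\infty\to\Sp$ is lax symmetric monoidal.

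For the final clause, a lax symmetric monoidal functor induces a functor on $\calo$-algebra objects for every $\infty$-operad $\calo$; applying $\Alg_\calo(-)$ to $\K$ and using the identifications $\Alg_\calo(\SymMonCat_\infty)=\Alg_\calo(\Mono(\Cat_\infty))=\Rig_\calo(\Cat_\infty)$ and $\Alg_\calo(\Sp)=\RingSp_\calo$ gives the asserted refinement $\Rig_\calo(\Cat_\infty)\to\RingSp_\calo$. The only point that deserves care --- and essentially the sole obstacle, since all the substance has been absorbed into~\autoref{thm:tensorproducts} and~\autoref{cor:lax} --- is a bookkeeping one: one must keep the two symmetric monoidal structures on $\Cat_\infty$ apart (the cartesian structure used to form $\Mon(\Cat_\infty)=\SymMonCat_\infty$ versus the closed structure inducing the canonical monoidal structure on $\Mon(\Cat_\infty)$; here they coincide because $\Cat_\infty$ is cartesian closed), and one must verify that the functor $(-)^\sim$ on symmetric monoidal $\infty$-categories is indeed the canonical extension of the core functor on $\Cat_\infty$, so that~\autoref{cor:lax} is genuinely applicable. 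Both are immediate from the constructions, so the theorem will follow at once from the machinery developed above.
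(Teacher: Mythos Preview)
Your proposal is correct and follows essentially the same approach as the paper's own proof: decompose $\K$ via~\eqref{ktheory2}, use \autoref{thm:tensorproducts} to handle the last two arrows, and invoke \autoref{cor:lax} for the core functor $(-)^\sim\colon\Cat_\infty\to\cals$ to handle the first. You supply a bit more detail than the paper (the explicit reasons why $(-)^\sim$ is accessible and product-preserving, and the unpacking of the ``in particular'' clause), but the argument is the same.
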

\begin{proof}
The proof is almost the same as in the 1-categorical case. The last two functors in the defining composition \eqref{ktheory2} are symmetric monoidal by \autoref{thm:tensorproducts}.
The remaining functor $(-)^\sim\colon\SymMonCatoo\to\Mon(\cals)$ is the canonical extension of the accessible, product preserving functors $(-)^\sim\colon\Catoo\to\cals$. Thus, \autoref{cor:lax} implies that this canonical extension is lax symmetric monoidal as intended.
\end{proof}

\begin{rmk}
The $\K$-theory functor is defined as the composition \eqref{ktheory2} of lax symmetric monoidal functors. We know that the last two of these (namely, the group completion and the associated spectrum functor) are actually symmetric monoidal. Thus, one might wonder whether also the first functor (and hence the $\K$-theory functor) is symmetric monoidal as well. This is not the case, as the following counterexample shows.

Let us begin by recalling from \cite[Remark~2.1.3.10]{HA} that the $\infty$-category $\mathrm{Mon}_{\bbE_0}(\Cat_\infty)$ is equivalent to $(\Cat_\infty)_{\Delta^0/}.$ Thus, an object in $\mathrm{Mon}_{\bbE_0}(\Cat_\infty)$ is just an $\infty$-category~$\calc$ together with a chosen object $x\in\calc.$ The fact that an ordinary monoid gives rise to a category with one object (which is hence distinguished) admits the following $\infty$-categorical variant. There is a functor
\[
\mathrm{B}\colon\mathrm{Mon}_{\bbE_1}(\cals)\to\mathrm{Mon}_{\bbE_0}(\Cat_\infty)
\]
which is left adjoint to the functor which sends $x\colon\Delta^0\to\calc$ to the endomorphism monoid~$\mathrm{End}_{\calc}(x)$ of the distinguished object. Similarly, there is a functor
\[
\mathrm{B}\colon\mathrm{Mon}_{\bbE_\infty}(\cals)\to\mathrm{Mon}_{\bbE_\infty}(\Cat_\infty)
\]
which is left adjoint to the functor which sends a symmetric monoidal $\infty$-category to the $\Eoo$-monoid of endomorphisms of the monoidal unit (we are also using the fact that $\bbE_n\otimes\bbE_\infty\simeq\bbE_\infty$ for $n=0,1$). 
 
Now, let $\calf=\Fr(\Delta^0)$ denote the free symmetric monoidal $\infty$-category on the point, which is to say the nerve of the groupoid of finite sets and isomorphisms. We claim that, for any symmetric monoidal $\infty$-groupoid $\calc$,
\[
(\mathrm{B}\calf)\otimes\calc\simeq\mathrm{B}\calc.
\]
This is clearly true if $\calc=\calf$, and the general formula follows by the observation that both sides commute with colimits in the $\calc$ variable and the fact that every symmetric monoidal $\infty$-groupoid is an iterated colimit of $\calf$.
But the groupoid core $(\mathrm{B}\calf)^\sim$ is trivial.
Thus, $\K(\mathrm{B}\calf)\otimes\K(\calc)=0$ for every $\calc$.
On the other hand, taking $\calc=\mathbb{Z}$, we have that $(\mathrm{B}\calc)^\sim\simeq\mathrm{B}\calc$, so $\K(\mathrm{B}\calc)\simeq\Sigma\mathrm{H}\mathbb{Z}$, the suspension of the Eilenberg-MacLane spectrum.
\end{rmk}

We have the following `recognition principle' for semiring $\infty$-categories.

\begin{thm}\label{seminringex}
 Let~$\calc$ be an $\bbE_n$-monoidal $\infty$-category with coproducts such that the monoidal product
 \[
 \otimes\colon\calc\times\calc\to\calc
 \]
 preserves coproducts separately in each variable. 
Then~$(\calc,\sqcup,\otimes)$ is canonically an object 
of $\Rig_{\bbE_n}(\Catoo)$. 
\end{thm}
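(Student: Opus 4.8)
The plan is to equip the object $(\calc,\sqcup)\in\Mon(\Catoo)$ with an $\bbE_n$-algebra structure for the canonical (convolution) symmetric monoidal structure $\Mono(\Catoo)$ of \autoref{thm:tensorproducts}, so that by definition $(\calc,\sqcup,\otimes)$ becomes an object of $\Alg_{\bbE_n}(\Mono(\Catoo))=\Rig_{\bbE_n}(\Catoo)$. First I would observe that, $\calc$ having finite coproducts, the cocartesian symmetric monoidal structure exhibits $\calc$ as a commutative monoid object of $\Catoo$, i.e.\ an object $(\calc,\sqcup)\in\Mon(\Catoo)$ with monoidal unit the initial object of $\calc$; recall that the morphisms in $\Mon(\Catoo)$ between cocartesian monoidal $\infty$-categories are precisely the finite-coproduct-preserving functors.

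The key step is to unwind the universal property of the canonical tensor product on $\Mon(\Catoo)$, using its description as the convolution product on $\Gamma$-objects (the remark following \autoref{cor:monstab}, and \cite[Proposition~6.3.1.10, Corollary~6.3.1.12]{HA}), the fact that it is closed (hence preserves colimits separately in each variable), and the fact that the free functor $\Fr\colon\Catoo\to\Mon(\Catoo)$ is symmetric monoidal. This yields a multilinear mapping property: for $M_1,\dots,M_k,N\in\Mon(\Catoo)$, a morphism $M_1\otimes\cdots\otimes M_k\to N$ in $\Mon(\Catoo)$ is the same datum as a functor of underlying $\infty$-categories $M_1\times\cdots\times M_k\to N$ that is a morphism of commutative monoid objects in each variable separately, with the operadic composition of $\bbE_n^\otimes$ corresponding to composition of such multilinear functors. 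Specializing to $M_i=N=(\calc,\sqcup)$ — and using that a strong monoidal endofunctor of a cocartesian $\infty$-category is exactly a finite-coproduct-preserving one — this says that promoting $(\calc,\sqcup)$ to an $\bbE_n$-algebra in $\Mono(\Catoo)$ amounts precisely to a coherent $\bbE_n$-indexed system of functors $\calc^{\times k}\to\calc$ $(k\geq 0)$, each of which preserves finite coproducts in every variable.

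Such a system is exactly what the hypotheses furnish. The $\bbE_n$-monoidal structure on $\calc$ already provides the coherent system of $k$-ary multiplication functors, with the monoidal unit as the $k=0$ datum. The assumption that the binary product $\otimes$ preserves finite coproducts separately in each variable upgrades this: fixing all but one argument, each $k$-ary operation becomes a composite of functors of the form $a\otimes(-)$ and $(-)\otimes a$, and a composite of finite-coproduct-preserving functors is finite-coproduct-preserving, so every $k$-ary operation is coproduct-preserving in every variable. Transporting the $\bbE_n$-coherences along the universal property then produces the desired structure, hence $(\calc,\sqcup,\otimes)\in\Rig_{\bbE_n}(\Catoo)$; as every choice along the way is governed by a universal property, the resulting object is canonical and the construction is functorial in $\calc$.

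The main obstacle is establishing the convolution/multilinear universal property at the level of $\infty$-operads rather than merely on mapping spaces, so that the $\bbE_n$-coherences genuinely transport. One cannot shortcut this by simply restricting the tensor product to the full subcategory of $\infty$-categories with finite coproducts: that subcategory is not closed under the canonical tensor product of $\Mon(\Catoo)$ — already the monoidal unit is the free symmetric monoidal $\infty$-category on a point, which is not cocartesian — so the argument must run through the Day convolution model for the symmetric monoidal $\infty$-category $\Mono(\Catoo)$ together with the description of $\bbE_n$-algebras as morphisms of $\infty$-operads out of $\bbE_n^\otimes$, ultimately reducing to the universal property of special $\Gamma$-objects.
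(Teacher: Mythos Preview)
Your approach is plausible in outline but leaves exactly the hard part unfinished: you correctly identify that one needs the multilinear universal property of the tensor on $\Mon(\Catoo)$ \emph{at the level of $\infty$-operads}, and you do not actually establish this. Without it, the passage from ``each $k$-ary operation preserves coproducts in each variable'' to a genuine $\bbE_n$-algebra structure in $\Mono(\Catoo)$ is only heuristic.

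The paper sidesteps this difficulty entirely, and the key observation is one you explicitly dismiss. You write that one ``cannot shortcut this by simply restricting the tensor product to the full subcategory of $\infty$-categories with finite coproducts'' because that subcategory is not closed under the tensor of $\Mon(\Catoo)$. But closure is not needed: it suffices that the inclusion $\Cat_\infty^\Sigma\hookrightarrow\Mon(\Catoo)$ be \emph{lax} symmetric monoidal, since lax symmetric monoidal functors carry $\bbE_n$-algebras to $\bbE_n$-algebras. The paper obtains this lax structure by a short abstract argument: $\Cat_\infty^\Sigma$ (with its own tensor product from \cite[Proposition~6.3.1.10]{HA}) is \emph{preadditive}---the product $\calc\times\cald$ already satisfies the universal property of the coproduct in $\Cat_\infty^\Sigma$, since any pair of coproduct-preserving functors into $\cale$ extends via $\calc\times\cald\to\cale\times\cale\xrightarrow{\sqcup}\cale$. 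Hence by \autoref{universalmonoidal} the symmetric monoidal left adjoint $\Catoo\to\Cat_\infty^\Sigma$ factors through a symmetric monoidal left adjoint $\Mon(\Catoo)\to\Cat_\infty^\Sigma$, whose right adjoint is precisely the cocartesian-structure inclusion and is therefore lax symmetric monoidal. Since an $\bbE_n$-algebra in $\Cat_\infty^\Sigma$ is exactly an $\bbE_n$-monoidal $\infty$-category whose tensor preserves finite coproducts in each variable, applying this lax symmetric monoidal functor yields the desired object of $\Rig_{\bbE_n}(\Catoo)$ with no operadic bookkeeping required.

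In short: your direct attack via Day convolution could in principle be made to work, but the paper's route---recognizing $\Cat_\infty^\Sigma$ as preadditive and invoking the universal property of $\Mon$---is both shorter and avoids precisely the coherence problem you flag as the main obstacle.
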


\begin{proof}
Let $\Cat_\infty^\Sigma$ be the $\infty$-category of $\infty$-categories which admit finite coproducts and coproduct preserving functor. There is a fully-faithful functor 
\begin{equation*}
  \Cat_\infty^{\Sigma} \to \SymMonCat_\infty 
\end{equation*}
given by considering an $\infty$-category with coproducts as a cocartesian symmetric monoidal $\infty$-category (see \cite[Variant~2.4.3.12]{HA}). We want to show that this functor naturally extends to a lax symmetric monoidal functor, essentially by the construction of the tensor product on $\Cat_\infty^\Sigma$ of \cite[Proposition 6.3.1.10]{HA} . From this the claim follows, since an $\bbE_n$-algebra in $\Cat_\infty^\Sigma$ is the same as an 
$\bbE_n$-monoidal $\infty$-category such that the tensor product preserves finite coproducts in each variable separately.

The first thing we want to observe is that the $\infty$-category $\Cat_\infty^{\Sigma}$ is preadditive.
To see this, note that $\Cat_\infty^{\Sigma}$ has finite coproducts and products, because $\Cat_\infty^{\Sigma}$ is presentable (this follows from \cite[Lemma 6.3.4.2]{HA} by taking $\mathcal{K}$ to be the collection of finite sets).
It remains to check that the product $\calc\times\cald$, which is calculated as the product in $\Ho(\Cat_\infty)$, satisfies the universal property of the coproduct in $\Ho(\Cat_\infty^\Sigma)$.
Given a third $\infty$-category with finite coproducts $\cale$, we note that any pair of coproduct preserving functors $f\colon\calc\to\cale$ and $g\colon\cald\to\cale$ extends to the coproduct preserving functor
\[
\calc\times\cald\overset{f\times g}{\longrightarrow}\cale\times\cale\overset{\sqcup}{\longrightarrow}\cale.
\] 
Moreover, this extension is unique up to homotopy, because $(c,d)\cong(c,\emptyset)\sqcup(\emptyset,d)$ for any $(c,d)\in\calc\times\cald$.

Using \cite[Proposition 6.3.1.10]{HA} again, the inclusion functor $i\colon \Cat_\infty^\Sigma \to \Cat_\infty$ admits a left adjoint $L$ which is symmetric monoidal. By \autoref{universalmonoidal} the functor $L$ extends to a left adjoint functor
\begin{equation*}
L'\colon \SymMonCat_\infty \simeq \Mon(\Cat_\infty) \to \Mon(\Cat_\infty^\Sigma)\simeq\Cat_\infty^\Sigma .
\end{equation*}
The right adjoint of this functor can be described as the functor 
\begin{equation*}
\Mon(i)\colon \Cat_\infty^\Sigma \simeq \Mon(\Cat_\infty^\Sigma) \to \Mon(\Cat_\infty).
\end{equation*}

We can now conclude that $\Mon(i)$ is lax symmetric monoidal since it is right adjoint to a symmetric monoidal functor. It remains to show that $\Mon(i)$ is the desired functor. 
This is obvious. 
\end{proof}

\begin{rmk}
Preserving coproducts is a condition!
\end{rmk}

\begin{cor}\label{cor:awesome}
If $\calc$ is an ordinary monoidal category with coproducts such that $\otimes\colon\calc\times\calc\to\calc$ preserves coproducts in each variable separately. 
Then~$(\calc,\sqcup,\otimes)$ is canonically an object of $\Rig_{\bbE_1}(\Cat) \subset\Rig_{\bbE_1}(\Catoo)$. If $\calc$  is moreover braided or symmetric monoidal then $(\calc,\sqcup,\otimes)$  is an object of $\Rig_{\bbE_2}(\Cat)$ or $\Rig_{\bbE_\infty}(\Cat)$ respectively.
\end{cor}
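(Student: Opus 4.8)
The plan is to run the proof of \autoref{seminringex} one categorical level down. I would begin by recalling the standard dictionary (see \cite[\S5.1]{HA}): an $\bbE_1$-monoidal structure on an ordinary category is a monoidal structure, an $\bbE_2$-monoidal structure is a braided monoidal structure, and an $\bbE_n$-monoidal structure for $n\geq 3$ — in particular for $n=\infty$ — is a symmetric monoidal structure. Hence a monoidal (resp.\ braided monoidal, resp.\ symmetric monoidal) ordinary category $\calc$ with coproducts over which $\otimes$ distributes separately in each variable is precisely an $\bbE_1$- (resp.\ $\bbE_2$-, resp.\ $\bbE_\infty$-) algebra in the $\infty$-category $\Cat^\Sigma$ of ordinary categories with finite coproducts and coproduct-preserving functors, equipped with the symmetric monoidal structure of \cite[Proposition~6.3.1.10]{HA}. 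So it is enough to produce, from such an $\bbE_n$-algebra in $\Cat^\Sigma$, a canonical object of $\Rig_{\bbE_n}(\Cat)=\Alg_{\bbE_n}(\Mono(\Cat))$.

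For this I would check that every step of the proof of \autoref{seminringex} survives the replacement of $\Catoo$ by $\Cat$. The ingredients all have $1$-categorical incarnations: $\Cat^\Sigma$ is presentable and, by the identical argument ($\calc\times\cald$ computes the coproduct since $(c,d)\cong(c,\emptyset)\sqcup(\emptyset,d)$), preadditive, so $\Cat^\Sigma\simeq\Mon(\Cat^\Sigma)$; the inclusion $i\colon\Cat^\Sigma\hookrightarrow\Cat$ has a symmetric monoidal left adjoint $L$, the free finite-coproduct completion, which preserves $1$-categories, again by \cite[Proposition~6.3.1.10]{HA}; by \autoref{universalmonoidal} this extends to a symmetric monoidal left adjoint $L'\colon\Mono(\Cat)\to\Cat^\Sigma$ whose right adjoint is $\Mon(i)$; and $\Mon(i)\colon\Cat^\Sigma\to\Mono(\Cat)$ is therefore lax symmetric monoidal \cite[Corollary~8.3.2.7]{HA}, hence induces a functor $\Alg_{\bbE_n}(\Cat^\Sigma)\to\Alg_{\bbE_n}(\Mono(\Cat))=\Rig_{\bbE_n}(\Cat)$. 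Feeding our $\calc$ into this functor yields the asserted object $(\calc,\sqcup,\otimes)\in\Rig_{\bbE_n}(\Cat)$ — with addition the coproduct and multiplication the given $\otimes$, since $\Mon(i)$ leaves the underlying ordinary category unchanged — and the inclusion $\Rig_{\bbE_n}(\Cat)\subset\Rig_{\bbE_n}(\Catoo)$ is the functor on $\bbE_n$-algebras induced by the lax symmetric monoidal functor $\Mono(\Cat)\to\Mono(\Catoo)$ of \autoref{cor:lax} attached to the product-preserving, accessible inclusion $\Cat\hookrightarrow\Catoo$.

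The only place requiring genuine care is this $1$-categorical transport: one must confirm that $\Cat^\Sigma$ is presentable, that the convolution and free-cocompletion results of \cite[\S6.3.1]{HA} restrict to $1$-categories (they do — the free finite-coproduct completion of a $1$-category is a $1$-category and Day convolution preserves $1$-categories), and that \autoref{universalmonoidal} applies with $\cald=\Cat^\Sigma$. One might instead be tempted simply to apply \autoref{seminringex} to the nerve $\N\calc$ and then argue that the resulting semiring $\infty$-category is $1$-categorical; but $\Mono(\Cat)\to\Mono(\Catoo)$ is only \emph{lax} symmetric monoidal — indeed $\N(\calc)\otimes\N(\cald)$ need not be a $1$-category — so the image of $\Rig_{\bbE_n}(\Cat)$ inside $\Rig_{\bbE_n}(\Catoo)$ is not transparently describable, which is why I would take the direct route above.
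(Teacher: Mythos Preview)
Your proposal is correct and takes essentially the same approach as the paper: invoke the standard dictionary identifying $\bbE_n$-monoidal structures on ordinary categories with monoidal, braided monoidal, and symmetric monoidal structures (for $n=1,2,\infty$), and then appeal to the machinery behind \autoref{seminringex}. The paper's actual proof is a single sentence---``We only need the identification of the $\bbE_n$-monoids in $\Cat$ with the respective monoidal categories''---which, read charitably, compresses exactly the argument you have spelled out: rerun the proof of \autoref{seminringex} with $\Cat$ in place of $\Catoo$, so that the lax symmetric monoidal functor $\Cat^\Sigma\to\Mon(\Cat)$ lands directly in $\Rig_{\bbE_n}(\Cat)$.

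Your final paragraph is more careful than the paper. You correctly flag that the naive route---apply \autoref{seminringex} to $\mathrm{N}(\calc)$ and then claim the resulting object lies in $\Rig_{\bbE_n}(\Cat)\subset\Rig_{\bbE_n}(\Catoo)$---is problematic, since $\Mono(\Cat)\to\Mono(\Catoo)$ is only lax symmetric monoidal and the image of $\Rig_{\bbE_n}(\Cat)$ is not transparently characterized. The paper does not address this point; your explicit $1$-categorical transport (checking that $\Cat^\Sigma$ is presentable and preadditive, that the free finite-coproduct completion stays $1$-categorical, and that \autoref{universalmonoidal} applies) is the honest way to land in $\Rig_{\bbE_n}(\Cat)$ rather than merely in $\Rig_{\bbE_n}(\Catoo)$.
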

\begin{proof}
We only need the identification of the $\bbE_n$-monoids in $\Cat$ with the respective monoidal categories. This has been given in \cite[Example 5.1.2.4]{HA}.
\end{proof}

\begin{cor}
Let~$\calc$ be an $\bbE_n$-monoidal $\infty$-category with coproducts such that~$\otimes\colon\calc\times\calc\to\calc$ preserves coproducts in each variable separately. 
Then the largest Kan complex $\calc^\sim$ inside of $\calc$ together with $\sqcup$ and $\otimes$ is an object of $\Rig_{\bbE_n}(\cals)\subseteq\Rig_{\bbE_n}(\Catoo).$
\end{cor}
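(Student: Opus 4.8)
The plan is to obtain $(\calc^\sim,\sqcup,\otimes)$ by transporting the $\bbE_n$-semiring $(\calc,\sqcup,\otimes)$ of \autoref{seminringex} along a lax symmetric monoidal functor $\Rig_{\bbE_n}(\Catoo)\to\Rig_{\bbE_n}(\cals)$ induced by the core functor $(-)^\sim\colon\Catoo\to\cals$. First I would record that $(\calc,\sqcup,\otimes)$ is canonically an object of $\Rig_{\bbE_n}(\Catoo)=\Alg_{\bbE_n}(\Mono(\Catoo))$ by \autoref{seminringex}. Next, the core functor $(-)^\sim\colon\Catoo\to\cals$ is the right adjoint of the fully faithful inclusion $\iota\colon\cals\hookrightarrow\Catoo$, hence it preserves all limits — in particular it is product-preserving — and it is accessible. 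Since $\Catoo$ and $\cals$ are cartesian closed presentable $\infty$-categories, \autoref{cor:lax} then applies and shows that the canonical extension $\underline{(-)^\sim}\colon\Mon(\Catoo)\to\Mon(\cals)$ is lax symmetric monoidal for the canonical closed symmetric monoidal structures $\Mono(\Catoo)$ and $\Mono(\cals)$ of \autoref{thm:tensorproducts}. A lax symmetric monoidal functor induces a functor on $\calo$-algebras for every $\infty$-operad $\calo$; specializing to $\calo=\bbE_n$ gives a functor
\[
\Rig_{\bbE_n}(\Catoo)=\Alg_{\bbE_n}\bigl(\Mono(\Catoo)\bigr)\longrightarrow\Alg_{\bbE_n}\bigl(\Mono(\cals)\bigr)=\Rig_{\bbE_n}(\cals).
\]

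Applying this functor to $(\calc,\sqcup,\otimes)$ produces an object of $\Rig_{\bbE_n}(\cals)$. To see that it is the claimed one, I would unwind the construction of $\underline{(-)^\sim}$ as post-composition of special $\Gamma$-objects with $(-)^\sim$: this functor lies over $(-)^\sim\colon\Catoo\to\cals$ compatibly with the forgetful functors, so the underlying object of the image is $\calc^\sim$, its additive $\Eoo$-structure is the one induced by $\sqcup$, and its multiplicative $\bbE_n$-structure is the one induced by $\otimes$ (note $\otimes$ does restrict to $\calc^\sim\times\calc^\sim\to\calc^\sim$, since any functor carries equivalences to equivalences). Finally, for the asserted inclusion $\Rig_{\bbE_n}(\cals)\subseteq\Rig_{\bbE_n}(\Catoo)$ one uses that $\iota\colon\cals\hookrightarrow\Catoo$ is a fully faithful, colimit-preserving, symmetric monoidal functor; by \autoref{cor:monstab} the induced functor $\Mon(\cals)\to\Mon(\Catoo)$ carries a symmetric monoidal structure for the canonical structures and it remains fully faithful, so passing to $\bbE_n$-algebras yields a fully faithful embedding $\Rig_{\bbE_n}(\cals)\hookrightarrow\Rig_{\bbE_n}(\Catoo)$, in accordance with the notation already used in \autoref{cor:awesome}.

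The only point that is not purely formal, and the one I expect to need the most care, is the identification in the previous paragraph of the underlying object and the two operations as exactly $\calc^\sim$, $\sqcup$ and $\otimes$ — rather than merely knowing abstractly that the transported object lies in $\Rig_{\bbE_n}(\cals)$. As in the proof of \autoref{seminringex}, this becomes clear once the canonical extension $\underline{(-)^\sim}$ is written out concretely (post-composition with $(-)^\sim$), which is manifestly compatible with the forgetful functors to $\Catoo$ and $\cals$; everything else reduces to the cited results.
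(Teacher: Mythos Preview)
Your proposal is correct and follows essentially the same approach as the paper: apply \autoref{cor:lax} to the accessible, product-preserving core functor to obtain a lax symmetric monoidal functor on $\Mon$, pass to $\bbE_n$-algebras, and feed in the semiring $\infty$-category produced by \autoref{seminringex}. The only cosmetic difference is that the paper treats $(-)^\sim$ as a functor $\Catoo\to\cals\subset\Catoo$ and hence lands in $\Rig_{\bbE_n}(\Catoo)$ directly, whereas you target $\cals$ and then separately justify the inclusion $\Rig_{\bbE_n}(\cals)\subseteq\Rig_{\bbE_n}(\Catoo)$; your extra discussion of the underlying data and of this inclusion is more detailed than the paper's, but the argument is the same.
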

\begin{proof}
The functor $(-)^\sim\colon\Catoo \to \cals\subset\Catoo$ preserves products and is accessible. Thus we can apply \autoref{cor:lax} to deduce that the induced functor 
$\Mon(\Catoo) \to \Mon(\Catoo)$ is lax symmetric monoidal. But this implies that we obtain a further functor $\Rig_{\bbE_n}(\Catoo) \to \Rig_{\bbE_n}(\Catoo)$ which preserves the underlying object of $\Catoo$. 
Now apply this functor to the semiring $\infty$-category of \autoref{seminringex}.
\end{proof}

\begin{eg}
\begin{enumerate}
\item For an ordinary commutative ring $R$, let $\Mod_R$ denote the (ordinary) category of $R$-modules. Then $\Mod_R$ and the $\infty$-groupoid $\Mod_R^\sim$, equipped with the operations $\oplus$ and $\otimes_R$, form $\Eoo$-semiring categories. The same applies to the category of sheaves on schemes and other similar variants.

\item For an $\bbE_n$-ring spectrum $R$, the $\infty$-category $\Mod_R$ of (left) $R$-modules is a $\bbE_{n-1}$-monoidal $\infty$-category by \cite[Section 6.3 or Proposition 8.1.2.6]{HA}. Since the tensor product preserves coproducts in each variable we conclude that
 $\Mod_R$, together with the coproduct $\oplus$ and tensor product $\otimes_R$, is an $\bbE_{n-1}$-semiring $\infty$-category.
\end{enumerate}
\end{eg}

Now we want to apply this to identify certain spectra as $\Eoo$-ring spectra. For a connective $\mathbb{E}_{n+1}$-ring spectrum $R$ the $\infty$-category $\Proj_R$ of finitely generated projective $R$-modules is an $\bbE_n$-semiring. The K-theory spectrum $\K(R)$ can then be defined as $\K(\Proj_R)$.
This definition is actually equivalent to the definition using Waldhausen categories: for the variant which uses finitely generated free $R$-modules in place of projective, this is shown in \cite[Chapter VI.7]{EKMM}, and for the general case this follows from \cite[Section 4]{BGT}.

\begin{cor}
For a connective $\bbE_{n+1}$-ring spectrum $R$ the algebraic $\K$-theory spectrum $\K(R)$ is an $\bbE_{n}$-ring spectrum.
\end{cor}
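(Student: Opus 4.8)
The plan is to obtain this as an immediate application of the recognition principle \autoref{seminringex} together with the lax symmetric monoidality of $\infty$-categorical algebraic $\K$-theory established above. Recall first that, by definition, $\K(R)=\K(\Proj_R)$, where $\Proj_R$ is the $\infty$-category of finitely generated projective left $R$-modules, regarded as a symmetric monoidal $\infty$-category under direct sum; by the cited results of \cite{EKMM} and \cite{BGT} this agrees with the Waldhausen-style definition of $\K(R)$, and it is precisely here that connectivity of $R$ enters.

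The first step is to upgrade $\Proj_R$ to an object of $\Rig_{\bbE_n}(\Catoo)$. Since $R$ is a connective $\bbE_{n+1}$-ring spectrum, the $\infty$-category $\Mod_R$ of left $R$-modules carries an $\bbE_n$-monoidal structure given by $\otimes_R$ (\cite[Section~6.3, Proposition~8.1.2.6]{HA}). I would then observe that the full subcategory $\Proj_R\subseteq\Mod_R$ contains the unit $R$ and is closed under $\otimes_R$: a retract of $R^{\oplus n}$ tensored with a retract of $R^{\oplus m}$ is a retract of $R^{\oplus n}\otimes_R R^{\oplus m}\simeq R^{\oplus nm}$, hence again finitely generated projective. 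Consequently $\Proj_R$ inherits an $\bbE_n$-monoidal structure. Moreover $\Proj_R$ admits finite coproducts, namely finite direct sums, and $\otimes_R$ preserves these separately in each variable since it is exact. Thus the hypotheses of \autoref{seminringex} (with the operad $\bbE_n$) are met, and it exhibits $(\Proj_R,\sqcup,\otimes_R)$ as a canonical object of $\Rig_{\bbE_n}(\Catoo)$.

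The second step is to apply $\K$. The theorem proved above shows that $\K\colon\SymMonCatoo\to\Sp$ is lax symmetric monoidal, hence refines to a functor $\Rig_{\bbE_n}(\Catoo)\to\RingSp_{\bbE_n}$. Evaluating this functor on the object $\Proj_R$ of $\Rig_{\bbE_n}(\Catoo)$ produced above yields that $\K(R)=\K(\Proj_R)$ is an object of $\RingSp_{\bbE_n}$, that is, an $\bbE_n$-ring spectrum, which is the assertion.

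The only nontrivial bookkeeping is in the first step: checking that $\otimes_R$ genuinely restricts to the full subcategory $\Proj_R$ and that $\Proj_R$ thereby becomes an $\bbE_n$-monoidal $\infty$-category (and not merely one closed under the underlying binary product), and matching $\K(\Proj_R)$ with the usual algebraic $\K$-theory of $R$ in the connective range. There is no genuine conceptual obstacle here: all of the substantive work has already been carried out in \autoref{seminringex} and in the proof that $\K$ is lax symmetric monoidal.
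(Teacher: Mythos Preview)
Your proposal is correct and follows exactly the approach the paper intends: the paper does not write out a separate proof for this corollary, but the paragraph immediately preceding it sets up precisely your argument---$\Proj_R$ is an $\bbE_n$-semiring $\infty$-category (via \autoref{seminringex} and the example of $\Mod_R$), $\K(R)$ is defined as $\K(\Proj_R)$ and shown to agree with the Waldhausen-style definition via \cite{EKMM} and \cite{BGT}, and then one applies the lax symmetric monoidality of $\K$. Your writeup simply makes explicit the details (closure of $\Proj_R$ under $\otimes_R$, existence of finite coproducts, distributivity) that the paper leaves implicit.
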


We also have the following proposition, which states roughly that group completion of monoidal $\infty$-categories not only inverts objects, but arrows as well. It also shows why it is necessary to discard all non-invertible morphisms \emph{before} group completion.

\begin{prop}
The underlying $\infty$-category of an $\bbE_\infty$-group object of $\Cat_\infty$ is an $\infty$-groupoid. More precisely, the group completion functor $\Mon(\Cat_\infty)\to\Grp(\Cat_\infty)$
factors through the groupoid completion
\[
\Mon(\Cat_\infty)\to\Mon(\cals)\to\Grp(\cals)\to\Grp(\Cat_\infty)
\]
and induces an equivalence $\Grp(\cals)\simeq\Grp(\Cat_\infty)$.
\end{prop}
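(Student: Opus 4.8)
The plan is to present $\cals$ as a reflective subcategory of $\Cat_\infty$ whose reflector survives the passage to group objects, and then to check by hand that every $\bbE_\infty$-group in $\Cat_\infty$ is automatically groupoid-valued.

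First I would recall that the inclusion $\iota\colon\cals\hookrightarrow\Cat_\infty$ sits in a string of adjunctions $b\dashv\iota\dashv(-)^\sim$, where $b=|{-}|$ is the $\infty$-groupoid completion (every functor into an $\infty$-groupoid inverts all morphisms, so $\iota$ has this left adjoint) and $(-)^\sim$ is the core. Here $\iota$ is fully faithful and $b$ preserves finite products, since $|\calc\times\cald|\simeq|\calc|\times|\cald|$ and $|{*}|\simeq{*}$. Because $\cals$ and $\Cat_\infty$ are cartesian closed, finite products commute with all colimits, so \autoref{leftright} applies and produces an adjunction $\underline b\colon\Grp(\Cat_\infty)\adj\Grp(\cals)\colon\underline\iota$ (and likewise on $\Mon$) in which $\underline b=\Grp(b)$ and $\underline\iota$ is again fully faithful. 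Thus $\underline\iota$ realizes $\Grp(\cals)$ as a reflective subcategory of $\Grp(\Cat_\infty)$, and the proposition reduces to showing that $\underline\iota$ is essentially surjective; since the forgetful functor $\Grp(\Cat_\infty)\to\Cat_\infty$ is conservative and product preserving, this amounts to the claim that the underlying $\infty$-category of every $G\in\Grp(\Cat_\infty)$ is an $\infty$-groupoid.

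This claim is the heart of the matter, and I would prove it directly. Let $\calc$ be the underlying $\infty$-category of $G$, with multiplication $m=\otimes\colon\calc\times\calc\to\calc$ and unit $\unit$; by \autoref{prop:gpl} the shear map $s=(pr_1,m)\colon\calc\times\calc\to\calc\times\calc$ is an equivalence. As $s$ commutes with $pr_1$ it is an equivalence over $\calc$, hence a fibrewise equivalence, so each left-translation functor $L_a=a\otimes(-)\colon\calc\to\calc$ is an equivalence (in particular essentially surjective and fully faithful); and full faithfulness of $s$ unwinds, fibering hom-spaces over the first coordinate, to the statement that for every morphism $\alpha\colon a\to a'$ and all $b,b'$ the map $\Map_\calc(b,b')\to\Map_\calc(a\otimes b,\,a'\otimes b')$, $\beta\mapsto\alpha\otimes\beta$, is an equivalence. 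Given an arbitrary $f\colon x\to y$ in $\calc$, I would take $\alpha=f$ and use the factorization $f\otimes\beta=(f\otimes\id_{b'})\circ(\id_x\otimes\beta)=(f\otimes\id_{b'})\circ L_x(\beta)$: since $L_x$ is fully faithful, two-out-of-three forces post-composition with $f\otimes\id_{b'}$ to be an equivalence on $\Map_\calc(x\otimes b,-)$ for every $b$, hence (as $L_x$ is essentially surjective) on $\Map_\calc(z,-)$ for every $z$, so by the Yoneda lemma $f\otimes\id_{b'}$ is an equivalence; taking $b'=\unit$ and applying the right unitor shows $f$ is an equivalence. Hence $\calc$ is an $\infty$-groupoid, each level $G(\n)\simeq G(\langle1\rangle)^n$ is a groupoid, $G$ lies in the essential image of $\underline\iota$, and together with full faithfulness this gives $\Grp(\cals)\simeq\Grp(\Cat_\infty)$.

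Finally, to obtain the displayed factorization I would observe that the composite $\Mon(\Cat_\infty)\xrightarrow{\underline b}\Mon(\cals)\to\Grp(\cals)\xrightarrow{\underline\iota}\Grp(\Cat_\infty)$ is a composite of left adjoints, hence a left adjoint, and compute its right adjoint using \autoref{leftright} as $\underline\iota\circ\bigl(\Grp(\cals)\hookrightarrow\Mon(\cals)\bigr)\circ\underline{(-)^\sim}$; by the previous paragraph every object of $\Grp(\Cat_\infty)$ is already groupoid-valued, so this right adjoint is just the forgetful functor $\Grp(\Cat_\infty)\hookrightarrow\Mon(\Cat_\infty)$, whence the composite is the group completion by uniqueness of adjoints. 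The step I expect to be the genuine obstacle is the direct argument of the third paragraph that a group object of $\Cat_\infty$ has groupoid underlying category; the remainder is formal bookkeeping with the (co)reflections $b\dashv\iota\dashv(-)^\sim$, \autoref{leftright}, and \autoref{prop:gpl}.
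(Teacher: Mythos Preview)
Your proof is correct. The core step---showing that the underlying $\infty$-category of an $\bbE_\infty$-group object of $\Cat_\infty$ is an $\infty$-groupoid---is handled differently from the paper. The paper first reduces to the homotopy category $\Ho(\calc)$, obtaining a group object in ordinary $\Cat$, and then argues there: an Eckmann--Hilton argument shows that $\hom(\bbone,\bbone)$ is a group, translation by the inversion functor $I$ propagates this to all endomorphism monoids, and finally tensoring a morphism $f\colon x\to y$ with $I(y)$ and applying $I$ produces a morphism $y\to x$. You instead stay at the $\infty$-level and work directly with the shear map: from its full faithfulness you extract that $\beta\mapsto\alpha\otimes\beta$ is an equivalence on mapping spaces, factor through $L_x$, and conclude by Yoneda that $f\otimes\id_{b'}$ (hence $f$) is invertible. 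Your route avoids the passage to $\Ho$ and never invokes the inversion functor explicitly, at the price of a slightly more delicate fibrewise unwinding of what full faithfulness of $s$ says; the paper's route buys the comfort of a purely $1$-categorical endgame. You also spell out the factorization and the equivalence $\Grp(\cals)\simeq\Grp(\Cat_\infty)$ more explicitly via \autoref{leftright} and uniqueness of adjoints, whereas the paper leaves these as implicit consequences of the groupoid claim.
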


\begin{proof}
Let $\calc$ be an  $\bbE_\infty$-group object of $\Cat_\infty$. Then the underlying $\infty$-category of $\calc$ is an $\infty$-groupoid precisely if its homotopy category $\Ho(\calc)$ is an ordinary groupoid. Thus 
it suffices to show that $\Ho(\calc)$ is a groupoid.
But since $\Ho(\calc)$ is a group object in $\Cat$, this reduces the proof of the 
proposition to ordinary categories $\calc$.

A group object $\calc$ in categories is a symmetric monoidal category $(\calc,\otimes)$ together with an `inversion' functor $I\colon \calc \to \calc$
as in to \autoref{prop:gpl}. We clearly have $I^2 \simeq \id$. 
As a first step we show that all endomorphisms of the tensor unit $\bbone$ in $\calc$ are automorphisms. This follows from the Eckman-Hilton argument since $\hom_\calc(\bbone,\bbone)$ carries two commuting monoid structures (composition and tensoring), and as one of these is a group structure the other must also be as well. It follows that all endomorphisms in $\calc$ are automorphisms by the identification
$$I(x) \otimes -\colon \hom_\calc(x,x) \cong \hom_\calc(\bbone,\bbone).$$
Finally, to show that $\calc$ is a groupoid, it now suffices to show that for every morphisms $f\colon x \to y$ in $\calc$ there is a morphism $g: y \to x$ in $\calc$. By tensoring with $I(y)$ we see that we may assume that $y = \bbone$. Then we have $I(f)\colon I(x) \to \bbone$, and therefore, using the usual identifications, $g:=I(f) \otimes x\colon \bbone \to x$.
\end{proof}

\appendix

\section{Comonoids}\label{comonoids}
 
In this short section we establish additional universal mapping properties for $\Mon(\cals)$ and $\Grp(\cals)$ respectively. This gives a characterization of these $\infty$-categories among all presentable $\infty$-categories and not only among the (pre)additive ones.

 Let us denote by $\FunRAd(\calc,\cald)$ the $\infty$-category of right adjoint functors from $\calc$ to $\cald$, which is a full subcategory of $\Fun(\calc,\cald)$.

\begin{lem}
If $\calc$ and $\cald$ are presentable, then we have canonical equivalences
\[
\Mon\big(\FunRAd(\calc,\cald)\big) \simeq \FunRAd\big(\calc, \Mon(\cald) \big) \quad\text{and}\quad \Grp\big(\FunRAd(\calc,\cald)\big) \simeq \FunRAd\big(\calc, \Grp(\cald) \big).
\]
\end{lem}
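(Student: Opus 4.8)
The plan is to follow the proof of \autoref{lem:algR} almost verbatim, with $\FunRAd(\calc,\cald)$ playing the role of $\FunR(\calc,\cald)$; the one genuinely new ingredient is that being a right adjoint, unlike merely preserving limits, is not a pointwise condition, and handling this is where the real work lies. First I would record the relevant closure properties of $\FunRAd$. A finite product of right adjoints $\calc\to\cald$, computed pointwise, is again a right adjoint --- its left adjoint being the coproduct of the left adjoints of the factors --- and more generally any small limit of right adjoints, computed pointwise, is again a right adjoint. Hence $\FunRAd(\calc,\cald)\subseteq\Fun(\calc,\cald)$ is closed under finite products (so that $\Mon(\FunRAd(\calc,\cald))$ and $\Grp(\FunRAd(\calc,\cald))$ are defined) and, more generally, under all limits, which are therefore formed pointwise in $\cald$. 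This is exactly the property of $\FunR(\calc,\cald)$ that was used in \autoref{lem:algR}.

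Next, exactly as in the proof of \autoref{lem:algR}, write $\Fun^{\times}(\NFin,\cale)\subseteq\Fun(\NFin,\cale)$ for the full subcategory of special $\Gamma$-objects and produce the two fully faithful embeddings
\[
\Mon(\FunRAd(\calc,\cald))\simeq\Fun^{\times}\big(\NFin,\FunRAd(\calc,\cald)\big)\subseteq\Fun(\NFin,\Fun(\calc,\cald))\simeq\Fun(\NFin\times\calc,\cald)
\]
and
\[
\FunRAd(\calc,\Mon(\cald))\subseteq\Fun(\calc,\Fun(\NFin,\cald))\simeq\Fun(\NFin\times\calc,\cald).
\]
Since $\FunRAd(\calc,\cald)$ is closed under products in $\Fun(\calc,\cald)$, the essential image of the first embedding consists of those $F$ such that $F(-,C)\colon\NFin\to\cald$ is special for every $C$ and $F(\n,-)\colon\calc\to\cald$ is a right adjoint for every $\n$. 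The essential image of the second consists of those $F$ whose transpose $\bar F\colon\calc\to\Fun(\NFin,\cald)$ factors through $\Mon(\cald)$ --- equivalently, $F(C,-)$ is special for all $C$, which is the same condition as before --- and is a right adjoint as a functor to $\Mon(\cald)$. Thus the lemma reduces to the assertion that $\bar F\colon\calc\to\Mon(\cald)$ is a right adjoint if and only if each composite $\ev_{\n}\circ\bar F=F(\n,-)$ is.

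One direction is immediate, since $\ev_{\n}\colon\Mon(\cald)\to\cald$ is a right adjoint --- it is the reflective inclusion $\Mon(\cald)\hookrightarrow\Fun(\NFin,\cald)$ followed by evaluation, which has left adjoint $F_{\n}$ --- and right adjoints compose. For the converse, the main obstacle is accessibility: that $\bar F$ preserves limits is pointwise and clear, because $\Mon(\cald)\hookrightarrow\Fun(\NFin,\cald)$ creates limits and limits in $\Fun(\NFin,\cald)$ are computed pointwise, but colimits in $\Mon(\cald)$ are \emph{not} pointwise. Here I would use that $\Mon(\cald)$ is presentable (\autoref{prop:monpres}) and, more precisely, is the localization of $\Fun(\NFin,\cald)$ at a small set of maps, so that for a sufficiently large regular cardinal $\kappa$ the inclusion $\Mon(\cald)\hookrightarrow\Fun(\NFin,\cald)$ preserves $\kappa$-filtered colimits; that is, $\kappa$-filtered colimits in $\Mon(\cald)$ are computed pointwise. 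As $\NFin$ is essentially small, after enlarging $\kappa$ we may further assume each $F(\n,-)$ preserves $\kappa$-filtered colimits. Evaluating at each $\n$ then shows that $\bar F$ preserves $\kappa$-filtered colimits, hence is accessible, hence --- being also limit-preserving --- a right adjoint by the adjoint functor theorem. This identifies the two essential images and proves the first equivalence. The case of groups is identical: $\Grp(\cald)$ is again presentable (\autoref{prop:monpres}) and closed under limits in $\Fun(\NFin,\cald)$, and one simply replaces special $\Gamma$-objects by very special ones throughout, using characterization~(4) of \autoref{prop:gpl}.
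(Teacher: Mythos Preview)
Your proof is correct and, at bottom, the same as the paper's, though you have made explicit what the paper leaves to a single sentence. The paper simply observes that, between presentable $\infty$-categories, right adjoints are exactly the accessible limit-preserving functors (the adjoint functor theorem), and then asserts that the proof of \autoref{lem:algR} goes through verbatim with ``limit-preserving'' replaced by ``accessible and limit-preserving.'' Your detailed accessibility argument --- enlarging $\kappa$ so that the inclusion $\Mon(\cald)\hookrightarrow\Fun(\NFin,\cald)$ preserves $\kappa$-filtered colimits and each $F(\n,-)$ is $\kappa$-accessible --- is precisely what justifies that this replacement really works, and is the content the paper's one-liner is implicitly relying on. So there is no genuine difference of approach; you have simply unpacked the step the paper elides.
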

\begin{proof}
We note that right adjoint functors between presentable $\infty$-categories can be described as accessible functors that preserve limits. Then then the proof works exactly the same as the proof of Lemma \ref{lem:algR}.
\end{proof}

\begin{defn}
Let $\calc$ be an $\infty$-category with finite coproducts. We define the $\infty$-categories of \emph{comonoids} and \emph{cogroups} in~$\calc$ to be the respective $\infty$-categories
\[
\coAlg(\calc) = \Mon\big(\calc\op\big)\op \quad\text{and}\quad 
\coGrp(\calc) = \Grp\big(\calc\op\big)\op.
\]
\end{defn}

\begin{rmk}
The comonoids as defined above are comonoids for the coproduct as tensor product. This is a structure which is often rather trivial. 
For example in the $\infty$-category~$\cals$ of spaces (or in the ordinary category of sets) there is exactly one comonoid in the sense above, namely the empty set~$\emptyset$. 
\end{rmk}

\begin{prop}\label{mappingproperty}
Let~$\calc$ and~$\cald$ be presentable $\infty$-categories. Then there are natural equivalences
\[
\FunL(\Mon(\calc) , \cald ) \simeq \coAlg\big(\FunL(\calc,\cald)\big) \quad\text{and}\quad
\FunL(\Grp(\calc) , \cald ) \simeq \coGrp\big(\FunL(\calc,\cald)\big).
\]
In particular, for a presentable $\infty$-category~$\cald$ we have natural equivalences
\[\FunL(\Mon(\cals), \cald) \simeq \coAlg(\cald)\quad\text{and}\quad\FunL(\Grp(\cals), \cald) \simeq \coGrp(\cald).\] 
\end{prop}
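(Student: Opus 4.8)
The plan is to establish the first pair of equivalences by a short chain of manipulations with opposite categories, built on the preceding lemma, and then to derive the ``in particular'' statement by specializing to $\calc=\cals$ together with the identification $\FunL(\cals,\cald)\simeq\cald$ recorded above. (One could instead first reduce the general case to $\calc=\cals$ using $\Mon(\calc)\simeq\calc\otimes\Mon(\cals)$ from \autoref{thm:idem} and the closed structure $\FunL(\calc\otimes\Mon(\cals),\cald)\simeq\FunL(\Mon(\cals),\FunL(\calc,\cald))$ on $\Prl$; since $\FunL(\calc,\cald)$ is again presentable this reduction gains nothing, so I would argue directly and uniformly.)

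The only input I would invoke from outside this paper is the standard fact that, for presentable $\infty$-categories $\cale_1$ and $\cale_2$, passing to adjoints and using the mate correspondence (which reverses the direction of natural transformations) gives an equivalence $\FunL(\cale_1,\cale_2)\simeq\FunRAd(\cale_2,\cale_1)\op$. Since $\Mon(\calc)$ is presentable by \autoref{prop:monpres}, applying this with $\cale_1=\Mon(\calc)$, $\cale_2=\cald$ and then the preceding lemma in the form $\FunRAd(\cald,\Mon(\calc))\simeq\Mon(\FunRAd(\cald,\calc))$ gives
\[
\FunL(\Mon(\calc),\cald)\;\simeq\;\FunRAd(\cald,\Mon(\calc))\op\;\simeq\;\Mon\big(\FunRAd(\cald,\calc)\big)\op.
\]
Applying the adjoint correspondence once more, now as $\FunRAd(\cald,\calc)\simeq\FunL(\calc,\cald)\op$, rewrites the right-hand side as $\Mon\big(\FunL(\calc,\cald)\op\big)\op$, which is exactly $\coAlg\big(\FunL(\calc,\cald)\big)$ by the definition $\coAlg(\cale)=\Mon(\cale\op)\op$. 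The group case is literally the same argument, with the group half of the preceding lemma and with $\coGrp(\cale)=\Grp(\cale\op)\op$. Finally, taking $\calc=\cals$ and using $\FunL(\cals,\cald)\simeq\cald$ yields $\FunL(\Mon(\cals),\cald)\simeq\coAlg(\cald)$ and $\FunL(\Grp(\cals),\cald)\simeq\coGrp(\cald)$.

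The only delicate point I anticipate is the bookkeeping of variances: one must be sure the two uses of $\FunL(-,-)\simeq\FunRAd(-,-)\op$ are applied on the correct sides, so that the outer $\op$'s compose into the pattern $\Mon((-)\op)\op$ occurring in the definition of $\coAlg$ rather than its opposite. I would also note, so that the intermediate terms make sense, that $\FunL(\calc,\cald)$ is presentable and hence has finite coproducts, so $\FunL(\calc,\cald)\op$ has finite products and $\Mon(-)$ and $\Grp(-)$ are legitimately defined on it; the finite products needed to form the $\Mon$ in the middle term are supplied by the preceding lemma itself. Apart from this variance bookkeeping, every step is an instance of the preceding lemma, of the adjoint correspondence, or of an unwinding of definitions, so I expect no substantive obstacle.
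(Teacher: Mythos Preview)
Your argument is correct and matches the paper's own proof essentially line for line: the same chain $\FunL(\Mon(\calc),\cald)\simeq\FunRAd(\cald,\Mon(\calc))\op\simeq\Mon(\FunRAd(\cald,\calc))\op\simeq\Mon(\FunL(\calc,\cald)\op)\op=\coAlg(\FunL(\calc,\cald))$, followed by the specialization $\calc=\cals$ via $\FunL(\cals,\cald)\simeq\cald$. Your additional remarks on variance bookkeeping and the presentability of $\FunL(\calc,\cald)$ are sound but not needed beyond what the paper already uses.
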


\begin{proof}
Let us recall that given two $\infty$-categories $\cale$ and $\calf$ then there is an equivalence of categories $\FunL(\cale,\calf)$ and $\FunRAd(\calf,\cale)\op$ (\cite[Proposition~5.2.6.2]{HTT}).
The adjoint functor theorem (\cite[Corollary~5.5.2.9]{HA}) together with \autoref{lem:algR} then yield the following chain of equivalences:
\begin{eqnarray*}
\FunL(\Mon(\calc) ,\cald ) & \simeq &\FunRAd\big(\cald,\Mon(\calc)\big)\op\\
& \simeq& \Mon\big(\FunRAd(\cald,\calc)\big)\op \\
& \simeq& \Mon\big(\FunL(\calc,\cald)\op\big)\op \\
& =&  \coAlg\big(\FunL(\calc,\cald)\big).
\end{eqnarray*}
In the special case of $\calc=\cals$ we can use the universal property of $\infty$-categories of presheaves (\cite[Theorem~5.1.5.6]{HTT}) to extend the above chain of equivalences by
\[\coAlg\big(\FunL(\cals,\cald)\big)\simeq\coAlg(\cald).\]
This settles the case of monoids and the case of groups works the same. 
\end{proof}

\section{Algebraic theories and monadic functors}\label{sec:app}

\newcommand{\Algff}{\Mod^\mathrm{ff}}
\newcommand{\Modff}{\Algff}
\newcommand{\map}{\mathrm{map}}
\newcommand{\Monad}{\mathrm{Monad}}

In this section we give a short discussion of Lawvere algebraic theories in $\infty$-categories and show that our examples are algebraic. For other treatments of $\infty$-categorical algebraic theories, see \cite{cranch}, \cite{cranch2}, \cite[Section 32]{Joyal} and \cite[Section 5.5.8]{HTT}. We write $\mathcal{F}\mathrm{in}$ for the category of finite sets. 
\begin{defn}
An \emph{algebraic theory} is an $\infty$-category~$\bbT$ with finite products and a distinguished object $1_{\mathbb{T}}$, such that the unique product-preserving functor $\mathrm{N}(\mathcal{F}\mathrm{in})\op\to\mathbb{T}$
which sends the singleton to $1_\mathbb{T}$ is essentially surjective. A \emph{morphism of algebraic theories} is a functor which preserves products and the distinguished object. We write $\Th\subseteq(\Catoo^\Pi)_\ast$ for 
the $\infty$-category of theories and morphisms thereof.
\end{defn}

This is the obvious generalization of ordinary algebraic theories, as defined by Lawvere~\cite{Lawvere}, to $\infty$-categories.

\begin{defn}
 Let $\calc$ be an $\infty$-category with finite products. A \emph{model} (or an \emph{algebra}) in $\calc$ for an algebraic theory $\bbT$ is a finite product preserving functor $\bbT\to\calc$. We write $\Mod_{\bbT}(\calc)$ for the $\infty$-category of models of~$\bbT$ in~$\calc$, i.e., for the full subcategory of $\Fun(\bbT,\calc)$ spanned by the models. 
\end{defn}

If $\calc$ is a presentable $\infty$-category and $\bbT$ a theory, then $\Mod_{\bbT}(\calc)$ is again presentable. This follows since $\Mod_{\bbT}(\calc)$ is an accessible localization of the presentable $\infty$-category $\Fun(\bbT,\calc)$ (the proof is similar to the one of \autoref{prop:monpres} which takes care of the case of commutative monoids). Applying the adjoint functor theorem we also get that the forgetful functor $\Mod_{\bbT}(\calc)\to\calc$, i.e. the evaluation at the distinguished object $1_\bbT,$ has a left adjoint.

\begin{prop}\label{prop_ten_al}
Let $\calc$ be a presentable $\infty$-category and $\mathbb{T}$ a theory. Then we have an equivalence
\begin{equation*}
 \Mod_\bbT(\calc) \simeq \calc \otimes \Mod_{\bbT}(\cals).
\end{equation*}  
\end{prop}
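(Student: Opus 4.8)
The plan is to reduce the statement to the base-change equivalence for algebra categories that was already established in the proof of \autoref{prop_rig_ten}, namely the equivalence $\Alg_\calo(\cald)\otimes\calc\simeq\Alg_\calo(\cald\otimes\calc)$, and then to argue that the same proof strategy works verbatim when $\Alg_\calo(-)$ is replaced by the model functor $\Mod_\bbT(-)$. First I would observe that $\Mod_\bbT(-)$ is an accessible localization of $\Fun(\bbT,-)$, exactly as recalled in the paragraph preceding the statement, so that (as in \autoref{prop:monpres}) the model subcategory is cut out by a small set of morphisms, the ones forcing a functor $\bbT\to\cale$ to preserve the finite products $1_\bbT^{\sqcup n}\to\prod 1_\bbT$ (equivalently, $\prod 1_\bbT\to 1_\bbT$ in $\bbT$ itself). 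This identifies $\Mod_\bbT(\calc)$ with the full subcategory of $\Fun(\bbT,\calc)$ of product-preserving functors, stable under limits, hence presentable and reflective.

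Next I would run the presheaf case. If $\calc=\calp(\calc_0)$ for a small $\infty$-category $\calc_0$, then $\cals\otimes\calp(\calc_0)\simeq\calp(\calc_0)$ and more generally $\cale\otimes\calp(\calc_0)\simeq\Fun(\calc_0\op,\cale)$ for any presentable $\cale$; combining this with the evident identification $\Fun(\calc_0\op,\Mod_\bbT(\cals))\simeq\Mod_\bbT(\Fun(\calc_0\op,\cals))$ — which holds because finite products in a functor category are computed pointwise, so a functor $\calc_0\op\to\Fun(\bbT,\cals)$ lands in product-preserving functors in the $\bbT$-variable iff the corresponding functor $\bbT\to\Fun(\calc_0\op,\cals)$ does — yields
\[
\Mod_\bbT(\calp(\calc_0))\simeq\Fun(\calc_0\op,\Mod_\bbT(\cals))\simeq\Mod_\bbT(\cals)\otimes\calp(\calc_0).
\]
Then, for general presentable $\calc$, I would use that $\calc$ is a reflective (accessible, limit-preserving) localization of some $\calp(\calc_0)$, taking $\calc_0=\calc^\kappa$ for $\kappa$ large enough. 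Using $\cale\otimes\calc\simeq\FunR(\calc\op,\cale)$ (the description recalled in \S\ref{sec:smash}), both $\Mod_\bbT(\cals)\otimes\calc$ and $\Mod_\bbT(\calc)$ embed as full subcategories of the corresponding objects over $\calp(\calc_0)$: the first because $\FunR(\calc\op,-)\subseteq\FunR(\calc_0\op,-)=\Fun(\calc_0\op,-)$ is a full subcategory, and the second because a product-preserving functor $\bbT\to\calc$ is the same as a product-preserving functor $\bbT\to\calp(\calc_0)$ that factors through $\calc$ (which makes sense since $\calc\hookrightarrow\calp(\calc_0)$ preserves products). One then checks these two full subcategories of $\Mod_\bbT(\cals)\otimes\calp(\calc_0)\simeq\Mod_\bbT(\calp(\calc_0))$ agree, by the characterization: an object lies in either one precisely when its image in $\Fun(\calc_0\op,\cals)$ under evaluation at $1_\bbT$ (equivalently, under the forgetful functor) factors through $\calc$; this uses that the reflection and the forgetful functor $\Mod_\bbT\to(-)$ commute suitably, i.e. that $\Mod_\bbT$ is a localization compatible with the localization $\calp(\calc_0)\to\calc$.

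The main obstacle I expect is the last matching step: making precise that "lies in $\Mod_\bbT(\cals)\otimes\calc$" and "lies in $\Mod_\bbT(\calc)$", as conditions on objects of $\Mod_\bbT(\calp(\calc_0))$, are the \emph{same} condition. In the $\Alg_\calo$ case treated above, the authors handle this by passing to a monochromatic suboperad and reducing to the condition that the projection to $\cald\otimes\calp(\calc_0)$ factor through $\cald\otimes\calc$; here there is no operad to restrict along, but the role of "monochromatic" is played by the single generating object $1_\bbT$, so the condition is simply that the underlying object (value at $1_\bbT$, or equivalently the image under the forgetful functor to $\calp(\calc_0)$, which under $\cale\otimes\calp(\calc_0)\simeq\Fun(\calc_0\op,\cale)$ corresponds to the canonical map to $\calp(\calc_0)$) lands in $\calc$; one must check that a product-preserving $\bbT\to\calp(\calc_0)$ whose value at $1_\bbT$ is in $\calc$ automatically has all its values in $\calc$, which is immediate since every object of $\bbT$ is a finite product of copies of $1_\bbT$ and $\calc\hookrightarrow\calp(\calc_0)$ is closed under finite products. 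This closes the argument.
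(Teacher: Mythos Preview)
Your argument is correct, but it takes a considerably longer route than the paper's. You model your proof on \autoref{prop_rig_ten}: first handle the presheaf case $\calc=\calp(\calc_0)$, then embed a general presentable $\calc$ as a reflective subcategory and match the two full subcategories. The paper instead proceeds in one line, exactly as in the proof of \autoref{thm:idem}: using the description $\calc\otimes\cald\simeq\FunR(\calc\op,\cald)$ of the tensor product, it suffices to show
\[
\Mod_\bbT\big(\FunR(\calc\op,\cals)\big)\simeq\FunR\big(\calc\op,\Mod_\bbT(\cals)\big),
\]
and this follows by the same argument as \autoref{lem:algR}: both sides are full subcategories of $\Fun(\bbT\times\calc\op,\cals)$ cut out by the conditions ``product-preserving in the $\bbT$-variable'' and ``limit-preserving in the $\calc\op$-variable'', and since limits in functor categories are pointwise these conditions coincide from either side. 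No passage through presheaves or matching of subcategories is needed.

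The reason \autoref{prop_rig_ten} requires your two-step strategy is that $\Alg_\calo(-)$ depends on a symmetric monoidal structure, and it is not immediate that the monoidal structure on $\FunR(\calc\op,\cald)$ is pointwise; hence one first retreats to $\Fun(\calc_0\op,\cald)$, where the monoidal structure \emph{is} pointwise and the swap is obvious, and then localizes. For $\Mod_\bbT(-)$ the relevant structure is just finite products, which are always computed pointwise in $\FunR(\calc\op,\cald)$, so the direct swap works. Your detour is therefore valid but avoidable; the paper's approach is both shorter and makes transparent why the result holds.
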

\begin{proof}
The same proof as for \autoref{lem:algR} shows that  we have an equivalence
$$  \Mod_\bbT(\FunR(\calc^{op},\cals)) \simeq \FunR(\calc^{op}, \Mod_\bbT(\cals)).$$
This then implies the claim since we have $\calc \otimes \cald \simeq \FunR(\calc^{op},\cald)$ for any presentable $\infty$-category  $\cald$.
\end{proof}

A \emph{monad} on an $\infty$-category $\calc$ is an algebra $M$ in the monoidal $\infty$-category $\Fun(\calc,\calc)$ of endofunctors; see \cite[Chapter 6.2]{HA} for details.
Any such monad $M\in\Alg(\Fun(\calc,\calc))$ admits an $\infty$-category of \emph{modules} which we denote $\Mod_M(\calc)$. This $\infty$-category comes equipped with a forgetful functor $\Mod_M(\calc) \to \calc$ which is  a right adjoint. 
Thus, given an arbitrary right adjoint functor $U\colon \cald \to \calc$, it is natural to ask whether this functor is equivalent to the forgetful functor from modules over a monad on $\calc$. In this case the corresponding monad is uniquely determined as the composition $M = U \circ F$, where $F$ is a left adjoint of~$U$. The functors $U$ for which this is the case are called 
\emph{monadic}. 

The Barr-Beck theorem (also called Beck's monadicity theorem) gives necessary and sufficient conditions for a functor $U$ to be monadic. The conditions are that $U$ is conservative (i.e., reflects equivalences) and that $U$ preserves $U$-split geometric realizations (\cite[Theorem~6.2.2.5]{HA}). We will not need to discuss here what $U$-split means exactly since in our cases all geometric realizations will be preserved.

\begin{prop}\label{prop:monadic}
Let $\calc$ be a presentable $\infty$-category and let $\bbT$ be a theory. Then the forgetful functor $\Mod_{\bbT}(\calc)\to\calc$ is monadic and preserves sifted colimits.
\end{prop}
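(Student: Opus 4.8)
The plan is to verify the hypotheses of the Barr--Beck--Lurie theorem \cite[Theorem 6.2.2.5]{HA} for the forgetful functor $U = \ev_{1_\bbT}\colon\Mod_\bbT(\calc)\to\calc$, while at the same time establishing preservation of sifted colimits; the latter will in fact subsume the colimit hypothesis of Barr--Beck. A left adjoint to $U$ has already been produced above via the adjoint functor theorem, so it remains to check that $U$ is conservative and that it preserves geometric realizations of $U$-split simplicial objects (and, separately, that it preserves all sifted colimits).

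For conservativity, recall that $\Mod_\bbT(\calc)$ is by definition a full subcategory of $\Fun(\bbT,\calc)$, so a morphism $f\colon M\to N$ of models is an equivalence if and only if $f$ is a pointwise equivalence, i.e.\ $f(t)\colon M(t)\to N(t)$ is an equivalence for every $t\in\bbT$ (\cite[Proposition 5.1.2.3]{HTT}). By the defining property of an algebraic theory, the product-preserving functor $\N(\mathcal{F}\mathrm{in})\op\to\bbT$ sending the singleton to $1_\bbT$ is essentially surjective; since in $\N(\mathcal{F}\mathrm{in})\op$ every object is a finite power of the singleton, every object $t\in\bbT$ is equivalent to $1_{\bbT}^{\times n}$ for some $n$. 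As $M$ and $N$ preserve finite products and $f$ is a natural transformation, the map $f(1_{\bbT}^{\times n})$ is identified, under the equivalences $M(1_{\bbT}^{\times n})\simeq M(1_\bbT)^{\times n}$ and similarly for $N$, with the power $f(1_\bbT)^{\times n}$. Hence if $U(f)=f(1_\bbT)$ is an equivalence, then $f(t)$ is an equivalence for every $t$, and $f$ is an equivalence. Thus $U$ is conservative.

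For the colimit condition I would prove the stronger statement that $U$ preserves all sifted colimits. The key input is that sifted colimits commute with finite products in any presentable $\infty$-category; this reduces to the case of $\cals$ (\cite[Lemma 5.5.8.11]{HTT}) and then passes to presheaf categories pointwise and to accessible localizations. Granting this, a sifted colimit in $\Fun(\bbT,\calc)$ of product-preserving functors is again product-preserving, since colimits in $\Fun(\bbT,\calc)$ are computed pointwise in $\calc$ and each functor in the diagram carries the finite products of $\bbT$ to finite products in $\calc$. Therefore sifted colimits in $\Mod_\bbT(\calc)$ agree with those in $\Fun(\bbT,\calc)$, i.e.\ are computed pointwise, and since $\ev_{1_\bbT}\colon\Fun(\bbT,\calc)\to\calc$ preserves all colimits, $U$ preserves sifted colimits. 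In particular $U$ preserves geometric realizations, a fortiori those of $U$-split simplicial objects, and $\Mod_\bbT(\calc)$ (being presentable, as noted above) admits them. Combining the existence of the left adjoint, conservativity, and this colimit-preservation, the Barr--Beck--Lurie theorem yields that $U$ is monadic.

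The main obstacle is the commutation of sifted colimits with finite products in a general presentable $\infty$-category; everything else is a formal manipulation of pointwise (co)limits in functor categories together with the product-generation of $\bbT$ by $1_\bbT$. An alternative that avoids invoking this commutation directly would be to first treat $\calc=\cals$ and then bootstrap to arbitrary presentable $\calc$ via the base-change equivalence $\Mod_\bbT(\calc)\simeq\calc\otimes\Mod_\bbT(\cals)$ of \autoref{prop_ten_al}, but the direct route above seems cleaner.
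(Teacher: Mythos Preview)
Your argument follows essentially the same route as the paper's: verify conservativity of $\ev_{1_\bbT}$ (you spell out the reason via the product-generation of $\bbT$ by $1_\bbT$, while the paper simply declares it clear), show that the inclusion $\Fun^\Pi(\bbT,\calc)\hookrightarrow\Fun(\bbT,\calc)$ preserves sifted colimits so that the composite with the pointwise evaluation does too, and conclude monadicity from Barr--Beck--Lurie. For the sifted-colimit step the paper appeals directly to \cite[Proposition~5.5.8.10(4)]{HTT} rather than reproving the commutation of sifted colimits with finite products.

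One point deserves more care. Your reduction ``passes to presheaf categories pointwise and to accessible localizations'' does not go through as written. If $i\colon\calc\hookrightarrow\calp$ is a reflective inclusion into a presheaf $\infty$-category with reflector $L$, then for sifted diagrams $D,E$ in $\calc$ one computes
\[
\colim_\calc(D\times E)\simeq L\bigl((\colim_\calp iD)\times(\colim_\calp iE)\bigr),
\qquad
(\colim_\calc D)\times(\colim_\calc E)\simeq (L\colim_\calp iD)\times(L\colim_\calp iE),
\]
and a general accessible reflector $L$ need not preserve binary products, so these two expressions cannot be identified by your argument. You correctly flag this commutation as the main obstacle, but the sketch you give does not discharge it; you should either cite the result from the literature, as the paper does, or supply an independent argument.
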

\begin{proof}
We will show that the evaluation $\Fun^\Pi(\bbT,\calc)\to\calc$
is conservative and preserves sifted colimits.
The result then follows immediately from the monadicity theorem.
The fact that the functor is conservative is clear, so it remains to check the sifted colimit condition.
But the inclusion of the finite product preserving functors
\[
\Fun^\Pi(\bbT,\calc)\to\Fun(\bbT,\calc)
\]
preserves sifted colimits by (4) of \cite[Proposition 5.5.8.10]{HA}, and as colimits in functor $\infty$-categories are computed pointwise the evaluation
\[
\Fun(\bbT,\calc)\to\calc
\]
also preserves sifted colimits.
\end{proof}

We will obtain a converse to the previous proposition in the case of the $\infty$-category of spaces; namely, in this case we will identify algebraic theories with certain monads. To this end, note that  
an arbitrary monadic functor $U\colon \Mod_M(\cals) \to\cals$ defines a theory $\bbT_M$ by 
\[\bbT_M:={\big(\Modff_M(\cals)}\big)\op,\]
where $\Modff_M(\cals)\subseteq\Mod_M(\cals)$ is the full subcategory spanned by the free $M$-algebras on finite sets (which we abusively refer to as {\em finite free} algebras, and should not to be confused with more general free algebras on finite or finitely presented spaces).
There is a canonical functor
\[
R\colon\Mod_M(\cals) \to \Mod_{\bbT_M}(\cals)
\]
from modules for $M$ to models to the associated theory $\bbT_M$, which is just the restriction of the Yoneda embedding to the full subcategory $\Modff_M(\cals)$.

\begin{defn}\label{monadalg}
A monadic functor $U\colon\Mod_M(\cals)\to\cals$ is called \emph{algebraic} if
\[
R\colon\Mod_M(\cals) \to \Mod_{\bbT_M}(\cals)
\]
is an equivalence of $\infty$-categories over $\cals$.
We also say that a monad $M$ on spaces is \emph{algebraic} if the associated forgetful functor $U\colon\Mod_M(\cals)\to\cals$ is algebraic.
\end{defn}

The main result of this section is \autoref{algebraicity}, which provides necessary and sufficient conditions for a monadic functor to spaces to be algebraic.
As preparation, we first collect the following result, a straightforward generalization of a well-known result in ordinary category theory.

\begin{prop}\label{prop:modpres}
Let $\calc$ be a presentable $\infty$-category and let $M\colon\calc\to\calc$ be a monad which commutes with $\kappa$-filtered colimits for some infinite regular cardinal $\kappa$. Then $\Mod_M(\calc)$ is a presentable $\infty$-category.
\end{prop}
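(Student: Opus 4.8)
The plan is to deduce presentability from the standard characterization of presentable $\infty$-categories as those which are accessible and admit all small limits \cite{HTT}; granting that, cocompleteness of $\Mod_M(\calc)$ then comes for free. Completeness is the easy half: the forgetful functor $U\colon\Mod_M(\calc)\to\calc$ is the forgetful functor attached to the monad $M$, hence it creates small limits, and $\calc$ admits small limits because it is presentable. So everything reduces to proving that $\Mod_M(\calc)$ is accessible, and it is here that the hypothesis that $M$ commutes with $\kappa$-filtered colimits is used.

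For accessibility I would first enlarge $\kappa$ so that, in addition, $\calc$ is $\kappa$-compactly generated and $M$ carries $\kappa$-compact objects of $\calc$ to $\kappa$-compact objects --- both can be arranged for all sufficiently large $\kappa$, since $M$ is an accessible functor (\cite[Section~5.4]{HTT}). As $M$ preserves $\kappa$-filtered colimits, $\Mod_M(\calc)$ admits $\kappa$-filtered colimits and $U$ creates them; consequently, for every $c\in\calc^\kappa$ the free module $F(c)$ is $\kappa$-compact in $\Mod_M(\calc)$, because $\Map_{\Mod_M(\calc)}(F(c),-)\simeq\Map_\calc(c,U(-))$ and the latter commutes with $\kappa$-filtered colimits. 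The crux is then that these free modules on $\kappa$-compact objects, together with their geometric realizations, build all of $\Mod_M(\calc)$ under $\kappa$-filtered colimits: by the functorial bar resolution, an arbitrary $M$-module $N$ is the geometric realization of the simplicial $M$-module $[n]\mapsto(FU)^{n+1}(N)$, whose $n$-th term is the free module on $U(FU)^n(N)\in\calc$, and $U(FU)^n(N)$ is itself a $\kappa$-filtered colimit of $\kappa$-compact objects; commuting the realization past these approximations exhibits $N$ as a $\kappa$-filtered colimit of $M$-modules of the form $\lvert F(c_\bullet)\rvert$ with all $c_n\in\calc^\kappa$. Since a geometric realization is a $\kappa$-small colimit, these modules form an essentially small family of $\kappa$-compact generators, and $\Mod_M(\calc)$ is therefore $\kappa$-accessible.

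The step I expect to be the main obstacle is this last one. Indeed $\Mod_M(\calc)$ need not admit all geometric realizations --- $U$ need not preserve them --- so one must check carefully that the particular realizations of free modules occurring above do exist in $\Mod_M(\calc)$ and are $\kappa$-compact, and that the bar resolution is genuinely compatible with the chosen $\kappa$-filtered approximations of the simplicial object $[n]\mapsto U(FU)^n(N)$ in $\Fun(\Delta\op,\calc)$. This is precisely the $\infty$-categorical counterpart of the classical fact that the category of algebras for an accessible monad on a locally presentable category is again locally presentable, and the cleanest rigorous route may instead be to exhibit $\Mod_M(\calc)$ as a full subcategory of the lax equalizer $\mathrm{LaxEq}(M,\id_{\calc})$ --- which is a fiber product of accessible $\infty$-categories along accessible functors and hence accessible by \cite[Section~5.4.6]{HTT} --- and then to verify that this full subcategory is accessibly embedded.
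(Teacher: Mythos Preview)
Your overall shape is reasonable but differs substantially from the paper's argument, and there are two points worth flagging.

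First, the reduction you open with is not quite the standard one: presentability is characterized in \cite{HTT} as accessibility together with the existence of small \emph{colimits}, not limits. It is true (at least classically, via Ad\'amek--Rosick\'y) that an accessible category with small limits is automatically cocomplete, but this is an additional theorem rather than the definition, and you would need to supply or cite an $\infty$-categorical version. The paper avoids this issue entirely by proving directly that $\Mod_M(\calc)\simeq\Ind_\kappa(\Mod_M(\calc)^\kappa)$.

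Second, and more substantively, your bar-resolution route has exactly the gap you yourself identify: the realizations $\lvert F(c_\bullet)\rvert$ for arbitrary simplicial objects $c_\bullet$ in $\calc^\kappa$ are not $U$-split, so there is no a priori reason they exist in $\Mod_M(\calc)$ before presentability is known. The paper's argument sidesteps this neatly: rather than resolving an arbitrary module $A$ by a bar construction, it tests the canonical map
\[
\colim_{A'\in\Mod_M(\calc)^\kappa_{/A}} A'\longrightarrow A
\]
against the objects $F(X)$ for $X\in\calc^\kappa$, using that these detect equivalences (since $U$ is conservative and $\calc$ is $\kappa$-compactly generated) and that the slice $\Mod_M(\calc)^\kappa_{/A}$ is $\kappa$-filtered. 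This only ever uses $\kappa$-filtered colimits in $\Mod_M(\calc)$, which exist because $U$ creates them. Your lax-equalizer alternative would also work and is essentially the approach taken in \cite{HA}, but the paper's direct computation is shorter and more self-contained.
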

\begin{proof}
To begin with let us choose a regular cardinal $\kappa$ such that $\calc$ is $\kappa$-compactly generated and $M$ commutes with $\kappa$-filtered colimits. Let $\calc^\kappa\subseteq\calc$ and $\Mod_M(\calc)^\kappa\subseteq\Mod_M(\calc)$ be the respective full subcategories spanned by the $\kappa$-compact objects. We claim that there is an equivalence $\Ind_\kappa(\Mod_M(\calc)^\kappa)\simeq \Mod_M(\calc).$ Since $\Mod_M(\calc)$ admits $\kappa$-filtered colimits, the inclusion $\Mod_M(\calc)^\kappa\subseteq\Mod_M(\calc)$ induces a functor
\[
\phi\colon\Ind_\kappa(\Mod_M(\calc)^\kappa)\to \Mod_M(\calc)
\]
which we want to show is an equivalence. The fully faithfulness of $\phi$ is a special case of the following: if $\cald$ be an $\infty$-category with $\kappa$-filtered colimits, then the inclusion $\cald^\kappa\subseteq\cald$ of the $\kappa$-compact objects induces a fully faithful functor $\Ind_\kappa(\cald^\kappa)\to\cald.$ Thus it remains to show that $\phi$ is essentially surjective.

Because $M$ commutes with $\kappa$-filtered colimits, we see that, if $X\in\calc^\kappa$, then $FX\in\Mod_M(\calc)^\kappa$, where $F\colon\calc\to\Mod_M(\calc)$ denotes a left adjoint to the forgetful functor $\Mod_M(\calc)\to\calc.$
Since the forgetful functor $\Mod_M(\calc)\to\calc$ is conservative and $\calc$ is $\kappa$-compactly generated, a map $f\colon A\to B$ of $M$-modules is an equivalence if and only if
\[
\map_{\Mod_M(\calc)}(FX,A)\to \map_{\Mod_M(\calc)}(FX,B)
\]
is an equivalence for all $X\in\calc^\kappa$.
We will apply this criterion to the map
\[
\colim_{A'\in\Mod_M(\calc)^\kappa_{/A}} A'\to A\, ,
\]
whose domain is a $\kappa$-filtered colimit, in order to obtain the essential surjectivity of~$\phi$.
We first show that, for any $X\in\calc^\kappa$, the induced map
\[
\colim_{\Mod_M(\calc)^\kappa_{/A}}\pi_0\map(FX,A')\to\pi_0\map(FX,A)
\]
is an isomorphism.
Indeed, it is surjective because any (homotopy class of) map $FX\to A$ is the image of the identity map $FX\to A'$ for $A'=FX$, which is by construction a $\kappa$-compact object of $\Mod_M(\calc)$.
Similarly, injectivity follows because given any two maps $f,g:FX\to A'$, the fact that $\Mod_M(\calc)^\kappa_{/A}$ is $\kappa$-filtered implies that there exists an $A''\to A$ which coequalizes $f$ and $g$.
Replacing $X$ by $K\otimes X$ for some finite simplicial set $K$, and noting that $K\otimes X$ is a $\kappa$-compact object of $\calc$ since $K$ is finite, we obtain an isomorphism
\[
\pi_0\map(K,\colim\map(FX,A'))\cong\pi_0\map(K,\map(FX,A)).
\]
It follows that $\colim\map(FX,A')\to\map(FX,A)$ is a homotopy equivalence, as desired.
\end{proof}

\begin{thm}\label{algebraicity}
 A monadic functor $U\colon\Mod_M(\cals)\to\cals$ is algebraic if and only if it preserves sifted colimits. 
\end{thm}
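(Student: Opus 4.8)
The plan is to prove the two implications separately. The easy direction is that algebraicity implies preservation of sifted colimits: if $R\colon\Mod_M(\cals)\to\Mod_{\bbT_M}(\cals)$ is an equivalence over $\cals$, then the forgetful functor $\Mod_{\bbT_M}(\cals)\to\cals$ preserves sifted colimits by \autoref{prop:monadic} (applied with $\calc=\cals$ and $\bbT=\bbT_M$), hence so does $U$.

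For the converse, suppose $U$ preserves sifted colimits. First I would record the consequences of this hypothesis: the monad $M=U\circ F$ preserves sifted colimits, since $F$ preserves all colimits; in particular $M$ is accessible, so $\Mod_M(\cals)$ is presentable by \autoref{prop:modpres}, and $U$ moreover preserves geometric realizations and filtered colimits, hence creates them. I would then observe that every finite free algebra $F(\underline n)$ is a compact projective object of $\Mod_M(\cals)$: indeed $\Map_{\Mod_M(\cals)}(F(\underline n),-)\simeq(U(-))^{n}$, and $U$ preserves sifted colimits while finite powers commute with sifted colimits in $\cals$. Next I would check that these objects generate $\Mod_M(\cals)$ under sifted colimits: every $M$-algebra $A$ is the geometric realization of the bar resolution by free algebras $F(X)$ (valid since $U$ creates geometric realizations), and every space $X$ is a sifted colimit of finite sets, i.e.\ $\cals\simeq\mathrm{P}_\Sigma(\N(\mathcal{F}\mathrm{in}))$, so each $F(X)$ is a sifted colimit of finite free algebras. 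Since $\Modff_M(\cals)$ is closed under finite coproducts ($F(S)\sqcup F(T)\simeq F(S\sqcup T)$), it follows that $\Mod_M(\cals)\simeq\mathrm{P}_\Sigma\big(\Modff_M(\cals)\big)$, and that this equivalence is implemented by the restricted Yoneda embedding $A\mapsto\Map_{\Mod_M(\cals)}(-,A)|_{\Modff_M(\cals)}$.

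The remaining point is to identify the right-hand side with $\Mod_{\bbT_M}(\cals)$. Since $\bbT_M=\big(\Modff_M(\cals)\big)\op$ and $\Modff_M(\cals)$ admits finite coproducts, one has $\Mod_{\bbT_M}(\cals)=\Fun^\Pi(\bbT_M,\cals)\simeq\mathrm{P}_\Sigma\big(\Modff_M(\cals)\big)$. Tracing through the definitions, the composite equivalence $\Mod_M(\cals)\simeq\mathrm{P}_\Sigma\big(\Modff_M(\cals)\big)\simeq\Mod_{\bbT_M}(\cals)$ is exactly the comparison functor $R$, and it is a functor over $\cals$ because evaluating $R(A)$ at the distinguished object $1_{\bbT_M}=F(\ast)$ returns $\Map_{\Mod_M(\cals)}(F(\ast),A)\simeq U(A)$. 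Hence $R$ is an equivalence over $\cals$, that is, $U$ is algebraic.

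The main obstacle I anticipate is the bookkeeping needed to verify that the finite free $M$-algebras are genuinely compact projective generators of $\Mod_M(\cals)$ — this is where the strict (not merely accessible) preservation of sifted colimits by $U$ enters, via the bar resolution and the description $\cals\simeq\mathrm{P}_\Sigma(\N(\mathcal{F}\mathrm{in}))$ — together with the verification that the canonical equivalences assemble into precisely the functor $R$ of \autoref{monadalg} rather than some other comparison. The rest is a formal application of the universal property of $\mathrm{P}_\Sigma$.
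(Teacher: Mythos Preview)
Your proof is correct and takes a somewhat different route from the paper. Both arguments share the easy direction and the observation that the finite free algebras $F\langle n\rangle$ are compact projective in $\Mod_M(\cals)$. From there, you invoke the recognition principle for $\mathrm{P}_\Sigma$ (essentially \cite[Proposition 5.5.8.25]{HTT}): having checked that the $F\langle n\rangle$ generate $\Mod_M(\cals)$ under sifted colimits and are closed under finite coproducts, you conclude directly that the restricted Yoneda embedding identifies $\Mod_M(\cals)$ with $\mathrm{P}_\Sigma(\Modff_M(\cals))\simeq\Fun^\Pi(\bbT_M,\cals)=\Mod_{\bbT_M}(\cals)$. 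The paper instead argues more by hand: it uses presentability and the adjoint functor theorem to produce a left adjoint $L$ to $R$, notes that $R$ is conservative (since both forgetful functors to $\cals$ are), and then verifies that the counit $LR\to\id$ is an equivalence by reducing---via sifted colimits---to the generators $F\langle n\rangle$ and computing mapping spaces on both sides as $U(A)^n$. Your approach is more conceptual and packages the same ingredients (compact projectivity, sifted generation) into a single appeal to the universal property of $\mathrm{P}_\Sigma$; the paper's approach is more self-contained, trading the black-box recognition theorem for an explicit adjunction-and-counit argument.
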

\begin{proof}
Since the forgetful functor $\Mod_{\bbT_M}(\cals)\to\cals$ preserves sifted colimits {(see \autoref{prop:monadic})}, we see that this is a necessary condition.
Thus, suppose that $U$ preserves sifted colimits; we must show that $R$ is an equivalence.
Note that $\Mod_M(\cals)$ is presentable by \autoref{prop:modpres}, and $\Mod_{\bbT_M}(\cals)$ is presentable as an accessible localization of the presentable $\infty$-category $\Fun(\bbT,\cals)$.
Because $\Algff_M(\cals)\subseteq\Mod_M(\cals)$ is a subcategory of compact projective objects, $R$ preserves sifted colimits, and clearly $R$ also preserves small limits. Thus $R$ admits a left adjoint~$L$.

We now check that the adjunction counit $LR\to\id$ is an equivalence. Since $R$ is conservative, as both the projections down to $\cals$ are conservative, this will also imply that the unit $\id\to RL$ is an equivalence.
Observe that both functors commute with sifted colimits and spaces is freely generated under sifted colimits by the finite sets $\langle n\rangle$, it is enough to check the counit equivalence on objects of the form $F\langle n\rangle$. Now, $RF\langle n\rangle=\widehat{F}\langle n\rangle$, the functor represented by $\widehat{F}\langle n\rangle$, so we must show that we have 
an equivalence $L\widehat{F}\langle n\rangle\to F\langle n\rangle$.
Let $A\in\Mod_M(\cals)$ and consider the map
\[\map(F\langle n\rangle,A)\to\map(L\hat{F}\langle n\rangle,A)\, .\]
The left hand side can be identified with $\map(F\langle n\rangle,A)\simeq U(A)^n.$ 
Similarly, the right hand side is
\[\map(L\widehat{F}\langle n\rangle,A)\simeq\map(\widehat{F}\langle n\rangle,RA)\simeq\map(\widehat{F}\langle 1\rangle,RA)^n\simeq U(A)^n\]
where we used in the last step that~$R$ is compatible with the forgetful functors to~$\cals$.
\end{proof}

Finally, we wish to apply the results of this section to the study of semirings and rings in $\infty$-categories. We begin by showing that semirings and rings are algebraic over spaces.

\begin{prop}
The functors $\Rig_{\bbE_n}(\cals) \to \cals$ and $\Ring_{\bbE_n}(\cals) \to \cals$ are monadic and algebraic over $\cals$.
\end{prop}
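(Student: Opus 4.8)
The plan is to reduce both assertions to Beck's monadicity theorem together with \autoref{algebraicity}, handling the semiring and ring cases in parallel. Recall that $\Rig_{\bbE_n}(\cals)=\Alg_{\bbE_n}(\Mono(\cals))$ and $\Ring_{\bbE_n}(\cals)=\Alg_{\bbE_n}(\Grpo(\cals))$; the only facts I will use about the base are that, by \autoref{thm:tensorproducts}, $\Mon(\cals)$ and $\Grp(\cals)$ carry closed symmetric monoidal structures making them presentable symmetric monoidal $\infty$-categories, and that the forgetful functors $\Mon(\cals)\to\cals$ and $\Grp(\cals)\to\cals$ are conservative and preserve sifted colimits. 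Conservativity of the latter is immediate from the definition of (very) special $\Gamma$-objects, since equivalences are detected on evaluation at $\langle 1\rangle$ (cf.\ \autoref{prop:gpl}); preservation of sifted colimits holds because sifted colimits in $\cals$ commute with finite products, so a pointwise sifted colimit of (very) special $\Gamma$-objects is again (very) special.

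First I would show the forgetful functor $U\colon\Rig_{\bbE_n}(\cals)\to\cals$ is monadic. It factors as $\Alg_{\bbE_n}(\Mono(\cals))\to\Mon(\cals)\to\cals$. The first functor is conservative, preserves all small limits, and preserves sifted colimits, because $\Mono(\cals)$ is a closed --- hence colimit-compatible --- symmetric monoidal presentable $\infty$-category (\cite[\S 3.2]{HA}). Composing with the two properties of $\Mon(\cals)\to\cals$ recorded above, $U$ is conservative, accessible, limit-preserving, and preserves all sifted colimits; in particular it preserves $U$-split geometric realizations. The adjoint functor theorem furnishes a left adjoint $F\colon\cals\to\Rig_{\bbE_n}(\cals)$, and Beck's monadicity theorem \cite[Theorem~6.2.2.5]{HA} then identifies $U$ with the forgetful functor $\Mod_M(\cals)\to\cals$ of the monad $M=U\circ F$. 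Since $U$ preserves filtered colimits and $F$ preserves all colimits, $M$ is accessible.

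With $U$ now exhibited as a monadic functor $\Mod_M(\cals)\to\cals$ that preserves sifted colimits, \autoref{algebraicity} immediately gives that $U$ is algebraic, i.e.\ the comparison functor $\Mod_M(\cals)\to\Mod_{\bbT_M}(\cals)$ to models of the associated theory $\bbT_M$ is an equivalence over $\cals$. The ring case is verbatim the same, with $\Mono(\cals)$ replaced by $\Grpo(\cals)$, $\Mon(\cals)$ by $\Grp(\cals)$, and the special $\Gamma$-object argument replaced by its very special variant.

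I do not expect a genuine obstacle here: the argument is a formal assembly of the structural results on operadic algebras in presentable symmetric monoidal $\infty$-categories from \cite{HA}, together with \autoref{thm:tensorproducts} and \autoref{algebraicity}. The one point that warrants a moment's care is the verification that $\Mon(\cals)\to\cals$ and $\Grp(\cals)\to\cals$ preserve sifted colimits, which rests entirely on the commutation of sifted colimits with finite products in $\cals$.
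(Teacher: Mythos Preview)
Your argument is correct and follows essentially the same route as the paper: factor the forgetful functor through $\Mon(\cals)$ (resp.\ $\Grp(\cals)$), verify conservativity and preservation of sifted colimits for each factor, apply Barr--Beck, and then invoke \autoref{algebraicity}. The one noteworthy difference is in the step showing that $\Grp(\cals)\to\cals$ preserves sifted colimits: the paper argues via $\pi_0$ and the grouplike condition, reducing to a classical fact about sifted colimits of groups in monoids (citing an external reference), whereas your argument---that the very-special condition is closed under pointwise sifted colimits because sifted colimits commute with finite products in $\cals$---is more direct and self-contained.
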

\begin{proof}
We claim that the functors $\Rig_{\bbE_n}(\cals) \to \Mon(\cals) \to \cals$ and $\Ring_{\bbE_n}(\cals)\to\Grp(\cals) \to\cals$ all preserve sifted colimits and reflect equivalences. Then the monadicity follows from the Barr-Beck theorem \cite[Theorem 6.2.2.5]{HA}, and the algebraicity from \autoref{algebraicity}.

To see that this claim is true note that three of the four functors, namely $\Rig_{\bbE_n}(\cals) \to \Mon(\cals),$ $\Mon(\cals)\to\cals,$ and $\Ring_{\bbE_n}(\cals)\to \Grp(\cals)$, are forgetful functors from $\infty$-categories of algebras over an $\infty$-operad. These forgetful functors are always conservative and for suitable monoidal structures they also preserve sifted colimits \cite[Proposition~3.2.3.1]{HA}. Thus we only have to establish the same properties for $\Grp(\cals) \to \cals$. It is easy to see that this functor is conservative since $\Grp(\cals)$ is a full subcategory of $\Mon(\cals)$ and the given functor factors over the conservative functor $\Mon(\cals)\to\cals.$ It remains to show that $\Grp(\cals) \to \Mon(\cals)$ preserves sifted colimits. But for an $\bbE_\infty$-monoid in the $\infty$-category of spaces being a group object is equivalent to being grouplike. Thus, via the left adjoint functor $\pi_0$ it reduces to the statement that the sifted colimit of groups formed in the 
category of monoids is again a group. And this result is a special case of \cite[Proposition~9.3]{ARV}.
\end{proof}

\begin{defn}
We denote the algebraic theory corresponding to the functor $\Rig_{\bbE_n}(\cals) \to \cals$ by $\bbT_{{\bbE_n}\text{-}\Rig}$ and call it the \emph{theory of $\bbE_n$-semirings}. Accordingly we denote the
algebraic theory corresponding to the functor $\Ring_{\bbE_n}(\cals) \to \cals$ by $\bbT_{{\bbE_n}\text{-}\Ring}$ and call it the \emph{theory of $\bbE_n$-rings}.
\end{defn}

\begin{prop}
Let $\calc$ be a cartesian closed, presentable $\infty$-category. Then we have equivalences
\begin{equation*}
 \Rig_{\bbE_n}(\calc) \simeq \Alg_{\bbT_{{\bbE_n}\text{-}\Rig}}(\calc) \qquad \text{and} \qquad \Ring_{\bbE_n}(\calc) \simeq \Alg_{\bbT_{{\bbE_n}\text{-}\Ring}}(\calc).
\end{equation*}
\end{prop}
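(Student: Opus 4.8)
The plan is to reduce everything to the case $\calc=\cals$, where the asserted equivalence is essentially built into the definition of the theories $\bbT_{{\bbE_n}\text{-}\Rig}$ and $\bbT_{{\bbE_n}\text{-}\Ring}$, and then to propagate it to an arbitrary cartesian closed presentable $\calc$ by base change along the endofunctor $\calc\otimes(-)$ of $\Prl$.

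First I would treat the space-level case. By construction, $\bbT_{{\bbE_n}\text{-}\Rig}$ is the theory $\bbT_M$ associated to the monad $M=U\circ F$ of the forgetful functor $U\colon\Rig_{\bbE_n}(\cals)\to\cals$, where $F$ is its left adjoint. The preceding proposition shows that $U$ is monadic, so that $\Rig_{\bbE_n}(\cals)\simeq\Mod_M(\cals)$ over $\cals$, and that $U$ preserves sifted colimits, so that by \autoref{algebraicity} it is algebraic in the sense of \autoref{monadalg}; that is, the comparison functor $R\colon\Mod_M(\cals)\to\Mod_{\bbT_{{\bbE_n}\text{-}\Rig}}(\cals)$ is an equivalence over $\cals$. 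Composing the two equivalences gives an equivalence of presentable $\infty$-categories over $\cals$
\[
\Rig_{\bbE_n}(\cals)\;\simeq\;\Mod_{\bbT_{{\bbE_n}\text{-}\Rig}}(\cals)\;=\;\Alg_{\bbT_{{\bbE_n}\text{-}\Rig}}(\cals),
\]
and the identical argument, applied to $U\colon\Ring_{\bbE_n}(\cals)\to\cals$, yields $\Ring_{\bbE_n}(\cals)\simeq\Alg_{\bbT_{{\bbE_n}\text{-}\Ring}}(\cals)$.

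Next I would apply the colimit-preserving functor $\calc\otimes(-)\colon\Prl\to\Prl$ to these equivalences and combine the result with the two base-change statements already established: \autoref{prop_rig_ten}, which gives $\Rig_{\bbE_n}(\calc)\simeq\calc\otimes\Rig_{\bbE_n}(\cals)$, and \autoref{prop_ten_al}, which gives $\calc\otimes\Mod_\bbT(\cals)\simeq\Mod_\bbT(\calc)$ for any theory $\bbT$. Taking $\bbT=\bbT_{{\bbE_n}\text{-}\Rig}$ we obtain the chain
\[
\Rig_{\bbE_n}(\calc)\;\simeq\;\calc\otimes\Rig_{\bbE_n}(\cals)\;\simeq\;\calc\otimes\Alg_{\bbT_{{\bbE_n}\text{-}\Rig}}(\cals)\;\simeq\;\Alg_{\bbT_{{\bbE_n}\text{-}\Rig}}(\calc),
\]
and, in exactly the same fashion, $\Ring_{\bbE_n}(\calc)\simeq\Alg_{\bbT_{{\bbE_n}\text{-}\Ring}}(\calc)$.

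The one point that needs care — and the only real obstacle — is that the space-level equivalence has to be an equivalence in $\Prl$, equivalently an equivalence over $\cals$ via the respective forgetful functors, since it is only in that form that it can be pushed through $\calc\otimes(-)$ and matched against \autoref{prop_rig_ten} and \autoref{prop_ten_al}. This is exactly the compatibility built into \autoref{monadalg}, so nothing new is needed; one should, however, also note that $\Rig_{\bbE_n}(\cals)$ really is an object of $\Prl$, being $\Alg_{\bbE_n}$ of the presentable symmetric monoidal $\infty$-category $\Mono(\cals)$, so that the tensor $\calc\otimes(-)$ applies to it. Apart from these bookkeeping remarks, all of the substance sits in the algebraicity result of the previous proposition, the criterion \autoref{algebraicity}, and the base-change propositions \autoref{prop_rig_ten} and \autoref{prop_ten_al}.
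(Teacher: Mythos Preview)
Your proposal is correct and follows exactly the paper's approach: settle the case $\calc=\cals$ first, then propagate via the base-change equivalences of \autoref{prop_rig_ten} and \autoref{prop_ten_al}. The paper compresses the space-level step into the phrase ``true by definition,'' whereas you (correctly) unpack that this definition rests on the preceding proposition together with \autoref{algebraicity}; your additional remarks about presentability and compatibility over $\cals$ are sound but not strictly needed, since any equivalence of $\infty$-categories is automatically a morphism in $\Prl$ and hence can be tensored with $\calc$.
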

\begin{proof}
For $\calc = \cals$ the $\infty$-category of spaces the statement is true by definition of $\bbT_{{\bbE_n}\text{-}\Rig}$ and $\bbT_{{\bbE_n}\text{-}\Ring}$. The general case follows from the base change formulas given in \autoref{prop_rig_ten} and \autoref{prop_ten_al}.
\end{proof}

\begin{rmk}
\begin{enumerate}
\item Theories of $\bbE_\infty$-semirings and rings have also been constructed in \cite{cranch} by the use of spans and distributive laws. These two approaches do agree.
\item The theory approach of semirings and rings gives a way of defining ring objects in a much broader generality. One only needs an $\infty$-category $\calc$ with finite products. In this way we can drop the assumption that
$\calc$ is presentable and cartesian closed. However in this case semiring and ring objects do not admit a nice description in terms of a tensor product on monoids. It is also impossible to apply 
this to different tensor products than the cartesian one.
\item We showed in \autoref{cor:lax} that an accessible, product preserving functor $F\colon \calc \to \cald$ between cartesian closed symmetric monoidal categories extends to a lax symmetric monoidal functor
 $\Mon(\calc) \to \Mon(\cald)$. This means that $F$ extends to functors $\Rig_{\bbE_n}(\calc) \to \Rig_{\bbE_n}(\cald)$ and $\Ring_{\bbE_n}(\calc)\to\Ring_{\bbE_n}(\cald)$. Therefore we may drop the assumption that $F$ is accessible and conclude that any 
product preserving functor $\calc \to \cald$ extends to functors $\Rig_{\bbE_n}(\calc) \to \Rig_{\bbE_n}(\cald)$ and $\Ring_{\bbE_n}(\calc)\to\Ring_{\bbE_n}(\cald)$.
\end{enumerate}
 \end{rmk}

\bibliographystyle{alpha}
\bibliography{add_final}

\begin{thebibliography}{EKMM97}

\bibitem[ARV11]{ARV}
J.~Ad{\'a}mek, J.~Rosick{\'y}, and E.M. Vitale.
\newblock {\em Algebraic Theories}, volume 184 of {\em Cambridge Tracts In
  Mathematics}.
\newblock Cambridge University Press, Cambridge, 2011.

\bibitem[Bar13]{Bar}
C.~Barwick.
\newblock Multiplicative structures on algebraic {K}-theory.
\newblock \url{http://arxiv.org/abs/1304.4867}, 2013.
\newblock Preprint.

\bibitem[BDRR13]{BDRR}
N.A. Baas, B.I. Dundas, B.~Richter, and J.~Rognes.
\newblock Ring completion of rig categories.
\newblock {\em J. reine. angew. Math.}, 674:43--80, 2013.

\bibitem[Ber09]{Bergsaker}
H.S. Bergsaker.
\newblock Homotopy theory of presheaves of {$\Gamma$}-spaces.
\newblock {\em Homology, Homotopy Appl.}, 11(1):35--60, 2009.

\bibitem[BGT11]{BGT2}
A.~Blumberg, D.~Gepner, and G.~Tabuada.
\newblock Uniqueness of the multiplicative cyclotomic trace.
\newblock \url{http://arxiv.org/abs/1103.3923}, 2011.
\newblock Preprint.

\bibitem[BGT13a]{BGT}
A.~Blumberg, D.~Gepner, and G.~Tabuada.
\newblock {K}-theory of endomorphisms via noncommutative motives.
\newblock \url{http://arxiv.org/abs/1302.1214}, 2013.
\newblock Preprint.

\bibitem[BGT13b]{BGT1}
A.~Blumberg, D.~Gepner, and G.~Tabuada.
\newblock A universal characterization of higher algebraic {K}-theory.
\newblock {\em Geom. Topol.}, 17(2):733--838, 2013.

\bibitem[BV73]{BV}
J.M. Boardman and R.M. Vogt.
\newblock {\em Homotopy invariant algebraic structures}, volume 347 of {\em
  Lecture Notes Math.}
\newblock Springer, Berlin, 1973.

\bibitem[Cra10]{cranch}
J.~Cranch.
\newblock Algebraic theories and (infinity,1)-categories.
\newblock \url{http://arxiv.org/abs/1011.3243}, 2010.
\newblock Preprint.

\bibitem[Cra11]{cranch2}
J.~Cranch.
\newblock Algebraic theories, span diagrams, and commutative monoids in
  homotopy theory.
\newblock \url{http://arxiv.org/abs/1109.1598}, 2011.
\newblock Preprint.

\bibitem[EKMM97]{EKMM}
A.D. Elmendorf, I.~Kriz, M.A. Mandell, and J.P. May.
\newblock {\em Rings, modules, and algebras in stable homotopy theory},
  volume~47 of {\em Mathematical Surveys and Monographs}.
\newblock American Mathematical Society, Providence, RI, 1997.
\newblock With an appendix by M. Cole.

\bibitem[EM06]{EM06}
A.D. Elmendorf and M.A. Mandell.
\newblock Rings, modules, and algebras in infinite loop space theory.
\newblock {\em Adv. Math.}, 205(1):163--228, 2006.

\bibitem[EM09]{EM09}
A.D. Elmendorf and M.A. Mandell.
\newblock Permutative categories, multicategories and algebraic {$K$}-theory.
\newblock {\em Algebr. Geom. Topol.}, 9(4):2391--2441, 2009.

\bibitem[HP02]{Power}
M.~Hyland and J.~Power.
\newblock Pseudo-commutative monads and pseudo-closed 2-categories.
\newblock {\em J. Pure Appl. Algebra}, 175(1-3):141--185, 2002.
\newblock Special volume celebrating the 70th birthday of Professor Max Kelly.

\bibitem[Joy08]{Joyal}
A.~Joyal.
\newblock Notes on quasi-categories.
\newblock \url{http://ivanych.net/doc/NotesOnQuasicategories_Joyal.pdf}, 2008.
\newblock Preprint.

\bibitem[Lap72a]{Laplaza2}
M.L. Laplaza.
\newblock Coherence for distributivity.
\newblock In {\em Coherence in categories}, pages 29--65. Lecture Notes in
  Math., Vol. 281. Springer, Berlin, 1972.

\bibitem[Lap72b]{Laplaza1}
M.L. Laplaza.
\newblock A new result of coherence for distributivity.
\newblock In {\em Coherence in categories}, pages 214--235. Lecture Notes in
  Math., Vol. 281. Springer, Berlin, 1972.

\bibitem[Law63]{Lawvere}
F.W. Lawvere.
\newblock Functorial semantics of algebraic theories.
\newblock {\em Proc. Nat. Acad. Sci. U.S.A.}, 50:869--872, 1963.

\bibitem[Lur09]{HTT}
J.~Lurie.
\newblock {\em Higher topos theory}, volume 170 of {\em Annals of Mathematics
  Studies}.
\newblock Princeton University Press, Princeton, NJ, 2009.

\bibitem[Lur11]{HA}
J.~Lurie.
\newblock Higher algebra.
\newblock \url{http://www.math.harvard.edu/~lurie/}, 2011.
\newblock Preprint.

\bibitem[May72]{Operad}
J.P. May.
\newblock {\em The geometry of iterated loop spaces}, volume 271 of {\em
  Lecture Notes Math.}
\newblock Springer, Berlin, 1972.

\bibitem[May82]{May82}
J.P. May.
\newblock Multiplicative infinite loop space theory.
\newblock {\em J. Pure Appl. Alg.}, 26:1--60, 1982.

\bibitem[May09a]{May_WhatII}
J.P. May.
\newblock The construction of {$E_\infty$} ring spaces from bipermutative
  categories.
\newblock In {\em New topological contexts for {G}alois theory and algebraic
  geometry ({BIRS} 2008)}, volume~16 of {\em Geom. Topol. Monogr.}, pages
  283--330. Geom. Topol. Publ., Coventry, 2009.

\bibitem[May09b]{May_WhatIII}
J.P. May.
\newblock What are {$E_\infty$} ring spaces good for?
\newblock In {\em New topological contexts for {G}alois theory and algebraic
  geometry ({BIRS} 2008)}, volume~16 of {\em Geom. Topol. Monogr.}, pages
  331--365. Geom. Topol. Publ., Coventry, 2009.

\bibitem[May09c]{May_WhatI}
J.P. May.
\newblock What precisely are {$E_\infty$} ring spaces and {$E_\infty$} ring
  spectra?
\newblock In {\em New topological contexts for {G}alois theory and algebraic
  geometry ({BIRS} 2008)}, volume~16 of {\em Geom. Topol. Monogr.}, pages
  215--282. Geom. Topol. Publ., Coventry, 2009.

\bibitem[Sch99]{Schwede}
S.~Schwede.
\newblock Stable homotopical algebra and {$\Gamma$}-spaces.
\newblock {\em Math. Proc. Cambridge Philos. Soc.}, 126(2):329--356, 1999.

\bibitem[Sch07]{schmitt2007tensor}
V.~Schmitt.
\newblock Tensor product for symmetric monoidal categories.
\newblock \url{http://arxiv.org/abs/0711.0324}, 2007.
\newblock Preprint.

\bibitem[Seg74]{segal_categories}
G.~Segal.
\newblock Categories and cohomology theories.
\newblock {\em Topology}, 13:293--312, 1974.

\bibitem[Tho80]{phony}
R.W. Thomason.
\newblock Beware the phony multiplication on {Q}uillen's
  {${\mathcal{A}}^{-1}{\mathcal{A}}$}.
\newblock {\em Proc. Amer. Math. Soc.}, 80(4):569--573, 1980.

\bibitem[TV09]{TV}
B.~T\"oen and G.~Vezzosi.
\newblock Caract{\`e}res de {C}hern, traces {\'e}quivariantes et
  g{\'e}om{\'e}trie alg{\'e}brique d{\'e}riv{\'e}e.
\newblock \url{http://arxiv.org/abs/0903.3292}, 2009.
\newblock Preprint.

\end{thebibliography}

\end{document}